
\documentclass{amsart}
\usepackage{amsmath}
\usepackage{amssymb}
\usepackage{amsfonts}
\usepackage{wasysym}
\usepackage{pxfonts}

\setlength{\textwidth}{148.67truemm} \linespread{1.05}
\setlength{\textheight}{200.0truemm}
\setlength{\evensidemargin}{4.5truemm}
\setlength{\oddsidemargin}{4.5truemm} \setlength{\topmargin}{0pt}

\input txdtools


\newtheorem{thm}{Theorem}[section]

\newtheorem{lem}[thm]{Lemma}
\newtheorem{prop}[thm]{Proposition}
\theoremstyle{definition}
\newtheorem{defn}[thm]{Definition}

\theoremstyle{remark}
\newtheorem{rem}[thm]{Remark}
\newtheorem{fact}[thm]{Fact}

\numberwithin{equation}{section}
\numberwithin{figure}{section}


\renewcommand{\d}{{\partial}}
\newcommand{\dbar}{\overline{\partial}}
\newcommand{\e}{\mathrm e}

\newcommand{\C}{{\mathbb C}}
\newcommand{\D}{{\mathbb D}}
\newcommand{\T}{{\mathbb T}}
\newcommand{\R}{{\mathbb R}}

\newcommand{\1}{{\mathbf 1}}

\newcommand{\wt}{\widetilde}
\newcommand{\wh}{\widehat}

\newcommand{\dA}{{\diff A}}

\newcommand{\re}{\operatorname{Re}}
\newcommand{\im}{\operatorname{Im}}

\newcommand{\diff}{{\mathrm d}}
\newcommand{\imag}{{\mathrm i}}

\newcommand{\dist}{\operatorname{dist}}

\newcommand{\eps}{\varepsilon}

\newcommand{\trace}{\operatorname{trace}}

\newcommand{\bfK}{\mathbf{K}}

\newcommand{\erfc}{\operatorname{erfc}}

\def\lpar{\left (}
\def\rpar{\right )}
\def\labs{\left |}
\def\rabs{\right |}
\def\babs#1{\labs {#1} \rabs}

\begin{document}

\title[Beurling--Landau densities of weighted Fekete sets]{Beurling--Landau densities of weighted Fekete sets and correlation kernel estimates}
\subjclass[2010]{31C20; 82B20; 30E05; 94A20}
\keywords{Weighted Fekete set; droplet; equidistribution; concentration operator; correlation kernel}

\author{Yacin Ameur}

\address{Yacin Ameur\\ Department of Mathematics\\
Lule{\aa} University of Technology\\
971 87 Lule{\aa}\\
Sweden}

\email{yacin.ameur@gmail.com}

\author{Joaquim Ortega-Cerd\`{a}}

\address{Joaquim Ortega-Cerd\`{a}\\ Departament de Matemàtica Aplicada i An\`{a}lisi\\ Universitat de Barcelona\\ Gran Via 585\\ 08007 Barcelona\\ Spain}

\email{jortega@ub.edu}

\thanks{This work is a contribution to the research program on "Complex Analysis and Spectral Problems'' which was conducted at the CRM in Barcelona during
the spring semester of 2011. The first author was supported by grants from Magnussons fond, Vetenskapsr{\aa}det, SveFum, and the European Science Foundation. The second author was supported by grants
MTM2008-05561-C02-01 and 2009 SGR 1303.}

\begin{abstract} Let $Q$ be a suitable real valued function on $\C$ which increases sufficiently rapidly as $z\to\infty$.
An $n$-Fekete set corresponding to $Q$ is a subset $\{z_{n1},\ldots,z_{nn}\}$ of $\C$ which maximizes the weighted Vandermonde determinant
$\prod_{i<j}^n\babs{z_{ni}-z_{nj}}^2\e^{-n(Q(z_{n1})+\cdots+Q(z_{nn}))}$. It is
well known that there exists a
compact set $S$ known as the "droplet'' such that the sequence of measures
$\mu_n=n^{-1}(\delta_{z_{n1}}+\cdots+\delta_{z_{nn}})$
converges to the equilibrium measure $\Delta Q\cdot \1_S~\dA$ as $n\to \infty$. In this note we consider a related topic, proving that Fekete sets are in a sense maximally spread out with respect to the equilibrium measure. In general, our results apply only to a part of the Fekete set, which is at a certain distance away from the boundary of the droplet. However, for the Ginibre potential $Q=\babs{z}^2$ we obtain results which hold globally; we conjecture that such global results are true for a wide range of potentials.
\end{abstract}

\maketitle

In this paper we discuss equidistribution results for weighted Fekete sets in subsets of the plane.
More precisely, we show that
 Fekete sets are maximally spread out relative to a rescaled version of the Beurling--Landau density, in the "droplet" corresponding to the given weight.  Our method combines Landau's idea to relate the density of a family of discrete sets
 to properties of the spectrum of the concentration operator, with estimates for the correlation kernel of the corresponding random normal matrix ensemble.

\section{Fekete sets}

\subsection{Potentials and droplets} We recapture some notions and results from weighted potential theory. Proofs
and further results can be found in \cite{ST}. Cf. also \cite{AHM} and \cite{HM2} where the setting is more tuned to fit the present discussion.

Let $Q:\C\to \R\cup\{+\infty\}$ be a suitable function (the "potential'' or
"external field'') satisfying
\begin{equation*}\liminf_{z\to\infty}\frac {Q(z)}{\log\babs{z}^2}=+\infty.\end{equation*}
(In detail: we require in addition that the function $w:=\e^{-Q/2}$ satisfy the mild condition of being an "admissible weight'' in the
sense of \cite{ST}, p. 26. This means that $w$ is upper semi-continuous and the set $\{w>0\}$ has positive logarithmic capacity.)

We associate to $Q$ the "equilibrium potential'' $\wh{Q}$ in the following way: Let $\text{SH}_Q$ be the set of all subharmonic functions $u:\C\to\R$ such that $u(z)\le \log_+ \babs{z}^2+\text{const.}$ and $u\le Q$ on $\C$.
One defines the equilibrium potential as
$\wh{Q}(z)=\sup\{u(z)~;~u\in\text{SH}_Q\}.$
The droplet associated to $Q$ is the set
$$S=\{z\in\C;\, Q(z)=\wh{Q}(z)\}.$$
This is a compact set; one has that $\Delta Q\ge 0$ on $S$ and that the \textit{equilibrium measure}
\begin{equation}\label{equi}\diff \sigma(z)=\1_S(z) \Delta Q(z) \diff A(z)\end{equation}
is a probability measure on $\C$. Here we agree
that $\diff A$ is normalized area measure
$\diff A=\frac  1 \pi {\diff x\diff y} ,$
while $\Delta =\d\dbar=\frac 1 4 (\d^2/\d x^2+\d^2/\d y^2)$ is the normalized Laplacian; $\d=\frac 1 2 (\d/\d x-\imag \d/\d y)$
and $\dbar=\frac 1 2 (\d/\d x+\imag \d/\d y)$ are the complex derivatives.

We will make the standing assumption that $Q$ be $C^3$-smooth and strictly
subharmonic in some neighbourhood $\Lambda$ of $S$.
In other words, we assume that
the \textit{conformal metric}
$\diff s^2(z)=\Delta Q(z)\babs{\diff z}^2$ is comparable to the Euclidean
metric on $\Lambda$.

\subsection{Fekete sets} Consider the weighted Vandermonde determinant
\begin{equation*}V_n(z_1,\ldots,z_n)=\prod_{i<j}\babs{z_i-z_j}^2\e^{
-n(Q(z_1)+\cdots+Q(z_n))},\quad z_1,\ldots, z_n\in \C.\end{equation*}
A set $\mathcal{F}_n=\{z_{n1},\ldots,z_{nn}\}$ which maximizes $V_n$ is called an \textit{$n$-Fekete set} corresponding to $Q$. Notice that Fekete sets are not unique.

Equivalently, the set $\mathcal{F}_n$ minimizes the weighted energy
\begin{equation}\label{ham}H_n(z_1,\ldots,z_n)=\sum_{i\ne j}\log
\babs{z_i-z_j}^{-1}+n\sum_{j=1}^n Q(z_j)\end{equation}
over all configurations $\{z_j\}_{j=1}^n\subset\C$.
If we think of the points $z_j$ as giving locations for $n$ identical repelling
point charges with total charge $1$ confined to $\C$ under the influence of the
external magnetic field $nQ$, then $H_n$ can be regarded as the the energy of the
system.

The following classical result displays some fundamental and well-known properties of Fekete sets.

\begin{thm} \label{eqcon} For any Fekete set $\mathcal{F}_n=\{z_{n1},\ldots,z_{nn}\}$ holds:
\begin{enumerate}
\item $\mathcal{F}_n\subset S$
\item Let $\sigma$ be the equilibrium measure \eqref{equi}.
We then have convergence in the sense of measures
$$\frac 1 n \sum_{j=1}^n \delta_{z_{nj}}\to \sigma,\quad \text{as}\quad n\to\infty.$$
\end{enumerate}
\end{thm}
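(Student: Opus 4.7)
My plan treats parts (1) and (2) separately but shares a common toolkit: the equilibrium potential $\hat Q$ for (1), and the weighted logarithmic energy for (2), with both parts relying on the maximum principle and the classical two-sided energy bracketing.

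For part (1), I would fix an index $k$ and exploit the one-variable maximization: since $\mathcal{F}_n$ maximizes $V_n$, the point $z_{nk}$ is a global maximizer of $f(z) := 2\sum_{j\ne k}\log\babs{z-z_{nj}} - nQ(z)$. Replacing $Q$ by $\hat Q$ gives $\psi(z) := 2\sum_{j\ne k}\log\babs{z-z_{nj}} - n\hat Q(z)$, and the relation $\hat Q\le Q$ with equality on $S$ yields $\psi\ge f$ with equality on $S$. Since $\hat Q$ is harmonic on $\C\setminus S$ (standard regularity of the extremal envelope), $\psi$ is subharmonic off $S\cup\{z_{nj}\}_{j\ne k}$; the growth hypothesis on $Q$ combined with the standard estimate $\hat Q(z) = \log\babs{z}^2+O(1)$ at infinity forces $\psi(z)\to -\infty$ as $\babs z\to\infty$. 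The maximum principle then gives $\max_{\C}\psi = \max_{\bar S}\psi = \max_{\bar S} f$, whence $\max_\C f = \max_S f$ and therefore $z_{nk}\in S$.

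For part (2), by (1) the empirical measures $\mu_n$ are supported on the compact set $S$, so every subsequence has a further weakly convergent sub-subsequence; let $\mu$ denote any such limit. I would bracket the normalized minimum energy $H_n(\mathcal{F}_n)/n^2$ against the weighted logarithmic energy $I[\nu] := \iint \log\babs{z-w}^{-1}\,\diff\nu(z)\diff\nu(w) + \int Q\,\diff\nu$, which by classical weighted potential theory is uniquely minimized among Borel probability measures at $\sigma$. For the upper bound, build a test configuration $\xi_n\subset S$ of $n$ points by sampling $\sigma$ on a partition of $S$ into cells of $\sigma$-mass $1/n$; a direct check gives $H_n(\xi_n) = n^2 I[\sigma] + o(n^2)$, and Fekete minimality transfers the bound to $H_n(\mathcal{F}_n)$. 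For the matching lower bound, truncate the logarithmic kernel at level $M$, use the weak convergence $\mu_n\to\mu$ on the resulting continuous integrand, and let $M\to\infty$ by monotone convergence to obtain $I[\mu]\le\liminf H_n(\mathcal{F}_n)/n^2$. Combining, $I[\mu]\le I[\sigma]$, so by uniqueness $\mu=\sigma$, and since every subsequential limit equals $\sigma$, the full sequence converges.

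The main obstacle will be the quantitative upper bound $H_n(\xi_n) = n^2 I[\sigma] + o(n^2)$. One has to distribute the $n$ sample points so that pairwise separations respect the local scale $n^{-1/2}$ prescribed by the conformal metric $\Delta Q\,\babs{\diff z}^2$, matching the off-diagonal double sum $\sum_{i\ne j}\log\babs{w_i-w_j}^{-1}$ to $n^2\iint \log\babs{z-w}^{-1}\,\diff\sigma(z)\diff\sigma(w)$ up to an $o(n^2)$ error, with the near-diagonal terms absorbed using the boundedness of the density $\Delta Q$ of $\sigma$ on $S$. This step is routine given our $C^3$-smoothness and strict subharmonicity of $Q$ near $S$, but is the only part requiring real work; both the maximum-principle argument in (1) and the lower-semicontinuity step in (2) are textbook weighted potential theory (see \cite{ST}).
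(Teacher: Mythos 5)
Your outline is correct, and it is essentially the classical argument: the paper itself gives no proof of Theorem \ref{eqcon} but defers to \cite{ST}, Theorems III.1.2--III.1.3, and both halves of your plan (the weighted maximum principle for part (1) --- the same mechanism the paper records as Lemma \ref{BAHM} --- and the energy bracketing for part (2)) are the route taken there. Two remarks. First, in part (1) the final inference ``$\max_\C f=\max_S f$ and therefore $z_{nk}\in S$'' is, as written, a non sequitur: equality of the maxima does not by itself exclude a maximizer off $S$. The fix is one line and is already latent in your setup: since $S$ is \emph{defined} as the coincidence set $\{Q=\wh{Q}\}$, one has $\wh{Q}<Q$ strictly off $S$, so if $z_{nk}\notin S$ then $\psi(z_{nk})>f(z_{nk})=\max_\C f=\max_S\psi$, contradicting the maximum principle bound $\sup_{\C\setminus S}\psi\le\max_{\partial S}\psi$ (which uses $\psi\to-\infty$ at infinity, itself needing only $\wh{Q}(z)=\log\babs{z}^2+O(1)$, not the growth of $Q$). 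Second, the step you flag as ``the only part requiring real work'' --- the test configuration with $H_n(\xi_n)\le n^2 I[\sigma]+o(n^2)$ --- can be dispatched without any separation bookkeeping: average $\log V_n$ against the product measure $\sigma^{\otimes n}$ (equivalently, take the points i.i.d.\ from $\sigma$, or independent in your cells of mass $1/n$), which gives $\max_\C\log V_n\ge\int\log V_n\,\diff\sigma^{\otimes n}=-n(n-1)\iint\log\babs{z-w}^{-1}\diff\sigma\,\diff\sigma-n^2\int Q\,\diff\sigma$, i.e.\ $H_n(\mathcal{F}_n)\le n^2I[\sigma]+O(n)$, using only that $\sigma$ has finite logarithmic energy (its density $\Delta Q$ is bounded on the compact set $S$); this averaging trick is exactly how \cite{ST} obtains the upper bound. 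With these two adjustments your proof is complete and matches the cited one.
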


A proof can be found in \cite{ST}, theorems III.1.2 and III.1.3. (Notice that our assumptions on $Q$ imply that
$S=S^*$ in the notation of \cite{ST}.)  The theorem \ref{eqcon} was generalized to line bundles over complex manifolds in \cite{BB}, \cite{BBW}.

We remark that the property (1) is essential to the analysis in this paper, and that the standard proof of (1) (e.g. in \cite{ST}) depends on the "maximum principle for weighted polynomials'', which is reproduced in Lemma \ref{BAHM} below.

We will consider related questions concerning the distribution of Fekete points. In a sense, we will prove that these points are maximally spread out with respect to the conformal metric. To quantify this assertion, we introduce some definitions.

\begin{defn} Let $\mathcal{F}=\{\mathcal{F}_n\}_{n=1}^\infty$ be a family of $n$-Fekete sets. Also let
$\zeta=(z_n)_1^\infty$ be a sequence of points in $S$.
We define the \textit{lower Beurling--Landau's density}
of $\mathcal{F}$ with respect to $\zeta$ by
$$D^-\lpar \mathcal{F};\zeta\rpar=\liminf_{R\to\infty}\liminf_{n\to\infty}\frac {\#\lpar \mathcal{F}_n\cap D\lpar z_n;R/\sqrt{n}\rpar\rpar}
{R^2\Delta Q(z_n)},$$
and we define the corresponding upper density by
$$D^+\lpar \mathcal{F};\zeta\rpar=\limsup_{R\to\infty}\limsup_{n\to\infty}\frac {\#\lpar \mathcal{F}_n\cap D\lpar z_n;R/\sqrt{n}\rpar\rpar}
{R^2\Delta Q(z_n)}.$$
We also put
\begin{equation*}d_n(\zeta)=\dist(z_n,\C\setminus S).\end{equation*}
Here "dist'' denotes the Euclidean distance in the plane, and $D(\zeta;r)$ is
the open disk with center $\zeta$
and radius $r$.
\end{defn}

We have the following theorem.

\begin{thm}\label{MTH1} Put
$\delta_n=\log^2 n/\sqrt{n},$ and
suppose that
$d_n(\zeta)\ge 3\delta_n$
for all $n$. Then
\begin{equation}\label{ludens}D^-\lpar \mathcal{F};\zeta\rpar=D^+\lpar \mathcal{F};\zeta\rpar=1.\end{equation}
\end{thm}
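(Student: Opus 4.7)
The plan is to combine precise asymptotics of the correlation kernel of the random normal matrix ensemble associated with $Q$ with a Landau-type concentration argument tailored to weighted polynomial spaces. Let $\Pi_n$ denote the orthogonal projection onto holomorphic polynomials of degree $<n$ inside $L^2(\C,\e^{-nQ}\dA)$, with reproducing kernel $K_n(z,w)$, and for $D=D(z_n;R/\sqrt n)$ form the concentration operator $T_D=M_{\1_D}\Pi_n M_{\1_D}$. Its trace equals $\int_D K_n(z,z)\e^{-nQ(z)}\dA(z)$, and the whole argument is driven by fine estimates of $K_n$ near and away from the diagonal.

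The first and main task is to establish, uniformly on the bulk region $\{z\in S:\dist(z,\d S)\ge\delta_n\}$, the diagonal asymptotic $K_n(z,z)\e^{-nQ(z)}=n\Delta Q(z)(1+o(1))$ together with the Gaussian off-diagonal decay $|K_n(z,w)|^2\e^{-n(Q(z)+Q(w))}\lesssim n^2\Delta Q(z)\Delta Q(w)\e^{-cn|z-w|^2}$. The diagonal bound gives $\Tr(T_D)=R^2\Delta Q(z_n)(1+o(1))$, and a computation of $\Tr(T_D-T_D^2)$ using the reproducing identity together with the Gaussian bound shows that this "plunge" trace is $O(R)$, so at most $O(R)$ eigenvalues of $T_D$ lie in any fixed transition band $(\eps,1-\eps)$. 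This step is the principal obstacle; the off-diagonal estimate up to distance $\delta_n$ of $\d S$ is delicate and should be obtained by a H\"ormander $\dbar$-correction applied to an explicit local quasi-reproducing kernel modelled on $Q$. The hypothesis $d_n(\zeta)\ge 3\delta_n$ is precisely what is needed to place a full $\delta_n$-neighbourhood of $D$ safely inside this bulk region, since one must take $R/\sqrt n$ at most of order $\delta_n$ for the estimates to apply.

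The second ingredient converts the Fekete property into an almost-Riesz-basis statement. The Lagrange polynomials $L_k(z)=\prod_{j\ne k}(z-z_{nj})/\prod_{j\ne k}(z_{nk}-z_{nj})$ combined with the weighted maximum principle (Lemma~\ref{BAHM}) yield $|L_k(z)|^2\e^{-n(Q(z)-Q(z_{nk}))}\le 1$, so the peak functions $g_k(z)=L_k(z)\e^{-n(Q(z)-Q(z_{nk}))/2}$ are bounded by $1$ and satisfy $g_k(z_{nj})=\delta_{jk}$. Combining this with the Gaussian off-diagonal bound on $K_n$ forces the normalized reproducing kernels $K_n(\cdot,z_{nk})/\sqrt{K_n(z_{nk},z_{nk})}$ indexed by Fekete points in $D$ to form a nearly orthonormal family inside $\Pi_n L^2$; equivalently, the Fekete points in the bulk act simultaneously as sampling and interpolating sequences for $\Pi_n L^2$ with constants tending to $1$.

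A standard Landau sandwich then compares $\#(\mathcal F_n\cap D)$ with the spectrum of $T_D$: the sampling half bounds the count below by the number of eigenvalues of $T_D$ exceeding $1-\eps$, while the interpolation half bounds it above by the number of eigenvalues exceeding $\eps$. By the trace analysis from Step~1, both counts equal $R^2\Delta Q(z_n)(1+o(1))+O(R)$, yielding
\[
\#(\mathcal F_n\cap D(z_n;R/\sqrt n))=R^2\Delta Q(z_n)(1+o_n(1))+O(R).
\]
Dividing by $R^2\Delta Q(z_n)$, sending $n\to\infty$ and then $R\to\infty$, produces \eqref{ludens}.
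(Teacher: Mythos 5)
Your overall skeleton (concentration operator $T_D$, trace and ``plunge'' estimates from bulk kernel asymptotics obtained via a H\"ormander $\dbar$-correction, then a Landau sandwich between eigenvalue counts and $\#(\mathcal F_n\cap D)$) is exactly the paper's strategy, and your Step~1 and Step~3 match Lemmas \ref{l1}, \ref{le1}, \ref{le4} and the arguments of Sections \ref{conce}--\ref{pmainr}. The genuine gap is in Step~2, which is precisely the hard half of the theorem (Lemma \ref{mt2}). From $|g_k|\le 1$ and $g_k(z_{nj})=\delta_{jk}$ you cannot conclude that the Fekete points are sampling and interpolating for the full degree-$n$ space ``with constants tending to $1$'', and the intermediate claim that the normalized kernels $K_n(\cdot,z_{nk})/\sqrt{K_n(z_{nk},z_{nk})}$ form a nearly orthonormal family is false: Fekete points are only separated at scale $s/\sqrt n$ with $s$ fixed, so by the very Gaussian estimate you invoke the normalized inner products at neighbouring points are of size $\e^{-cs^2}$, bounded away from $0$; the Gram matrix is uniformly invertible at best, never close to the identity. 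Moreover, for the Landau upper bound you need interpolation in the normalized $L^2$ sense $\|f\|^2\le Cn^{-1}\sum_j|c_j|^2$, whereas your peak functions $g_k$ only satisfy $\|g_k\|_{L^2}^2=O(1)$, which is off by a factor $n$; and for the lower bound you need a Marcinkiewicz--Zygmund inequality $\int_{S_n}|f|^2\le Cn^{-1}\sum_j|f(z_{nj})|^2$, which does not follow from the $L^\infty$ bound on the Lagrange functions (one must control $\sum_j|\ell_j(z)|$, which is not bounded without localization).

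The paper's way around this is a degree-slack device that your proposal omits: the weighted Lagrange functions $\ell_j$ are multiplied by squared normalized peak kernels $\bigl(\mathbf{K}_{\eps n}(z,z_j)/\mathbf{K}_{\eps n}(z_j,z_j)\bigr)^2$, which localizes them at scale $1/\sqrt{n\eps}$ and restores the missing factor $1/n$ in the $L^2$ bounds (via $L^1$ and $L^\infty$ estimates and Riesz--Thorin interpolation, Lemmas \ref{1nlem}--\ref{prec}), but raises the degree, so one only gets that $\mathcal F'$ is $\rho$-interpolating for every $\rho>1$; dually, applying the Lagrange formula to $f\cdot(\text{peak})^2$ with $f\in\tilde H_{(1-2\eps)n}$ shows $\mathcal F$ is of class $M_{S_n,\rho}$ for every $\rho<1$ (\textsection\ref{polish}). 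The conclusion $D^-=D^+=1$ is then reached by letting $\rho\to1$ in Lemma \ref{mainr}, not by proving sampling/interpolation at $\rho=1$ with constants tending to $1$. Unless you supply an argument of this localization type (or some substitute giving uniform constants for a space of dimension $(1\pm\eps)n$), your Step~2 does not go through, and with it the sandwich in Step~3 collapses.
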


A proof is given in \textsection \ref{homolu}.

\begin{rem} The function $\varrho_n(z)^{-2}$
defined by
$n\sigma(D(z;\varrho_n(z)))=1$ can be considered as a regularized version of the Laplacian $\Delta Q(z)$.
Replacing $\Delta Q(z_n)$ by $\varrho_n(z_n)^{-2}$ in our definition of Beurling--Landau's densities, it becomes possible to
extend our results to cover some situations in which $\Delta Q=0$ at isolated points of the droplet.
\end{rem}

\subsection{The Ginibre case}
The potential $Q(z)=\babs{z}^2$ is known as the \textit{Ginibre potential}. It is easy to see that for this potential,
the droplet is $S=\overline{\D}$, i.e. the closed unit disk with center $0$.

\begin{thm}\label{MTH2} Suppose that $Q(z)=\babs{z}^2$.
Let $\zeta=(z_n)$ be a sequence in $\overline{\D}$ and assume that the limit
$L=\lim_{n\to \infty}\sqrt{n}(1-\babs{z_n})$
exists.
Then
\begin{enumerate}
\item If $L=+\infty$, then \eqref{ludens} holds
\item If $L<+\infty$, then
\begin{equation}\label{ludens2}D^-\lpar \mathcal{F};\zeta\rpar=D^+\lpar \mathcal{F};\zeta\rpar=\frac 1 2.\end{equation}
\end{enumerate}
\end{thm}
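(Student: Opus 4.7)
My plan is to follow the same Landau-style template that underpins the proof of Theorem~\ref{MTH1}, but to replace the generic correlation-kernel estimates by the explicit form of the weighted reproducing kernel available for the Ginibre weight,
\begin{equation*}
\bfK_n(z,w)\;=\;\e^{-n(|z|^2+|w|^2)/2}\sum_{k=0}^{n-1}\frac{n^{k+1}}{k!}(z\bar w)^k.
\end{equation*}
The diagonal $\bfK_n(z,z)$ has classical asymptotics: it is $n(1+o(1))$ on compacta of $\D$ lying at distance $\gg 1/\sqrt n$ from the boundary, it decays like $\e^{-cn(|z|-1)^2}$ on compacta of $\C\setminus\overline\D$, and on an $O(1/\sqrt n)$-window around $\partial\D$ it interpolates smoothly through the profile $\bfK_n(z,z)\sim\tfrac{n}{2}\erfc\!\bigl(\sqrt{2n}(|z|-1)\bigr)$. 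This is the Ginibre-specific input that replaces the general kernel bounds used in Theorem~\ref{MTH1}.

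For part (1), i.e.\ $L=+\infty$, I would rerun the argument of Theorem~\ref{MTH1} verbatim with these sharper kernel bounds. The condition $d_n\ge 3\log^2 n/\sqrt n$ there is needed only to guarantee that the underlying kernel estimates hold at the point $z_n$; since the $\erfc$-profile above yields $\bfK_n(z_n,z_n)/n\to 1$ and uniform off-diagonal Gaussian decay as soon as $\sqrt n(1-|z_n|)\to+\infty$, the concentration-operator machinery carries through and gives $D^\pm(\mathcal F;\zeta)=1$, hence~\eqref{ludens}.

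For part (2), i.e.\ $L<+\infty$, the point $z_n$ sits in a boundary layer of thickness $O(1/\sqrt n)$. After the blow-up $\zeta\mapsto \sqrt n(\zeta-z_n/|z_n|)$ the rescaled droplet converges to a closed half-plane $\Pi$, the window $D(z_n,R/\sqrt n)$ converges to a Euclidean disk $D_R$ of radius $R$ whose centre is at signed distance $-L$ from $\partial\Pi$, and the rescaled density $\bfK_n/n$ converges locally uniformly to $\tfrac12\erfc(\sqrt 2\,t)$, with $t$ the signed distance to $\partial\Pi$. Hence $\int_{D(z_n,R/\sqrt n)}\bfK_n\,\dA = \tfrac12 R^2 + O(R)$ as $n\to\infty$ and then $R\to\infty$. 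I would combine this with two matching inputs: a sampling-type upper bound $\#(\mathcal F_n\cap E)\le \int_E\bfK_n\,\dA+o(n\,\Area E)$ (together with $\mathcal F_n\subset \overline\D$ from Theorem~\ref{eqcon}) to get $D^+\le 1/2$, and the interpolation-type lower bound from the proof of Theorem~\ref{MTH1} applied to the half-plane model (whose Landau concentration operator has bulk spectrum accumulating at $1$), forcing Fekete points to saturate the available mass $R^2/2$ and yielding $D^-\ge 1/2$.

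The principal obstacle is the interaction of the two limits $n\to\infty$ and $R\to\infty$ at the soft edge. The geometric cut $D(z_n,R/\sqrt n)\cap\D$ and the $\erfc$-transition of $\bfK_n$ live on precisely the same scale $1/\sqrt n$, so the two limits cannot be separated cleanly; one must verify that the contribution of the transitional layer to $\#(\mathcal F_n\cap D(z_n,R/\sqrt n))$ is of order $O(R)$ rather than $O(R^2)$, so that after division by $R^2\,\Delta Q(z_n)=R^2$ it vanishes and produces exactly $1/2$ rather than some $L$-dependent quantity. Making this quantitative boundary-layer estimate uniform in $n$ is where the Ginibre-specific closed form of $\bfK_n$, in particular its Gaussian off-diagonal decay at the soft edge, becomes indispensable, and is precisely why the analogous global statement for general potentials is only conjectured.
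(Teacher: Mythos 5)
Your high-level template is the right one (Landau's concentration-operator counting plus the Ginibre-specific erfc boundary profile), but the proposal skips the two places where the paper actually has to work, and both are genuine gaps. First, for part (1) you claim the argument of Theorem~\ref{MTH1} "carries through verbatim" once $\sqrt n(1-|z_n|)\to\infty$. It does not: Lemma~\ref{mt2} only makes the \emph{bulk} part $\mathcal F'_n=\mathcal F_n\cap S_n$ interpolating and only gives the sampling inequality over $S_n$, i.e.\ at distance $\gtrsim\log^2 n/\sqrt n$ from $\partial\D$, because its proof relies on the exponential off-diagonal damping of Lemma~\ref{cor8.2}(ii), whose constant degrades as $\exp(-c\sqrt n\min\{|z-w|,\delta\})$ and is useless when $\delta=\dist(z,\partial S)\asymp 1/\sqrt n$. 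The hypothesis $L=+\infty$ allows $1-|z_n|$ to be far smaller than $\log^2 n/\sqrt n$, so one needs the \emph{global} statement that the full Fekete family is $\rho$-interpolating ($\rho>1$) and of class $M_{\overline\D,\rho}$ ($\rho<1$) — this is Lemma~\ref{h12}, and its proof requires a new kernel estimate valid up to and across $\partial\D$: the polynomial off-diagonal bound $|\mathbf K_n(z,w)|\le Cn/(1+\sqrt n|z-w|)$ of Proposition~\ref{zick}, obtained from Szeg\H{o}'s and Bleher--Mallison's asymptotics for the partial sums $s_n(n\zeta)\e^{-n\zeta}$, together with cubed peak functions to make the decay summable and the maximum principle (Lemma~\ref{BAHM}) to pass from $\D$ to $\C$. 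Nothing in your proposal supplies this ingredient, and your "sampling-type upper bound $\#(\mathcal F_n\cap E)\le\int_E\mathbf K_n\,\dA+o(n\Area E)$" is exactly the kind of statement that in this framework is a \emph{conclusion} of the interpolating property plus Landau's eigenvalue counting, not an available input.

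Second, for part (2) the passage from the trace of the concentration operator to the eigenvalue count (and hence to $\#(\mathcal F_n\cap D(z_n;R/\sqrt n))$) needs \emph{two} trace asymptotics: not only $\trace(\mathbf K_{n\rho}^{A_n(z_n)})\sim \rho R^2/2$ (your erfc computation, the paper's \eqref{bjus1}), but also that the trace of the square matches the trace up to $o(R^2)$, which is what forces the spectrum to pile up at $0$ and $1$ at the soft edge. In the paper this is \eqref{bjus0}, and it rests on the nontrivial identity of Lemma~\ref{lastl}, $\rho^2\int_\C\e^{-\rho|z-w|^2}|\Phi(-\sqrt\rho(z+\bar w))|^2\dA(w)=\rho\,\Phi(-2\sqrt\rho\,\re z)$, proved via the Dawson-function bound of Lemma~\ref{Dawsonne}; your phrase "bulk spectrum accumulating at $1$ \dots forcing Fekete points to saturate the available mass" asserts precisely what this computation establishes. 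Your concern about the interaction of the limits $n\to\infty$ and $R\to\infty$ in the transition layer is legitimate but is resolved in the paper by this pair of trace estimates combined with the eigenvalue-counting inequalities \eqref{sep2p} and \eqref{split}, not by a separate boundary-layer count. As written, the proposal reproduces the paper's skeleton but leaves out Lemma~\ref{h12}, Proposition~\ref{zick} and Lemma~\ref{lastl}, which are the substance of the Ginibre case.
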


A proof is given in \textsection \ref{gincas}.

\begin{rem} The condition that the limit $L$ exists is really superfluous and is made
merely for technical convenience. Indeed, we can assert that $\liminf_{n\to\infty} \sqrt{n}(1-\babs{z_n})=+\infty$ then \eqref{ludens} holds while if $\limsup_{n\to\infty}\sqrt{n}(1-\babs{z_n})<+\infty$ then \eqref{ludens2} holds.
These somewhat more general statements can be proved
without difficulty by using the arguments below.
\end{rem}

\subsection{A conjecture} The boundary of a droplet corresponding to a smooth potential
is in general a quite complicated set. However, owing to Sakai's theory \cite{Sa}, it is known that the situation is more manageable for potentials
$Q$ which are \textit{real-analytic} in a neighbourhood of the droplet.
Namely,
for a real analytic potential $Q$, the boundary of $S$ is a finite union of real analytic arcs and
possibly a finite number of isolated points. The boundary of $S$ may also have
finitely many singularities which can be either cusps or
double-points. This result can easily be proved using arguments from \cite{HS}, Section 4.

Suppose that $Q$ is real-analytic and strictly subharmonic in a neighbourhood of $S$, and assume that $\d S$ has no singularities. Let $S^*$ denote the set $S$ with eventual singularities and isolated points
removed.
Also let $\zeta=(z_n)_1^\infty$ be a sequence of points in $S^*$ and assume for simplicity that the limit
$L=\lim_{n\to\infty}\sqrt{n}d_n(\zeta)$ exists, where $d_n(\zeta)$ is the
distance of $z_n$ to $\d S$. We conjecture that for any sequence
$\mathcal{F}=\{\mathcal{F}_n\}$ of weighted Fekete sets, we have
(i) if $L=+\infty$, then $D^-(\mathcal{F},\zeta)=D^+(\mathcal{F},\zeta)=1$ and (ii)
if $L<+\infty$, then $D^-(\mathcal{F},\zeta)=D^+(\mathcal{F},\zeta)=1/2$.

The conjecture is supported by the results of the forthcoming paper \cite{AKM}.

\subsection{Earlier work and related topics} The topics considered in this note, as well as our basic strategy,
were inspired by
the paper \cite{L} by Landau, which concerns questions about
interpolation and sampling for functions in Paley--Wiener spaces. In particular,
our "Beurling--Landau densities'' can be seen as straightforward adaptations of
the densities defined in \cite{L}, and our results below are parallel to those
of Landau. The historically
interested reader should also consult Beurling's lecture notes (see the references in \cite{L}), where some of the basic concepts appeared earlier; in fact Landau's exposition depends in an essential way on Beurling's earlier work.

In the one-component plasma (or "OCP'') setting, one introduces a temperature $1/\beta$, where $\beta>0$. The probability measure
$\diff \mathbf{P}_n^\beta(z)=(Z_n^\beta)^{-1}\e^{-\beta H_n(z)}\diff V_n(z)$ on $\C^n$ is known as the density of states
at the temperature $1/\beta$. Here $\diff V_n$ is Lebesgue measure on $\C^n$, $H_n$ is the Hamiltonian \eqref{ham}, and
$Z_n^\beta$ is a normalizing constant. One then considers configurations $\Psi_n^\beta=\{z_i\}_1^n$ picked randomly with
respect to $\mathbf{P}_n^\beta$.

Intuitively, Fekete sets should correspond to particle configurations at temperature zero, or rather, the "limiting
configurations'' as $1/\beta\to 0$, although the latter "limit'' so far has been understood mostly on a physical level.
In this interpretation, the methods of the present note prove that the Beurling--Landau density of temperature zero configurations is in fact completely determined by properties at $\beta=1$. (More precisely: it is determined by the one- and two-point functions of $\mathbf{P}_n^1$.)

A more subtle problem is to characterise Fekete sets amongst all configurations of Beurling--Landau density one. It is believed that a certain crystalline structure  will manifest itself (known as the "Abrikosov lattice''). We refer to \cite{Fal}, \cite{SS} and the references therein for further details on this topic. A survey of related questions for minimum energy points on manifolds is found in \cite{HSa}.

\section{Weighted polynomials and triangular lattices}

Our approach combines the method for characterizing Fekete sets and triangular lattices from the papers \cite{MO} and \cite{OP} with correlation kernel estimates of the type found in \cite{B}, \cite{A}, \cite{AHM}, \cite{AHM3}.
In the Ginibre case, we use the explicit representation of the correlation kernel available for that potential, as well as
estimates from the papers \cite{Sz}, \cite{FH}, \cite{BS}, and \cite{BM}.

\subsection{Weighted polynomials} Let $H_{n}$ be the space of polynomials $p$ of degree at most $n-1$, normed by
$\left\|p\right\|_{nQ}^2:=\int_\C \babs{p(z)}^2\e^{-nQ(z)}\dA(z).$
The reproducing kernel for $H_n$ is
$K_n(z,w)=\sum_{j=0}^{n-1} e_j(z)\overline{e_j(w)},$
where $\{e_j\}_0^{n-1}$ is an orthonormal basis for $H_n$.

For our purposes, it is advantageous to work with spaces $\tilde{H}_n$ of weighted polynomials
$f=p\cdot \e^{-nQ/2}$, where $p$ is a polynomial of degree
$\le n-1$, and one defines the norm in $\tilde{H}_n$ as the usual $L^2(\dA)$-norm.
The reproducing kernel for $\tilde{H}_n$ is given by
$$\mathbf{K}_n(z,w)=K_n(z,w)\e^{-nQ(z)/2-nQ(w)/2}.$$
The function $\mathbf{K}_n$ is known as the \textit{correlation kernel} corresponding to the potential $Q$; the reproducing
property means that
$$f(z)=\langle f,\mathbf{K}_{n,z}\rangle,\qquad f\in\tilde{H}_n,~z\in\C,$$
where $\mathbf{K}_{n,z}(\zeta)=\mathbf{K}_n(\zeta,z)$, and the inner product is the usual one in $L^2=L^2(\C,\dA)$.

When $\rho n$ is not an integer, we interpret $\tilde{H}_{\rho n}$ as the
space $H_k$ where $k$ is the largest integer satisfying $k<\rho n$. All statements below shall be understood in terms of this convention; in particular,
$\mathbf{K}_{\rho n}(z,w):=K_k(z,w)\e^{-k(Q(z)+Q(w))/2}.$

\subsection{Triangular lattices}
Let $\mathcal{Z}=\left\{\mathcal{Z}_j\right\}_{j=1}^\infty$ be a triangular lattice of points in $\C$. We write
$$\mathcal{Z}_n=\{z_{n1},z_{n2},\ldots,z_{nm_n}\}.$$
It will be convenient to introduce some classes of lattices.

Let $\rho>0$.
A family $\mathcal{Z}$ is said to be $\rho$-\textit{interpolating} if
there is some constant $C$ such that, for all families of values $c=\{c_n\}_1^\infty$,
$c_n=\{c_{nj}\}_{j=1}^{m_n}$,
such that
$$\sup_n \frac 1 {n\rho} \sum_{j=1}^{m_n} \babs{c_{nj}}^2<\infty,$$
there exists a sequence
$f_n\in \tilde{H}_{\rho n}$ such that
$f_n(z_{nj})=c_{nj}$, $1\le j\le m_n$, and
\begin{equation*}\left\|f_n\right\|^2\le C \frac 1 {n\rho} \sum_{j=1}^{m_n}\babs{c_{nj}}^2.\end{equation*}

We say that a family $\mathcal{Z}$ is \textit{uniformly separated} if there is a number $s>0$ such that
for any two distinct points $z,w\in \mathcal{Z}_n$ we have
$\babs{z-w}> s/\sqrt{n}$. The following simple lemma holds.

\begin{lem} \label{BASE0} Any interpolating family which is
contained in $S$ is uniformly separated.
\end{lem}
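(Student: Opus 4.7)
The plan is an interpolation-duality argument. Fix $n$ and two distinct points $z_1, z_2 \in \mathcal{Z}_n$, set $d := |z_1 - z_2|$, and aim to show $d \geq s/\sqrt{n}$ for some constant $s > 0$ independent of $n$ and of the chosen pair. First, apply the $\rho$-interpolation hypothesis to the data that equals $1$ at $z_1$ and vanishes at every other point of $\mathcal{Z}_n$ (and $c_m \equiv 0$ for all indices $m \neq n$). This yields a weighted polynomial $f_n \in \tilde H_{\rho n}$ with $f_n(z_1) = 1$, $f_n(z_2) = 0$, and $\|f_n\|^2 \leq C/(n\rho)$.

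Write $f_n = p_n \cdot \e^{-kQ/2}$ with $k = \lfloor \rho n \rfloor$ and $p_n$ a polynomial of degree $< k$. Since $p_n(z_2) = 0$, factor $p_n(z) = (z - z_2) q(z)$ and set $\tilde q(z) := q(z)\,\e^{-kQ(z)/2}$, which is again an element of $\tilde H_{\rho n}$. Evaluating at $z_1$ gives $|\tilde q(z_1)|^2 = 1/d^2$, so the task reduces to bounding $|\tilde q(z_1)|^2$ from above by $O(n)$. The main tools will be the pointwise reproducing-kernel inequality $|g(z)|^2 \leq \mathbf K_{\rho n}(z,z)\,\|g\|^2$ valid for $g \in \tilde H_{\rho n}$, together with the standard uniform upper bound $\mathbf K_{\rho n}(z,z) \leq An$ in a neighborhood $\Lambda$ of the droplet $S$; this upper bound is a well-known correlation-kernel estimate (cf.\ e.g.\ \cite{B}, \cite{AHM}).

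To estimate $\|\tilde q\|^2 = \int_{\C} |f_n(z)|^2/|z - z_2|^2\, \diff A(z)$, I would split the integral at a small radius $r$ around $z_2$. Outside $D(z_2, r)$, the weight $|z - z_2|^{-2}$ is at most $r^{-2}$, so that contribution is at most $\|f_n\|^2/r^2$. Inside $D(z_2, r) \subset \Lambda$, the pointwise kernel bound applied to $\tilde q$ itself yields $\int_{D(z_2, r)} |\tilde q|^2\, \diff A \leq An r^2 \|\tilde q\|^2$. Choosing $r^2 = 1/(2An)$ absorbs the interior contribution onto the left, giving $\|\tilde q\|^2 \leq 4An\,\|f_n\|^2 \leq 4AC/\rho$. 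A second application of the kernel bound, this time at $z_1$, then yields $1/d^2 = |\tilde q(z_1)|^2 \leq An \cdot 4AC/\rho$, whence $d \geq s/\sqrt n$ with $s := \sqrt{\rho/(4A^2 C)}$.

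The only non-elementary input, and hence the main obstacle, is the uniform bulk upper bound $\mathbf K_{\rho n}(z,z) \leq An$ in a neighborhood of $S$; this is a non-trivial correlation-kernel estimate, but it is a standard ingredient used throughout the paper in any case. The remaining steps are elementary Cauchy--Schwarz-type manipulations.
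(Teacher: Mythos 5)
Your argument is correct, but it follows a genuinely different route from the paper. The paper produces the same interpolating function $f$ with $f(z_1)=1$, $f(z_2)=0$, $\|f\|^2\le C/(n\rho)$, but then concludes via the Bernstein-type gradient estimate of Lemma \ref{bernstein}: $1=\babs{\babs{f(z_1)}-\babs{f(z_2)}}\le \|\nabla\babs{f}\|_{L^\infty(\Lambda)}\babs{z_1-z_2}\le Cn\|f\|\,\babs{z_1-z_2}$, which gives separation in one line. You instead divide out the zero at $z_2$, and control $\|\tilde q\|^2=\int\babs{f}^2\babs{z-z_2}^{-2}\dA$ by splitting at radius $r\sim(An)^{-1/2}$ and absorbing the inner piece using the pointwise evaluation bound; a second application of that bound at $z_1$ then yields $d\gtrsim 1/\sqrt n$. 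The only external ingredient you need, $\mathbf{K}_{\rho n}(z,z)\le An$, is indeed available: it follows from Lemma \ref{cor8.2}(i) together with $\wh Q\le Q$ (and so holds on all of $\C$, not just near $S$), or more elementarily from the sub-mean-value estimate of Lemma \ref{subh}, since $\Delta Q$ is bounded near $S$ and $z_1,z_2\in S$; so your route is no less self-contained than the paper's, which in turn rests on Lemma \ref{bernstein} (itself proved via Cauchy's estimate and Lemma \ref{subh}). Two small points you should make explicit: (i) $\|\tilde q\|<\infty$ (needed to absorb the term $\tfrac12\|\tilde q\|^2$), which holds because $\tilde q\in\tilde H_{\rho n}$ and the weight is admissible, the singularity at $z_2$ being removable since $p_n(z_2)=0$; and (ii) for the finitely many small $n$ for which $D(z_2;r)$ need not sit inside $\Lambda$ (irrelevant if you quote Lemma \ref{cor8.2}(i), which is global), separation is trivially arranged by shrinking $s$. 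What the paper's approach buys is economy — Lemma \ref{bernstein} is needed anyway for the separation of Fekete sets (Lemma \ref{jaa}) — while your approach buys independence from gradient estimates, using only the diagonal kernel bound and a standard divide-out-the-zero argument.
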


A proof is given in \textsection \ref{BASE1}.

Intuitively, an interpolating family should be
"sparse''. We will also need a notion which implies the "density'' of a family contained in $S$. For this purpose, the following classes
have turned out to be convenient.

\begin{defn} \label{mdef0} Write
$S^+=S+\overline{D(0;s/\sqrt{n})},$
where $s$ is some fixed positive number. Let $\mathcal{Z}\subset S$ be a triangular family. We hay that $\mathcal{Z}$ is of class
$M_{S,\rho}$ if $\mathcal{Z}$ is uniformly $2s$-separated and
$$\int_{S^+}\babs{f}^2\le C\frac 1 {n\rho}\sum_{z_{nj}\in \mathcal{Z}_n} \babs{f(z_{nj})}^2,\quad f\in\tilde{H}_{n\rho}$$
for all large $n$.
\end{defn}

\begin{defn} \label{mdef} Let $\delta_n=\log^2 n/\sqrt{n}$ and put
$S_n=\left\{z\in S;~\dist(z,\d S)\ge 2\delta_n\right\}.$
We say that a triangular family $\mathcal{Z}\subset S$ is of class $M_{S_n,\rho}$ if $\mathcal{Z}$ is uniformly separated and
$$\int_{S_n}\babs{f}^2\le C\frac 1 {n\rho}\sum_{z_{nj}\in \mathcal{Z}_n} \babs{f(z_{nj})}^2,\quad f\in\tilde{H}_{n\rho}$$
for all large $n$.
\end{defn}

\subsection{Results in the interior of the droplet} \label{homolu}
We have the following lemma.

\begin{lem} \label{mainr} Let $\zeta=(z_n)$ be a convergent sequence in $S$ with $\dist(z_n,\d S)\ge 3\delta_n$ for all $n$. Then
\begin{enumerate}
\item[(i)] If $\mathcal{Z}$ is of class $M_{S_n,\rho}$, then $D^-(\mathcal{Z};\zeta)\ge \rho$,
\item[(ii)] If $\mathcal{Z}$ is $\rho$-interpolating, then $D^+(\mathcal{Z};\zeta)\le \rho$.
\end{enumerate}
\end{lem}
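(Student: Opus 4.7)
The plan is to carry out Landau's concentration-operator argument, adapted from Paley--Wiener space to the weighted polynomial space $\tilde H_{n\rho}$ by means of correlation kernel estimates. The common machinery is the concentration operator $T_n = P_n M_{\chi_{D_n}}$ on $\tilde H_{n\rho}$, with $D_n = D(z_n, R/\sqrt n)$ and $P_n$ the orthogonal projection. Under our hypothesis $\dist(z_n,\d S)\geq 3\delta_n$ we have $D_n \subset S_n$ eventually, and the interior estimates $\mathbf K_{n\rho}(z,z)=n\rho\Delta Q(z)(1+o(1))$ and $|\mathbf K_{n\rho}(z,w)|^2\leq Cn^2\Delta Q(z)\Delta Q(w)\e^{-cn|z-w|^2}$ on $S_n$ (cited from \cite{A}, \cite{AHM}, \cite{AHM3}, \cite{B}) yield
\[
  \trace T_n = \rho R^2\Delta Q(z_n)(1+o_n(1)),\qquad \trace T_n(I-T_n) \leq CR,
\]
the second estimate coming from a Gaussian computation near $\d D_n$.

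For part (ii), I would use the duality-reformulation of $\rho$-interpolation, namely that the normalized kernels $\kappa_{nj}=\mathbf K_{n\rho,z_{nj}}/\|\mathbf K_{n\rho,z_{nj}}\|$ form a Riesz sequence with uniform constants. Set $V' = \lspan\{\kappa_{nj}: z_{nj}\in\mathcal Z_n\cap D_n\}$, so $\dim V' = \#(\mathcal Z_n\cap D_n)$. Using the off-diagonal decay together with uniform separation, I would bound $\sum_j \int_{\C\setminus D_n^A}|\kappa_{nj}|^2\,\dA \leq C(R+A)\e^{-cA^2}$ for the enlarged disk $D_n^A = D(z_n,(R+A)/\sqrt n)$; by Cauchy--Schwarz and the Riesz property this upgrades to $\int_{\C\setminus D_n^A}|f|^2 \leq \eta(R,A)\|f\|^2$ for every $f \in V'$, with $\eta(R,A) = C(R+A)\e^{-cA^2}$. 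Applying the trace bound of Step 1 to the larger operator $T_n^A$ gives $(1-\eta)\dim V' \leq \trace(P_{V'}T_n^A) \leq \trace T_n^A = \rho(R+A)^2\Delta Q(z_n)(1+o_n(1))$. Choosing $A = \sqrt{C\log R}$ makes $\eta\to 0$ and $A/R\to 0$ as $R\to\infty$, and letting $n\to\infty$ then $R\to\infty$ yields $D^+(\mathcal Z;\zeta)\leq \rho$.

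For part (i), let $V_\epsilon$ be the spectral subspace of $T_n$ with eigenvalues $\geq 1-\epsilon$. A Chebyshev-type bound $\sum_{\lambda_k<1-\epsilon}\lambda_k \leq \epsilon^{-1}\trace T_n(I-T_n) \leq CR/\epsilon$ yields $\dim V_\epsilon \geq \rho R^2\Delta Q(z_n) - CR/\epsilon + o_n(R^2)$. Every unit $f\in V_\epsilon$ satisfies $\int_{\C\setminus D_n}|f|^2\leq \epsilon$. The Bernstein-type pointwise inequality $|f(z)|^2\leq Cn\int_{D(z,s/\sqrt n)}|f|^2\,\dA$ for weighted polynomials in the interior of $S$, combined with the uniform separation of $\mathcal Z$, converts this to $\sum_{z_{nj}\notin D(z_n,(R+2s)/\sqrt n)}|f(z_{nj})|^2\leq Cn\epsilon$. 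The sampling inequality (valid since $D_n\subset S_n$) then gives
\[
  (1-C\epsilon/\rho)\|f\|^2 \leq C(n\rho)^{-1}\sum_{z_{nj}\in D(z_n,(R+2s)/\sqrt n)}|f(z_{nj})|^2,
\]
so the restriction map $V_\epsilon\to \C^{\#(\mathcal Z_n\cap D(z_n,(R+2s)/\sqrt n))}$ is injective, producing $\#(\mathcal Z_n\cap D(z_n,(R+2s)/\sqrt n))\geq \dim V_\epsilon$. Letting $n\to\infty$, $R\to\infty$, $\epsilon\to 0$ delivers $D^-(\mathcal Z;\zeta)\geq\rho$.

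The main obstacle is the passage from the $L^2$-concentration statements on $D_n$ (for eigenfunctions of $T_n$, or for linear combinations of reproducing kernels) to the corresponding $\ell^2$-concentration at sample points. This step relies simultaneously on the Bernstein-type weighted polynomial inequality and on off-diagonal Gaussian decay of $\mathbf K_{n\rho}$, both of which require room away from $\d S$; the scale $\delta_n = \log^2 n/\sqrt n$ in the hypothesis $\dist(z_n,\d S)\geq 3\delta_n$ is precisely what supplies a comfortable transition layer for the cutoffs used in both of these ingredients.
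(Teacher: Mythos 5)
Your proposal is correct in outline, and the two halves relate to the paper differently. Part (i) is essentially the paper's own argument in different clothing: your injectivity of the restriction map on the spectral subspace $V_\epsilon$ is exactly the contrapositive of the paper's Lemma \ref{sep} (a nonzero combination of the top eigenfunctions vanishing at all nodes in the enlarged disk would contradict the $M_{S_n,\rho}$ sampling inequality, via \eqref{oms}), and your Chebyshev bound $\sum_{\lambda_k<1-\epsilon}\lambda_k\le\epsilon^{-1}\trace T_n(I-T_n)$ together with the trace asymptotics is the same conversion the paper performs with the measure $\mu_n=\sum_j\delta_{\lambda_j}$ and Lemmas \ref{le1} and \ref{le4}. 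Part (ii), however, is a genuinely different route: the paper builds the dual system $f_{nj}$ from the interpolation property, shows $\langle \mathbf{K}_{n\rho}^{A_n}f,f\rangle\ge\delta\|f\|^2$ on its span, and invokes Weyl--Courant plus the second trace estimate; you instead take the span $V'$ of the normalized reproducing kernels at the nodes, use the lower Riesz bound (which indeed follows from the paper's definition of $\rho$-interpolating by the adjoint/bounded-right-inverse duality, together with $\mathbf{K}_{n\rho}(z_j,z_j)\asymp n\rho$ in the bulk from Lemma \ref{l1}), and compare $\dim V'$ with $\trace \mathbf{K}_{n\rho}^{A_n^A}$ over an enlarged disk. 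Your route buys a proof that needs only the first-order trace asymptotics for part (ii), at the price of requiring off-diagonal kernel decay and the Riesz-sequence reformulation; the paper's route uses nothing beyond the defining interpolation property but needs the $\trace T-\trace T^2$ estimate for both halves.

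One caveat: the global Gaussian bound $\babs{\mathbf{K}_{n\rho}(z,w)}^2\le Cn^2\Delta Q(z)\Delta Q(w)\e^{-cn\babs{z-w}^2}$ that you cite is not available for the $C^3$-smooth potentials treated here; the paper only has Gaussian behaviour at scales $\lesssim\delta_n$ (via $\mathbf{K}^\#$, Lemma \ref{l1}) and otherwise the exponential off-diagonal damping $\e^{-c\sqrt{n}\min\{\babs{z-w},\delta\}}$ of Lemma \ref{cor8.2}(ii). This is an overstatement rather than a gap: your tail estimate for the kernels $\kappa_{nj}$ then gives $\eta(R,A)\precsim R^2\e^{-cA}+o_n(1)$ instead of $C(R+A)\e^{-cA^2}$, and choosing $A\asymp\log R$ (rather than $\sqrt{\log R}$) still yields $\eta\to 0$ and $(R+A)^2/R^2\to 1$, so the conclusion $D^+(\mathcal{Z};\zeta)\le\rho$ is unaffected. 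Similarly, the heuristic "Gaussian computation near $\d D_n$'' for $\trace T_n(I-T_n)\le CR$ is precisely what the paper carries out rigorously through the Berezin kernel in Lemmas \ref{le3} and \ref{le4}, so you are resting on the same trace estimates as the paper does.
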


A proof is given in Section \ref{pmainr}.

When $\mathcal{F}_n$ is a Fekete set, we write $\mathcal{F}_n^\prime=\mathcal{F}_n\cap S_n$ and $\mathcal{F}^\prime=\{\mathcal{F}_n^\prime\}$.

\begin{lem} \label{mt2} One has that
\begin{enumerate}
\item $\mathcal{F}$ is uniformly separated,
\item $\mathcal{F}^\prime$ is $\rho$-interpolating for any $\rho>1$,
\item $\mathcal{F}$ is of class $M_{S_n,\rho}$ whenever $\rho<1$.
\end{enumerate}
\end{lem}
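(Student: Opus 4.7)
The plan is to treat the three statements in order, relying on Fekete extremality together with a Bernstein estimate for (1), and on correlation kernel asymptotics for (2) and (3).

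\emph{Part (1):} Fix indices $i\ne j$ and consider the weighted polynomial $P_i(z):=\prod_{k\ne i}(z-z_{nk})\,\e^{-nQ(z)/2}\in \tilde H_n$. By the Fekete maximality (replacing $z_{ni}$ by any $w\in\C$ cannot increase $V_n$) together with the max principle for weighted polynomials (Lemma~\ref{BAHM}), $z_{ni}$ is a global maximum of $|P_i|$ on $\C$. Since $P_i(z_{nj})=0$, the mean value theorem combined with the standard Bernstein inequality $\sup|P_i'|\le C\sqrt n\,\sup|P_i|$ (valid for weighted polynomials of degree $\le n$ in a neighborhood of $S$) yields
\[|P_i(z_{ni})|=|P_i(z_{ni})-P_i(z_{nj})|\le C\sqrt n\,|z_{ni}-z_{nj}|\cdot|P_i(z_{ni})|,\]
forcing $|z_{ni}-z_{nj}|\ge c/\sqrt n$ uniformly in $i,j,n$.

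\emph{Part (2):} For $\rho>1$, I would construct interpolants via normalized reproducing kernel peaks $\phi_j(z):=\mathbf{K}_{\rho n}(z,z_{nj})/\mathbf{K}_{\rho n}(z_{nj},z_{nj})$ at the Fekete points $z_{nj}\in\mathcal F_n'$. The correlation kernel estimates from \cite{A,AHM,AHM3} give the bulk diagonal asymptotics $\mathbf{K}_{\rho n}(z,z)\sim \rho n\Delta Q(z)$ and Gaussian off-diagonal decay $|\mathbf{K}_{\rho n}(z,w)|\le Cn\,\e^{-c\rho n|z-w|^2}$ on $S_n$. Because the separation from part (1) is $\ge s/\sqrt n$ while the kernel has shorter wavelength $1/\sqrt{\rho n}$, the Gram matrix $G_{ij}=\phi_j(z_{ni})$ is a bounded perturbation of the identity on $\ell^2$ whose inverse is controlled only in terms of $\rho>1$ (the off-diagonal row sums being estimated by the separation-density balance $\int_{|w|\ge s}\e^{-c\rho|w|^2}dA(w)$). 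Setting $f_n:=\sum_j d_j\phi_j$ with $d=G^{-1}c$ produces the interpolant, and the norm bound follows from $\|\phi_j\|^2=1/\mathbf{K}_{\rho n}(z_{nj},z_{nj})\le C/(\rho n)$ together with the Riesz property of $\{\phi_j\}$.

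\emph{Part (3):} Uniform separation is part (1). For the sampling inequality $\int_{S_n}|f|^2\le(C/n\rho)\sum_j|f(z_{nj})|^2$ on $\tilde H_{\rho n}$ with $\rho<1$, the plan is three-fold: (a) establish a covering property of $\mathcal F_n$ on $S_n$ at scale $1/\sqrt n$, i.e.\ every $z\in S_n$ lies within $C/\sqrt n$ of some Fekete point; (b) apply a local Bernstein-type estimate to $f\in\tilde H_{\rho n}$, whose natural scale $1/\sqrt{\rho n}$ exceeds $1/\sqrt n$ when $\rho<1$, transferring $|f|^2$ at generic points of $S_n$ to the values at nearest Fekete points; (c) integrate over a Voronoi covering of $S_n$ adapted to $\mathcal F_n$ and sum, with the separation from (1) controlling the multiplicity. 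The covering in step (a) is the main obstacle: it must extract a quantitative density statement at scale $1/\sqrt n$ from the merely global Fekete extremality, and the natural route passes through the Lagrange bound $|L_j(z)|\le 1$ (forced by moving $z_{nj}$ to $z$, where $L_j\in\tilde H_n$ is the weighted Lagrange polynomial with $L_j(z_{nk})=\delta_{jk}$) together with lower bounds on the Christoffel function $1/\mathbf{K}_n$. I expect this density-at-scale argument, together with the boundary control of the layer $S\setminus S_n$ of width $\delta_n$, to be the most delicate part of the proof.
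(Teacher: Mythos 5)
Your part (1) is essentially the paper's own argument: Fekete extremality in the $i$-th variable makes the normalized weighted Lagrange function $\ell_j$ (your $P_i/P_i(z_{ni})$) satisfy $\babs{\ell_j}\le 1$ everywhere with $\ell_j(z_{nj})=1$, and the Bernstein estimate of Lemma \ref{bernstein} then forces $\babs{z_{nj}-z_{nk}}\ge c/\sqrt{n}$ (the appeal to Lemma \ref{BAHM} is superfluous, since extremality already gives the global bound). Parts (2) and (3), however, contain genuine gaps. In (2) your argument uses only the separation of $\mathcal{F}'$ and never the Fekete structure, nor the degree surplus $\rho>1$, in any quantitative way. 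The off-diagonal row sums of the Gram matrix $G_{ij}=\phi_j(z_{ni})$ are, after rescaling, of order $s^{-2}$ when the separation constant $s$ is small (and the long-range decay of $\mathbf{K}_{n\rho}$ is only $\e^{-c\sqrt{n}\babs{z-w}}$, cf.\ Lemma \ref{cor8.2}, not Gaussian), so $G=I+E$ is not a small perturbation of the identity and cannot be inverted by a Neumann series; the ``Riesz property of $\{\phi_j\}$'' you invoke is precisely equivalent to the interpolating property you are trying to prove. Indeed, if the argument worked as stated it would show that \emph{every} uniformly separated family, of arbitrarily high density, is $\rho$-interpolating, contradicting Lemma \ref{mainr}(ii). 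The paper's mechanism is different and does use Fekete extremality: one sets $L_j=\lpar \mathbf{K}_{\eps n}(\cdot,z_j)/\mathbf{K}_{\eps n}(z_j,z_j)\rpar^2\ell_j$, so that $L_j(z_k)=\delta_{jk}$ exactly and $\babs{\ell_j}\le 1$ by the Leja--Siciak bound, with the kernel-peak factor of degree $2\eps n$ being exactly where $\rho=1+2\eps>1$ is spent; then $\|L_j\|_{L^1}\le C/n$ and $\|\sum_j\babs{L_j}\|_{L^\infty}\le C$ give the $\ell^2\to L^2$ bound by Riesz--Thorin, with no matrix inversion.

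In (3) you explicitly leave the covering step (a) unproved, and even granting a covering of $S_n$ by Fekete points at scale $C/\sqrt{n}$, step (b) does not close: for $f\in\tilde{H}_{\rho n}$ the Bernstein bound transfers values over a distance $\babs{z-z_j}\asymp C/\sqrt{n}$ only at the cost of an error of size $C'\sqrt{\rho n}\babs{z-z_j}$ times the norm of $f$, i.e.\ an error comparable to the quantity being estimated; the gain from $\rho<1$ is merely a factor $\sqrt{\rho}$ close to $1$ and cannot absorb a covering constant of order one. Such covering-plus-Bernstein arguments require the covering radius to be a \emph{small} multiple of the Nyquist scale, which a density-one Fekete set does not provide. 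The paper avoids any covering statement altogether: for each $z\in S_n$ it applies the exact Lagrange interpolation formula to $g_z=f\cdot\lpar\mathbf{K}_{n\eps}(\cdot,z)/\mathbf{K}_{n\eps}(z,z)\rpar^2\in\tilde{H}_n$ (again a degree budget of $2\eps n$, i.e.\ $\rho=1-2\eps<1$), obtaining $f(z)=\sum_j f(z_j)\tilde{L}_j(z)$, and then the $L^1(S_n)$ and $L^\infty(S_n)$ bounds on $\babs{\tilde{L}_j}$ (Lemmas \ref{aple} and \ref{3nlem}) plus interpolation yield the sampling inequality defining the class $M_{S_n,1-2\eps}$. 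So part (1) stands, but for (2) and (3) you need the Lagrange-times-kernel-peak construction, or some other genuine use of the Fekete property beyond mere separation, to complete the proof.
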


A proof is given in Section \ref{mt2proof}.

Using lemmas \ref{mainr} and \ref{mt2}, we infer that for $\zeta=(z_n)$ with $\dist(z_n,\d S)\ge 3\delta_n$, we have for any $\eps>0$ that
$D^-(\mathcal{F};\zeta)\ge 1-\eps$ and $D^+(\mathcal{F};\zeta)\le 1+\eps$. This finishes the proof
of Theorem \ref{MTH1}, since evidently $D^-\le D^+$. q.e.d.

\subsection{The Ginibre case} \label{gincas} Now let $Q=\babs{z}^2$ so that $S=\overline{\D}$,
and fix a convergent sequence $\zeta=(z_j)$ in $\overline{\D}$ such that the limit
$L=\lim_{n\to\infty}\sqrt{n}(1-\babs{z_n})$ exists.

\begin{lem} \label{h11} Suppose that $Q=\babs{z}^2$, and let $\mathcal{Z}$ be a triangular family contained in $\overline{\D}$.
\begin{enumerate}
\item If $\mathcal{Z}$ is of class $M_{\overline{\D},\rho}$, then
\begin{equation*}
D^-(\mathcal{Z};\zeta)\ge \begin{cases} \rho & \text{if}\quad L=+\infty,\cr
\rho/2 & \text{if}\quad L<+\infty.\cr
\end{cases}
\end{equation*}
\item If $\mathcal{Z}$ is $\rho$-interpolating, then
\begin{equation*}
D^+(\mathcal{Z};\zeta)\le \begin{cases} \rho & \text{if}\quad L=+\infty,\cr
\rho/2 & \text{if}\quad L<+\infty.\cr
\end{cases}
\end{equation*}
\end{enumerate}
\end{lem}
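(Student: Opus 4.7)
The plan is to adapt the Landau concentration-operator strategy from Lemma \ref{mainr} to the Ginibre setting, using the explicit form of the correlation kernel to capture the halving of the limiting spectral density at the unit circle. Fix $R > 0$ and set $\Omega_n = D(z_n; R/\sqrt n)$. Let $P$ denote the orthogonal projection $L^2(\dA) \to \tilde{H}_{\rho n}$, with reproducing kernel $\mathbf{K} = \mathbf{K}_{\rho n}$, and introduce the self-adjoint concentration operator $T_n = \chi_{\Omega_n} P \chi_{\Omega_n}$, whose spectrum lies in $[0,1]$. One has
$$
\Tr T_n = \int_{\Omega_n}\mathbf K(z,z)\,\dA(z), \qquad \Tr T_n - \Tr T_n^2 = \int_{\Omega_n}\int_{\C\setminus\Omega_n}|\mathbf K(z,w)|^2\,\dA(w)\,\dA(z).
$$

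The technical core is the pair of spectral estimates
$$
\Tr T_n \longrightarrow \begin{cases} \rho R^2, & L = +\infty,\\ \rho R^2/2, & L<+\infty,\end{cases}\qquad \Tr T_n - \Tr T_n^2 = O(R),
$$
derived from the explicit Ginibre formula
$$
\mathbf K(z,z) = \rho n \, e^{-\rho n|z|^2}\sum_{j=0}^{\lfloor \rho n\rfloor - 1}\frac{(\rho n|z|^2)^j}{j!}.
$$
When $L = +\infty$, uniform asymptotics of the incomplete gamma function (\cite{Sz}, \cite{FH}) give $\mathbf K(z,z)/(\rho n)\to 1$ uniformly on $\Omega_n$. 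When $L < +\infty$, a rescaling $z = z_n + u/\sqrt n$ places $\rho n|z|^2 - \lfloor \rho n\rfloor$ in the transition regime of size $O(\sqrt n)$, and the same cited asymptotics yield $\mathbf K(z,z)/(\rho n) \to \tfrac12\erfc$ of a suitable linear function of $\operatorname{Re}(u\,\overline{z_n/|z_n|})$; integrating over $u \in D(0,R)$ and using the saturation of $\erfc$ at large argument yields a limit of $R^2/2 + O(R)$, hence $\Tr T_n \to \rho R^2/2$ with an $O(R)$ correction. The off-diagonal bound $|\mathbf K(z,w)|\lesssim \rho n\,e^{-c\rho n|z-w|^2}$ (\cite{BS}, \cite{BM}) restricts the boundary term to an $O(1/\sqrt n)$-thick annulus, giving $\Tr T_n - \Tr T_n^2 = O(R)$. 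Since $\sum_i \lambda_i(1-\lambda_i) = O(R)$, the eigenvalues of $T_n$ cluster near $\{0,1\}$, and $\#\{\lambda_i(T_n) \ge 1/2\} = \Tr T_n + O(R)$.

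Given these inputs, parts (1) and (2) follow by the sampling and interpolation arguments of Lemma \ref{mainr}. For (1), apply the sampling inequality of Definition \ref{mdef0} to an orthonormal basis of the spectral subspace $\{\lambda_i \ge 1/2\}$ of $T_n$; a Bessel-type summation, together with the concentration of these eigenfunctions on $\Omega_n$ up to Gaussian tails, yields $\#(\mathcal Z_n \cap \Omega_n) \ge \Tr T_n - O(R)$, which upon division by $R^2\Delta Q(z_n) = R^2$ and the successive limits $n\to\infty$, $R\to\infty$ gives the stated lower bound for $D^-(\mathcal Z;\zeta)$. For (2), the dual scheme from \cite{MO} interpolates the values $(\lambda_i^{-1/2}\psi_i(z_{nj}))$ on $\mathcal Z_n$ by elements of $\tilde H_{\rho n}$ of controlled norm, and pairing with $\mathbf K$ produces $\#(\mathcal Z_n\cap \Omega_n) \le \Tr T_n + O(R)$, whence the corresponding upper bound for $D^+$. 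The main obstacle is the kernel asymptotic in the boundary transition regime---extracting the factor $1/2$ uniformly on $\Omega_n$ with errors that become $o(R^2)$ after dividing by $R^2$---which relies crucially on the uniform incomplete-gamma asymptotics and the off-diagonal Gaussian bounds of \cite{Sz, FH, BS, BM}; once these inputs are available, the abstract concentration-operator machinery of \cite{MO, OP} transplants from the interior case essentially verbatim.
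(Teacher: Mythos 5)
Your overall strategy is the same as the paper's: concentration operator, trace of $T_n$ and of $T_n^2$, boundary rescaling producing the $\erfc$ kernel, and then the spectral-counting machinery of Lemma \ref{mainr} (via \eqref{sep2p} and Lemma \ref{numb}) to convert trace asymptotics into density bounds. The first-trace computations (bulk limit $\rho R^2$, boundary limit $\rho R^2/2$) are also in line with \eqref{lemiii} and \eqref{bjus1}.

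The gap is in your justification of $\Tr T_n-\Tr T_n^2=O(R)$ when $L<+\infty$. You invoke an off-diagonal Gaussian bound $\babs{\mathbf K(z,w)}\lesssim \rho n\,\e^{-c\rho n\babs{z-w}^2}$, attributed to \cite{BS}, \cite{BM}; no such bound holds near the unit circle. For $z,w$ within $O(1/\sqrt n)$ of $\T$ and separated tangentially, the kernel decays only polynomially: the limiting edge kernel $\mathbf F(z,w)=\e^{z\bar w-\babs{z}^2/2-\babs{w}^2/2}\Phi(-z-\bar w)$ satisfies $\babs{\mathbf F(z,w)}\asymp \babs{z-w}^{-1}$ along the boundary direction (this is exactly the content of Lemma \ref{Dawsonne}, and it is why the paper can only prove the weak uniform bound $\babs{\mathbf K_n(z,w)}\le Cn/(1+\sqrt n\babs{z-w})$ of Proposition \ref{zick}, itself requiring the Szeg\H{o} and Bleher--Mallison asymptotics case by case). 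So a Gaussian-tail argument confining the off-diagonal mass to an $O(1/\sqrt n)$-annulus fails precisely in the regime that produces the factor $1/2$. The paper closes this step differently: it computes the second trace exactly in the boundary scaling limit and uses the identity $\rho^2\int_\C \e^{-\rho\babs{z-w}^2}\babs{\Phi\lpar-\sqrt\rho(z+\bar w)\rpar}^2\dA(w)=\rho\,\Phi(-2\sqrt\rho\,\re z)$ (Lemma \ref{lastl}, proved by an integration by parts using the Dawson-function decay), i.e.\ that $\mathbf F$ reproduces itself, to conclude $\Tr(T_n^2)=\Tr T_n+o(R^2)$ as in \eqref{bjus0}. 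That weaker $o(R^2)$ estimate is all the density argument needs, but some substitute for your unjustified Gaussian bound --- either Lemma \ref{lastl} or a genuine proof of polynomial-tail summability giving $O(R\log R)$ --- must be supplied before your argument is complete. (In the bulk case $L=+\infty$ the Gaussian decay of the Ginibre$(\infty)$ kernel is correct and your argument is fine there.)
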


A proof is given in \textsection \ref{hereitis}.

\begin{lem} \label{h12} Let $\mathcal{F}=\{\mathcal{F}_n\}$ be a family of Fekete sets with respect to the potential $Q=\babs{z}^2$.
Then $\mathcal{F}$ is of class $M_{\overline{\D},\rho}$ for any $\rho<1$ and $\rho$-interpolating for any $\rho>1$.
\end{lem}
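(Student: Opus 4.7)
The plan is to revisit the proof of Lemma \ref{mt2}, substituting at each step where it invoked interior correlation-kernel estimates the sharp boundary-robust Ginibre asymptotics of \cite{Sz}, \cite{FH}, \cite{BS}, \cite{BM}. For $Q(z)=|z|^2$ the reproducing kernel has the explicit form
\begin{equation*}
\mathbf{K}_n(z,w) = \tfrac{n}{\pi}\,e^{n(z\bar w-|z|^2/2-|w|^2/2)}\sum_{j=0}^{n-1}\frac{(nz\bar w)^j}{j!},
\end{equation*}
from which two facts are extracted uniformly up to and just past $\partial\D$: on the diagonal, $\mathbf{K}_n(z,z)\asymp n$ on $\{z:\sqrt n(1-|z|)\geq -C\}$ with the sharp transition $\mathbf{K}_n(z,z)\sim\tfrac{n}{2\pi}\erfc\!\bigl(\sqrt{2n}(|z|-1)\bigr)$; and off the diagonal, $|\mathbf{K}_n(z,w)|\leq C\sqrt{\mathbf{K}_n(z,z)\mathbf{K}_n(w,w)}\,e^{-cn|z-w|^2}$ throughout $\C$. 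These are the boundary-robust upgrades of the interior kernel bounds used to prove Lemma \ref{mt2}.

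\textbf{Interpolation for $\rho>1$.} Since $\mathcal{F}_n\subset\overline\D$ by Theorem \ref{eqcon}(1), the diagonal bound gives $\mathbf{K}_{\rho n}(z_{nj},z_{nj})\asymp\rho n$ uniformly in $j$, including boundary Fekete points. Given data $\{c_{nj}\}$, form the Gram matrix $G_{jk}=\mathbf{K}_{\rho n}(z_{nj},z_{nk})$ and construct the interpolant $f_n=\sum_j a_{nj}\,\mathbf{K}_{\rho n}(\cdot,z_{nj})$ with $\mathbf a=G^{-1}\mathbf c$, so that $f_n(z_{nj})=c_{nj}$ and $\|f_n\|^2=\langle G^{-1}\mathbf c,\mathbf c\rangle$. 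Uniform separation of $\mathcal{F}$ (Lemma \ref{mt2}(1)) combined with the Gaussian off-diagonal decay controls the normalised Gram matrix $\tilde G_{jk}=G_{jk}/(G_{jj}G_{kk})^{1/2}$ by a convergent lattice sum, and following the Schur-test / duality strategy of \cite{MO}, \cite{OP} the margin $\rho>1$ is exploited to make the off-diagonal row sums strictly less than $1$; this yields $\|G^{-1}\|\lesssim 1/(\rho n)$ and the required $L^2$ estimate.

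\textbf{The class $M_{\overline\D,\rho}$ for $\rho<1$.} For $f\in\tilde H_{\rho n}$, the weighted-polynomial maximum principle (Lemma \ref{BAHM}) together with the off-diagonal decay shows that the contribution of the collar $S^+\setminus\overline\D$ to $\int_{S^+}|f|^2$ is $O(n^{-1})\|f\|^2$ and can be absorbed into the constant. For the sampling direction on $\overline\D$: since $\mathcal{F}_n\subset\overline\D$ has density $n$ per unit of $\dA$ while $\tilde H_{\rho n}$ needs only density $\rho n<n$, a greedy thinning yields a sub-family $\mathcal{F}_n''\subset\mathcal{F}_n$ with density $\asymp\rho n$ and mutual separation $\asymp 1/\sqrt{\rho n}$. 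A Marcinkiewicz--Zygmund inequality for $\tilde H_{\rho n}$, derived from the kernel bounds above exactly as in \cite{MO}, \cite{OP} but now valid up to $\partial\D$, gives $\int_{\overline\D}|f|^2\lesssim\tfrac1{\rho n}\sum_{z\in\mathcal{F}_n''}|f(z)|^2\leq\tfrac1{\rho n}\sum_{z\in\mathcal{F}_n}|f(z)|^2$.

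\textbf{Main obstacle.} The crux is the uniform treatment of Fekete points in the boundary layer $\{|z|\leq 1,\,\sqrt n(1-|z|)=O(1)\}$, which in the general setting of Lemma \ref{mt2} forced the restriction to $S_n$ and $\mathcal{F}'$. In this layer $\mathbf{K}_{\rho n}(z,z)$ drops by up to a factor of $2$ through the $\erfc$ factor, so both Gram-matrix diagonal dominance and the Marcinkiewicz--Zygmund sampling constants must be shown to survive this bounded but non-unit asymmetry. Verifying that the cited Ginibre asymptotics close the argument uniformly in this boundary layer is the technical point, and is precisely where the explicit exponential-truncation formula for $\mathbf{K}_n$ is indispensable.
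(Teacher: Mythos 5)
There is a genuine gap, and it occurs at the two places where your sketch replaces the Fekete (extremality) property by kernel estimates plus separation. First, the asserted global Gaussian off-diagonal bound $|\mathbf{K}_n(z,w)|\le C\sqrt{\mathbf{K}_n(z,z)\mathbf{K}_n(w,w)}\,\e^{-cn|z-w|^2}$ "throughout $\C$'' is false near $\partial\D$: rescaling at a boundary point the Ginibre kernel tends to $\mathbf{F}(z,w)=\e^{z\bar w-|z|^2/2-|w|^2/2}\Phi(-z-\bar w)$, and by the Dawson-function estimate (Lemma \ref{Dawsonne}) $|\mathbf{F}|$ decays only like the reciprocal of the tangential separation; the correct uniform statement is the first-order polynomial decay $|\mathbf{K}_n(z,w)|\le Cn/(1+\sqrt n|z-w|)$ of Proposition \ref{zick}. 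With this decay the Schur-test row sums of your normalised Gram matrix over a $1/\sqrt n$-separated planar set grow linearly in the truncation radius, so the claimed bound $\|G^{-1}\|\lesssim 1/(\rho n)$ does not follow; and even if Gaussian decay held, separation alone gives off-diagonal row sums that are bounded but not small, and the margin $\rho>1$ supplies no mechanism to push them below $1$ (it only rescales lengths by $\sqrt\rho$). Indeed your interpolation argument uses nothing about $\mathcal F_n$ beyond separation, yet a merely separated family can have upper Beurling--Landau density far above $\rho$ and then cannot be $\rho$-interpolating (Lemma \ref{h11}(2)), so some use of extremality is unavoidable.

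The sampling half has the same defect in a circular form: "greedy thinning'' to a subfamily of density $\asymp\rho n$ presupposes that $\mathcal F_n$ has no holes at scale $1/\sqrt n$, i.e. essentially the local equidistribution that Lemma \ref{h12} is being used to prove; separation gives only an upper local count, and a Marcinkiewicz--Zygmund inequality fails on any family with a large hole. The paper instead exploits the Fekete property directly: the weighted Lagrange functions $\ell_j$ satisfy $|\ell_j|\le 1$ on $\C$ (Leja--Siciak function and Lemma \ref{BAHM}), one multiplies by the \emph{cube} of the normalized peak kernel $\mathbf{K}_{\eps n}(\cdot,z_j)/\mathbf{K}_{\eps n}(z_j,z_j)$ — the third power being exactly what compensates the slow $1/(1+\sqrt{n\eps}|z-z_j|)$ decay of Proposition \ref{zick} and the lower bound $\mathbf{K}_{\eps n}(z,z)\ge c n\eps$ on $\D^+$ (Remark \ref{lowb}) — and then gets the $\ell^2\to L^2$ bound $C/\sqrt n$ by Riesz--Thorin interpolation between an $\ell^1\to L^1$ bound (reproducing-kernel identity) and an $\ell^\infty\to L^\infty$ bound (separation plus the maximum principle to pass from $\D$ to $\C$); the sampling direction uses the exact Lagrange reproduction $f(z)=\sum_j f(z_j)\tilde L_j(z)$ of $g_z=f\cdot(\text{peak})^3\in\tilde H_n$ at the $n$ Fekete points, with no thinning. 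To repair your proposal you would have to replace the Gaussian decay by the true boundary decay, abandon the Gram-inversion and thinning steps, and bring the bound $|\ell_j|\le1$ (or some equivalent consequence of maximality of the Vandermonde determinant) into both halves of the argument.
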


A proof is given in \textsection\ref{blala}.

To finish the proof of Theorem \ref{MTH2} it suffices to combine Lemma \ref{h11} and Lemma \ref{h12}. q.e.d.

\subsection{Auxiliary lemmas} We state a couple of known facts which are used frequently in the following.
The following uniform estimate is well-known (see e.g. \cite{ST}).

\begin{lem} \label{BAHM} Let $f\in \tilde{H}_n$ and $z\in \C\setminus S$. Assume that $\babs{f}\le 1$ on $S$. Then
$\babs{f(z)}\le \e^{-n\lpar Q(z)-\wh{Q}(z)\rpar/2},\quad z\in \C.$
\end{lem}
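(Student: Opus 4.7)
The plan is to prove the inequality by a Phragm\'en--Lindel\"of type maximum-principle argument. Write $f = p \cdot \e^{-nQ/2}$ with $p$ a polynomial of degree at most $n-1$, and set
\[
v(z) = \frac{2}{n}\log\babs{p(z)},
\]
which is subharmonic on $\C$ and satisfies the growth estimate $v(z) \le (2 - 2/n)\log\babs{z} + O(1)$ as $z \to \infty$. The hypothesis $\babs{f}\le 1$ on $S$ translates to $v\le Q$ on $S$, and because $Q = \wh{Q}$ on the droplet we in fact have $v \le \wh{Q}$ there. The conclusion, $\babs{p(z)}\le \e^{n\wh{Q}(z)/2}$, is equivalent to $v\le \wh{Q}$ on all of $\C$, so this is the target inequality.

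The main step is a maximum principle on the open set $\Omega = \C \setminus S$. Standard weighted potential theory (Saff--Totik) gives that $\wh{Q}$ is harmonic on $\Omega$, since the equilibrium measure $\sigma$ is supported in $S$, and that it has the asymptotics $\wh{Q}(z) = \log\babs{z}^2 + \gamma + o(1)$ at infinity, where $\gamma$ is the Robin constant. Consequently $h := v - \wh{Q}$ is subharmonic on $\Omega$, and the growth comparison gives
\[
h(z) \le -\frac{2}{n}\log\babs{z} + O(1) \longrightarrow -\infty \qquad \text{as } z \to \infty.
\]
Since $\d S \subset S$, the first paragraph yields $h \le 0$ on $\d S$. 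The maximum principle for subharmonic functions on an unbounded domain, with boundary values $\le 0$ and value $-\infty$ at infinity, then forces $h \le 0$ on $\Omega$, hence $v \le \wh{Q}$ on all of $\C$. Exponentiating and multiplying by $\e^{-nQ(z)/2}$ gives precisely the asserted bound on $\babs{f(z)}$.

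The only delicate point is that one must justify that $h$ is upper semicontinuous up to $\d S$, so that the boundary inequality can be invoked without passing to an upper semicontinuous regularization of $\wh{Q}$. Under our standing $C^3$-smoothness and strict subharmonicity assumptions on $Q$ in a neighbourhood of $S$, it is known that $\wh{Q}$ extends continuously (indeed, $C^{1,1}$-smoothly) across $\d S$, so no such difficulty arises. I expect this boundary regularity issue to be the main technical point; once it is settled, the argument is a direct application of the Phragm\'en--Lindel\"of principle.
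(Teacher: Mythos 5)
Your proof is correct. It differs in route from the paper's: the paper disposes of the lemma in one line by viewing $\frac 1n\log\babs{p}^2$ as a subharmonic function of logarithmic growth dominated by $Q$ and invoking the envelope definition of $\wh{Q}$, whereas you prove the domination off the droplet directly, using that $\wh{Q}$ is harmonic on $\C\setminus S$ with the asymptotics $\log\babs{z}^2+O(1)$, and running a Phragm\'en--Lindel\"of argument for $h=v-\wh{Q}$ with boundary values $\le 0$ on $\d S$ and $h\to-\infty$ at infinity (the latter thanks to the degree deficit $\deg p\le n-1$). This buys something real: the hypothesis only gives $v\le Q$ on $S$, while the envelope definition of $\wh{Q}$ as stated requires the minorant property on all of $\C$, so the paper's one-liner implicitly relies on the standard maximum principle for weighted polynomials (Saff--Totik, Thm.~III.2.1); your argument supplies exactly that bridging step in a self-contained way. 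Your handling of the boundary regularity is also fine: under the standing $C^3$/strict subharmonicity assumptions $\wh{Q}$ is $C^{1,1}$, so $h$ has the required upper semicontinuity up to $\d S$ (and even for merely admissible weights one could instead exhaust $\C\setminus S$ and use that $h$ is bounded above, or quote the principle of domination). Two cosmetic remarks: the decay at infinity is not strictly needed --- boundedness above of $h$ plus the extended maximum principle on a domain whose complement has positive capacity would do --- and the maximum principle should be applied on each component of $\C\setminus S$ (or on $\Omega\cap D(0;R)$ with $R\to\infty$), which your argument accommodates without change.
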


(Proof: Let $f=p\cdot \e^{-nQ/2}$. The assumption gives that $\frac 1 n \log \babs{p}^2$ is a subharmonic minorant of $Q$
which grows no faster than $\log\babs{z}^2+\text{const.}$ as $z\to \infty$. Thus $\frac 1 n \log\babs{p}^2\le \wh{Q}$.)

We will also use the following well-known pointwise-$L^2$ estimate.

\begin{lem} \label{subh} Let $f=u\e^{-nQ/2}$ where $u$ is holomorphic and bounded in $D(z_0;c/\sqrt{n})$ for some $c>0$.
Suppose that $\Delta Q(z)\le K$ for all
$z\in D(z_0;c/\sqrt{n})$.
Then
\begin{equation}\label{voms}\babs{f(z_0)}^2\le n\cdot \e^{Kc^2}c^{-2}\int_{D(z_0;c/\sqrt{n})}\babs{f}^2 \dA.\end{equation}
In particular, if $\mathcal{Z}$ is $2s$-separated, then for all $f\in\tilde{H}_n$
\begin{equation}\label{oms}\frac 1 {n} \sum_{z_{nj}\in \Omega}\babs{f(z_{nj})}^2\le C s^{-2}\int_{\Omega^+}\babs{f(\zeta)}^2\dA(\zeta),\end{equation}
where $C$ depends only on the upper bound of $\Delta Q$ on $S^+$ and
$\Omega^+=\left\{\zeta\in\C;~\dist(\zeta,\Omega)\le s/\sqrt{n}\right\}.$
\end{lem}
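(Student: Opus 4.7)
The plan is to establish the pointwise bound \eqref{voms} via the sub-mean value inequality applied to a carefully chosen auxiliary subharmonic function, and then to deduce \eqref{oms} by summing \eqref{voms} over disjoint disks centered at the lattice points.

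For \eqref{voms}, the natural idea is to absorb the non-subharmonicity of $-nQ$ into a compensating Gaussian-type weight. On the disk $D_0 := D(z_0; c/\sqrt{n})$, I would consider the auxiliary function
\[
g(z) := \babs{f(z)}^2\exp\bigl(nK\babs{z-z_0}^2\bigr) = \babs{u(z)}^2\exp\bigl(-nQ(z)+nK\babs{z-z_0}^2\bigr).
\]
Since $u$ is holomorphic, $\log\babs{u}^2$ is subharmonic on $D_0$ (in the extended sense, taking value $-\infty$ at zeros of $u$), and using $\Delta\babs{z-z_0}^2 = 1$ together with the hypothesis $\Delta Q\le K$ on $D_0$, one obtains
\[
\Delta\log g \,=\, \Delta\log\babs{u}^2 - n\Delta Q + nK \,\ge\, 0
\]
as a distribution. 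Hence $\log g$ is subharmonic on $D_0$, and so is $g = \exp(\log g)$. The standard sub-mean value inequality at the center (recalling that with the normalization $\dA = \diff x\diff y/\pi$ the disk $D_0$ has $\dA$-area $c^2/n$) gives
\[
\babs{f(z_0)}^2 = g(z_0) \,\le\, \frac{n}{c^2}\int_{D_0}\babs{f}^2 e^{nK\babs{z-z_0}^2}\dA \,\le\, \frac{n\,e^{Kc^2}}{c^2}\int_{D_0}\babs{f}^2\dA,
\]
since $nK\babs{z-z_0}^2 \le Kc^2$ throughout $D_0$. This is precisely \eqref{voms}.

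For \eqref{oms}, I would take $K$ to be the supremum of $\Delta Q$ on the compact set $S^+$ (finite, since $Q$ is $C^3$ in a neighbourhood of $S$) and apply \eqref{voms} with $c := s$ at each $z_{nj} \in \Omega \cap \mathcal{Z}_n$. The assumed $2s$-separation of $\mathcal{Z}$ makes the disks $D(z_{nj};s/\sqrt{n})$ pairwise disjoint, and by the very definition of $\Omega^+$ these disks all lie inside $\Omega^+$. Summing the pointwise estimates and replacing the sum of integrals over disjoint disks by a single integral over $\Omega^+$ then yields \eqref{oms} with $C = e^{Ks^2}$. I expect no serious obstacle in this argument; the only mildly delicate step is verifying that $\log g$ remains subharmonic across the zero set of $u$, but this is automatic from the extended-sense subharmonicity of $\log\babs{u}^2$ together with the stability of subharmonicity under addition of smooth functions with nonnegative Laplacian and under exponentiation.
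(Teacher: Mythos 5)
Your proof is correct and is essentially the argument the paper delegates to the literature: for \eqref{voms} the paper only cites \cite{AHM}, Section 3, where the same device of compensating $-nQ$ by the Gaussian factor $\e^{nK|z-z_0|^2}$ to get a (log-)subharmonic function and applying the area sub-mean value inequality is used, and your summation of \eqref{voms} with $c=s$ over the pairwise disjoint disks $D(z_{nj};s/\sqrt{n})\subset\Omega^+$ is exactly the ``immediate'' deduction of \eqref{oms} intended there. The normalization details ($\Delta|z-z_0|^2=1$ and the disk of radius $c/\sqrt{n}$ having $\dA$-measure $c^2/n$) check out, so nothing is missing.
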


A proof of \eqref{voms} can be found e.g. in \cite{AHM}, Section 3. The estimate \eqref{oms} is immediate from this.

We will also need the following lemma on uniform estimates and "off-diagonal damping'' for correlation kernels.

\begin{lem}\label{cor8.2} (i) There is a constant $C$ such that for all $z,w\in \C$,
\begin{equation*}\babs{\mathbf{K}_{n}(z,w)}\le Cn\e^{-n(Q(z)-\wh{Q}(z))/2}
\e^{-n(Q(w)-\wh{Q}(w))/2}.\end{equation*}
(ii) Suppose that $z\in S$ and let $\delta=\dist(z,\d S)$. There are then positive constants
$C$ and $c$ such that
$$\babs{\mathbf{K}_n(z,w)}\le Cn\exp\lpar -c\sqrt{n}\min\{\babs{z-w},\delta\}\rpar\cdot \e^{-n(Q(w)-\wh{Q}(w))/2},\quad
w\in \C.$$
\end{lem}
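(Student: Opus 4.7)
The first bound (i) reduces, via the Cauchy--Schwarz inequality $|\mathbf{K}_n(z,w)|^2 \le \mathbf{K}_n(z,z)\,\mathbf{K}_n(w,w)$ (valid for any positive-definite reproducing kernel), to proving the diagonal estimate $\mathbf{K}_n(z,z) \le Cn\, e^{-n(Q(z)-\widehat{Q}(z))}$ for all $z \in \C$. Using the extremal characterization $\mathbf{K}_n(z,z) = \sup\{|f(z)|^2 : f \in \tilde{H}_n,\ \|f\|_{L^2} \le 1\}$, I would first establish $\mathbf{K}_n(z,z) \le Cn$ uniformly on a neighborhood of $S$: this is immediate from Lemma \ref{subh} applied to a unit-norm $f$ on a disk of radius $c/\sqrt n$ around $z$, where $\Delta Q$ is bounded. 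For $z$ outside such a neighborhood, the maximum principle for weighted polynomials (Lemma \ref{BAHM}) applied to $f/\sqrt{Cn}$, whose modulus is at most $1$ on $S$, supplies the additional factor $e^{-n(Q(z)-\widehat{Q}(z))/2}$. Cauchy--Schwarz then closes part (i).

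For part (ii), I would split the estimate into three regimes. Case A is the bound $|\mathbf{K}_n(z,w)| \le Cn\, e^{-c\sqrt n\,|z-w|}$ when $w \in S$ and $|z-w| \le \delta/2$; this is the heart of the lemma. I would prove it by the weighted $\dbar$-method: fix $z$, let $\chi$ be a smooth cutoff that equals $1$ on $D(z; r/\sqrt n)$ for a small constant $r$, and build a local model $F$ for the reproducing kernel near $z$ by starting from the Fock/Ginibre kernel associated to the osculating paraboloid of $nQ$ at $z$. Then correct $\chi F$ to an honest element of $\tilde{H}_n$ by solving a $\dbar$-equation via Hörmander's $L^2$-estimate with weight $\varphi(\xi) = nQ(\xi) + c\sqrt n\,|\xi - z|$. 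The strict subharmonicity of $Q$ near $S$ keeps $\imag\partial\dbar\varphi$ positive, which translates into the claimed exponential decay of the corrector.

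Case B ($w \in S$, $|z-w| > \delta/2$): applying Case A at the point $w'$ lying on the segment from $z$ to $w$ with $|z-w'| = \delta/2$ already gives $|\mathbf{K}_n(z,w')| \le Cn\, e^{-c\sqrt n\,\delta/2}$, which matches the target since $\min(|z-w|,\delta) = \delta$ in this regime; for $w$ further from $z$ but still in $S$, the anti-holomorphic weighted polynomial $w \mapsto \mathbf{K}_n(z,w)\,e^{nQ(w)/2}$ can be controlled by combining Case A with the uniform bound from (i). Case C ($w \notin S$): fix $z$ and apply Lemma \ref{BAHM} to $w \mapsto \mathbf{K}_n(z,w)$ viewed as an anti-holomorphic weighted polynomial of degree $<n$ in $w$; its modulus on $S$ is already dominated by the Case A/B bound, and the maximum principle transports this to $w \notin S$ while producing the factor $e^{-n(Q(w)-\widehat{Q}(w))/2}$.

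The main obstacle is Case A: constructing an accurate local model for $\mathbf{K}_n$ and controlling the $\dbar$-correction tightly enough that the error does not spoil the exponential factor $e^{-c\sqrt n\,|z-w|}$. The delicate point is the joint calibration of the weight $\varphi$, the cutoff scale $r/\sqrt n$, and the distance $\delta$ to $\d S$, so that the Hörmander estimate absorbs the error into the decay. This is precisely the technical content developed in \cite{B}, \cite{A}, \cite{AHM}, \cite{AHM3}, which I would invoke to close out the argument rather than reprove here.
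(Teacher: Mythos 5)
Your treatment of part (i) is fine and is essentially the standard argument: Cauchy--Schwarz reduces the bound to the diagonal, Lemma \ref{subh} gives $\mathbf{K}_n(z,z)\le Cn$ on a neighbourhood of $S$ where $Q$ is smooth, and Lemma \ref{BAHM} applied to a normalized extremal function propagates the estimate off $S$. (For the record, the paper does not prove this lemma at all: it cites \cite{AHM}, Section 3, for (i) and \cite{AHM}, Corollary 8.2, for (ii).)

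For part (ii), however, your reduction to a local near-diagonal estimate (Case A) plus maximum-principle propagation (Cases B and C) has a genuine gap. In Case C you apply Lemma \ref{BAHM} to $w\mapsto \mathbf{K}_n(z,w)$, but that lemma compares against a \emph{constant} majorant: what it transports off $S$ is the supremum of $\babs{\mathbf{K}_n(z,\cdot)}$ over all of $S$, which is of order $n$ (attained near $w=z$), not $n\e^{-c\sqrt{n}\delta}$. So Case C only recovers the part (i) bound for $w\notin S$ and loses the damping factor $\e^{-c\sqrt{n}\delta}$, which is genuinely needed there: for $w\notin S$ one has $\min\{\babs{z-w},\delta\}=\delta$, while $\e^{-n(Q(w)-\wh{Q}(w))/2}$ is close to $1$ for $w$ just outside $S$ and cannot compensate. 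Case B is likewise unsupported: smallness of $\mathbf{K}_n(z,w')$ at an intermediate point $w'$ on the segment says nothing about the value at $w$, and "combining Case A with the uniform bound from (i)" provides no mechanism for decay at points $w\in S$ with $\babs{z-w}\ge\delta$. The decay at the capped rate $\e^{-c\sqrt{n}\delta}$ for \emph{all} distant $w$, inside or outside $S$, is precisely the nontrivial global content of the lemma; in \cite{AHM} it is obtained by building the decay into a global weighted estimate, perturbing the weight by the capped Lipschitz function $c\sqrt{n}\min\{\babs{\xi-z},\delta\}$, whose non-harmonic part is supported in $D(z;\delta)\subset S$ where $n\Delta Q\gtrsim n$ absorbs it, and comparing with $n\wh{Q}$ outside $S$ to respect the degree-$(n-1)$ constraint -- this is where both the cap at $\delta$ and the factor $\e^{-n(Q(w)-\wh{Q}(w))/2}$ originate. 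Your uncapped weight $nQ(\xi)+c\sqrt{n}\babs{\xi-z}$ sidesteps both issues: $Q$ is only assumed smooth and strictly subharmonic near $S$, and finiteness of the Hörmander solution against $\e^{-nQ}$ does not force the corrected function to lie in $\tilde{H}_n$. Since you end by invoking \cite{AHM} anyway, the honest conclusion is that part (ii) is being quoted -- which is what the paper itself does -- rather than derived from the local estimate as your case analysis suggests.
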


Part (i) is standard, see e.g. \cite{AHM}, Sect. 3.
For a proof of (ii) we refer to \cite{AHM}, Corollary 8.2 (which also shows that the constant $c$ can be taken proportional to
$\inf\{\sqrt{\Delta Q(z)};~z\in S\}$).

\subsection{Notation} We use the same letter $\mathbf{K}$ to denote a kernel $\mathbf{K}(z,w)$ and its corresponding integral operator $\mathbf{K}(f)(z)=\int_\C f(w)\mathbf{K}(z,w)\dA(w)$. We will denote by the same symbol "$C$'' a constant independent of $n$, which can change meaning as we go along. The notation "$A_n\precsim B_n$'' means that $A_n\le CB_n$.
We shall write
\begin{equation}\label{an's}A_n(z)=D(z;R/\sqrt{n})\quad ,\quad A_n^+(z)=D(z;(R+s)/\sqrt{n})\quad ,\quad
A_n^-(z)=D(z;(R-s)/\sqrt{n}).\end{equation}

\section{Preliminary estimates}\label{unisep}

In this section, we discuss gradient estimates for weighted polynomials; these will be useful in the following. In particular they
imply that interpolating families are uniformly separated.

\subsection{Inequalities of Bernstein type} The following lemma is analogous to Lemma 18 in \cite{MMO}.

\begin{lem}\label{bernstein} Let $p$ be a polynomial of degree at most $n$. Fix a point $z$ such that
$p(z)\ne 0$ and $\babs{\Delta Q(z)}< K$. Then
\begin{equation}\label{babel}\babs{\nabla \lpar \babs{p}\e^{-nQ/2}\rpar(z)}\le C\sqrt{n}\left\|p\e^{-nQ/2}\right\|_{L^\infty},\end{equation}
and
\begin{equation}\label{abel}\babs{\nabla\lpar \babs{p}\e^{-nQ/2}\rpar(z)}\le C n\left\|p\e^{-nQ/2}\right\|_{L^2},\end{equation}
where the constant $C$ depends only on $K$.
\end{lem}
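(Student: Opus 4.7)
The plan is to reduce both inequalities to a Cauchy-type estimate applied on a disk of radius $c/\sqrt{n}$ centered at $z$, after stripping off from $p\,e^{-nQ/2}$ a purely ``holomorphic'' piece of the weight. Concretely, I would Taylor expand $Q$ about $z$ to second order,
\[
Q(w)=Q(z)+2\re\!\bigl[\partial Q(z)(w-z)\bigr]+\re\!\bigl[\partial^{2}Q(z)(w-z)^{2}\bigr]+\Delta Q(z)\,|w-z|^{2}+O(|w-z|^{3}),
\]
and form the holomorphic auxiliary function
\[
H(w)\;=\;p(w)\exp\!\Bigl(-\tfrac{n}{2}\bigl[\,Q(z)+2\,\partial Q(z)(w-z)+\partial^{2}Q(z)(w-z)^{2}\,\bigr]\Bigr).
\]
By construction $|H(z)|=|f(z)|$ where $f=p\,e^{-nQ/2}$, and for $w$ in $D(z;c/\sqrt n)$ the ratio $|f(w)|/|H(w)|=\exp\!\bigl(-\tfrac{n}{2}\Delta Q(z)|w-z|^{2}+O(n|w-z|^{3})\bigr)$ is bounded above and below by constants depending only on $K$ and $c$ (since $n|w-z|^{2}\le c^{2}$ and $n|w-z|^{3}\le c^{3}/\sqrt n$ on that disk).

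For \eqref{babel}, I would apply the Cauchy estimate to $H$ on $D(z;c/\sqrt n)$ to get $|H'(z)|\precsim (\sqrt n/c)\sup_{|w-z|=c/\sqrt n}|H(w)|\precsim \sqrt n\,\|f\|_{L^{\infty}}$, with constant depending only on $K$. The final step is to identify $|\nabla|f|(z)|$ with $|H'(z)|$: writing $|f(w)|=|H(w)|\,\phi(w)$ with $\phi(w)=\exp\!\bigl(-\tfrac{n}{2}\Delta Q(z)|w-z|^{2}+O(n|w-z|^{3})\bigr)$, one has $\phi(z)=1$ and $\nabla\phi(z)=0$ because the exponent vanishes to order $2$ at $z$; hence $\nabla|f|(z)=\nabla|H|(z)$. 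Since $H$ is holomorphic and $H(z)\neq 0$ (as $p(z)\neq 0$), a standard computation via the Cauchy--Riemann equations yields $|\nabla|H|(z)|=|H'(z)|$, completing the proof of \eqref{babel}.

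For \eqref{abel}, I would combine \eqref{babel} with the pointwise-$L^{2}$ estimate of Lemma \ref{subh}. Applied at every point of $D(z;c/\sqrt n)$, that lemma gives $\|f\|_{L^{\infty}(D(z;c/\sqrt n))}\precsim \sqrt n\,\|f\|_{L^{2}}$ with constant depending only on $K$. Plugging this local $L^{\infty}$ bound in place of the global one on the right-hand side of the proof of \eqref{babel} (where only the sup of $|f|$ on the circle $|w-z|=c/\sqrt n$ was used) yields $|\nabla|f|(z)|\precsim \sqrt n\cdot\sqrt n\,\|f\|_{L^{2}}=n\,\|f\|_{L^{2}}$.

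The only delicate point is the second step above, namely the elementary but crucial observation that the ``Gaussian defect'' $\phi$ between $|f|$ and $|H|$ is quadratically flat at $z$ so contributes nothing to the gradient there; everything else is Cauchy's estimate for a holomorphic function on a disk of radius proportional to $1/\sqrt n$, which is precisely the source of the scale $\sqrt n$ in the Bernstein-type bound.
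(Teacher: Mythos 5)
Your proposal is correct and follows essentially the same route as the paper: strip off the holomorphic second-order Taylor part $H_z$ of the weight, apply Cauchy's estimate on a disk of radius $\sim 1/\sqrt n$ (with the ratio $e^{-n(Q-h_z)/2}$ bounded there), and invoke Lemma \ref{subh} to convert the local sup into an $L^2$ bound for \eqref{abel}. The only cosmetic difference is that the paper identifies $\nabla(|p|e^{-nQ/2})(z)$ with the holomorphic derivative via the explicit formula $|p'-n\,\d Q\cdot p|e^{-nQ/2}$ evaluated at $\zeta=z$, whereas you argue via the quadratic flatness of the Gaussian defect $\phi$ at $z$; both are valid.
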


\begin{proof} Let
$H_z(\zeta)=Q(z)+2\d Q(z)\cdot (\zeta-z)+\d^2 Q(z)(\zeta-z)^2$ and $h_z(\zeta)=\re H_z(\zeta),$
so that
$$Q(\zeta)=h_z(\zeta)+\Delta Q(z) \babs{\zeta-z}^2+O(\babs{z-\zeta}^3).$$
In particular, there is a constant $C$ such that
\begin{equation}\label{ont}n\babs{Q(\zeta)-h_z(\zeta)}\le C\quad \text{when}\quad \babs{\zeta-z}\le 1/\sqrt{n},\end{equation}
where $C$ depends only on $K$.

Now observe that,
\begin{equation}\label{dont}\babs{\nabla\lpar \babs{p}\e^{-nQ/2}\rpar(\zeta)}=\babs{p^\prime(\zeta)-n\cdot \d Q(\zeta)\cdot p(\zeta)}\e^{-nQ(\zeta)/2},\end{equation}
and
\begin{equation}\label{font}\begin{split}\babs{\nabla\lpar \babs{p}\e^{-nh_z/2}\rpar(\zeta)}&=\babs{p^\prime(\zeta)-n\cdot \d h_z(\zeta)
\cdot p(\zeta)}\e^{-nh_z(\zeta)/2}=\\
&=\babs{\frac \diff {\diff \zeta}(p\e^{-nH_z/2})(\zeta)}.\\
\end{split}
\end{equation}
The expressions \eqref{dont} and \eqref{font} are identical when $\zeta=z$.

By Cauchy's estimate applied to the circle $C_{1/\sqrt{n}}(z)$ with center $z$ and radius $1/\sqrt{n}$,
\begin{equation}\babs{\frac \diff {\diff \zeta}(p\e^{-nH_z/2})(z)}=\frac 1 {2\pi}\babs{\int_{C_{1/\sqrt{n}}(z)}
\frac {p(\zeta)\e^{-nH_z(\zeta)/2}} {(z-\zeta)^2} \diff \zeta}\le \frac n {2\pi}
\int_{C_{1/\sqrt{n}}(z)}\babs{p(\zeta)}\e^{-nh_z(\zeta)/2}\babs{\diff \zeta}.\end{equation}
In view of \eqref{ont}, the right side can be estimated by a constant depending only on $K$, times
\begin{equation}\label{sabel}n\int_{C_{1/\sqrt{n}}(z)}\babs{p(\zeta)}\e^{-nQ(\zeta)/2}\babs{\diff \zeta}.\end{equation}

To prove \eqref{babel}, it suffices to notice that \eqref{sabel} can be estimated by $2\pi\sqrt{n}\left\| p\e^{-nQ/2}\right\|_{L^\infty}$.

Next notice that, by Lemma \ref{subh},
$$\babs{p(\zeta)}^2\e^{-nQ(\zeta)}\le C'n\int_{D(\zeta;1/\sqrt{n})}\babs{p(\xi)}^2\e^{-n Q(\xi)}\dA(\xi)\le C'n
\left\|p\e^{-nQ/2}\right\|_{L^2}^2$$
with another constant $C'$ depending only on $K$.
We conclude that
\begin{equation*}\babs{\nabla\lpar \babs{p}\e^{-nQ/2}\rpar(\zeta)}\precsim n\sqrt{n}\left\|p\e^{-nQ/2}\right\|_{L^2}
\int_{C_{1/\sqrt{n}}(z)}\babs{\diff\zeta}\precsim n\left\|p\e^{-nQ/2}\right\|_{L^2},\end{equation*}
with a constant depending only on $K$. This proves \eqref{abel}.
\end{proof}

\subsection{Proof of Lemma \ref{BASE0}}\label{BASE1} Let $\mathcal{Z}$ be an interpolating family contained in $S$. (W.l.o.g. put $\rho=1$.)

Fix an index $j$, $1\le j\le m_n$.
Since $\mathcal{Z}$ is interpolating, we can find a function $f=f_n\in\tilde{H}_{n}$ such that $f(z_{nj'})=\delta_{jj'}$ and
$\|f\|^2\le C/n$. Let $\delta>0$ be small enough that $D(z_{nj};\delta)\subset \Lambda$.
Also assume w.l.o.g. that a point $z_{nj'}$ satisfies $\babs{z_{nj}-z_{nj'}}<\delta$; if there is no such $j'$ there is nothing to prove.

Evidently,
\begin{equation*}1=\babs{\babs{f(z_{nj})}-\babs{f(z_{nj'})}}\le \left\|\nabla \babs{f}\right\|_{L^\infty(\Lambda)}\babs{z_{nj}-z_{nj'}}.\end{equation*}
Thus Lemma \ref{bernstein} gives
$$1\le C_1n\left\|f\right\| \babs{z_{nj}-z_{nj'}}\le CC_1\sqrt{n}\babs{z_{nj}-z_{nj'}}.$$
This proves that $\mathcal{Z}$ is $s$-separated with $s=1/(CC_1)$. $\qed$

\section{The spectrum of the concentration operator} \label{conce}

Let $\Omega$ be a measurable subset of the plane. The \textit{concentration operator} $\mathbf{K}_{n\rho}^\Omega$
is defined by
$$\mathbf{K}_{n\rho}^\Omega(f)(z)=\int_\Omega f(w)\mathbf{K}_{n\rho}(z,w)\dA(w)=\mathbf{K}_{n\rho}(\1_\Omega\cdot f)(z).$$
This is a positive contraction on $\tilde{H}_{n\rho}$.

In this section, we apply a technique which relates the spectrum of the concentration operator to the number of points
in $\Omega\cap\mathcal{Z}_n$ when $\mathcal{Z}$ is either an interpolating family or an M-family; the technique essentially
goes back to Landau's paper \cite{L}. We here follow the strategy in \cite{OP}, in a suitably modified form.

We first turn to $M$-families. We will consider the cases of $M_S$ and of $M_{S_n}$ families separately.

\subsection{$M_{S,\rho}$-families} Fix a point $z\in S$
and
let $\lambda_j^{n\rho}=\lambda_j^{n\rho}(z)$ denote the eigenvalues of
$\mathbf{K}_{n\rho}^{A_n(z)}:\tilde{H}_{n\rho}\to\tilde{H}_{n\rho}$, taken in decreasing order.
Let $\phi_j^{n\rho}$ be corresponding normalized eigenvectors.
We write
$$
N_{n\rho}^+=N_{n\rho}^+(z)=\#\lpar \mathcal{Z}_n\cap A_n^+(z)\rpar.$$
(See \eqref{an's} for the definitions of the sets $A_n$ and $A_n^+$.)
\begin{lem} \label{sepo} Suppose that $\mathcal{Z}\subset S$ is of class
$M_{S,\rho}$.
There is then a constant $\gamma<1$ and a number $n_0$ such that for all $z\in S$ and $n\ge n_0$, we have
$\lambda_{N_{n\rho}^+(z)+1}^{n\rho}< \gamma.$
\end{lem}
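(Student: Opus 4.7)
The plan is to argue by contradiction in the style of Landau~\cite{L}, following the adaptation in~\cite{OP}. Suppose, toward a contradiction, that for every $\gamma<1$ one can find arbitrarily large $n$ and a point $z\in S$ with $\lambda_{N+1}^{n\rho}(z)\ge\gamma$, where $N:=N_{n\rho}^+(z)$. Let $V\subset\tilde{H}_{n\rho}$ be the span of the top $N+1$ eigenvectors of $\mathbf{K}_{n\rho}^{A_n(z)}$. Since $\dim V=N+1$ strictly exceeds the number $N$ of points of $\mathcal{Z}_n$ in $A_n^+(z)$, a dimension count yields a unit vector $f\in V$ vanishing at every $z_{nj}\in\mathcal{Z}_n\cap A_n^+(z)$. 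Writing $f=\sum c_j\phi_j^{n\rho}$ with $\sum|c_j|^2=1$, the identity $\int_{A_n(z)}|f|^2=\langle\mathbf{K}_{n\rho}^{A_n(z)}f,f\rangle=\sum\lambda_j^{n\rho}|c_j|^2$ gives $\int_{A_n(z)}|f|^2\ge\gamma$, and hence $\int_{\C\setminus A_n(z)}|f|^2\le 1-\gamma$.

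Apply the $M_{S,\rho}$ sampling inequality of Definition~\ref{mdef0} to $f$; only the sample points outside $A_n^+(z)$ contribute. For each such $z_{nj}$ the disk $D(z_{nj},s/\sqrt{n})$ is disjoint from $A_n(z)$ (by the definition of $A_n^+$), and $2s$-separation of $\mathcal{Z}$ ensures these disks are pairwise disjoint, so Lemma~\ref{subh} summed over $j$ gives
\[
\sum_{z_{nj}\notin A_n^+(z)}|f(z_{nj})|^2\le Cn\int_{\C\setminus A_n(z)}|f|^2\le Cn(1-\gamma),
\]
whence $\int_{S^+}|f|^2\le C(1-\gamma)/\rho$.

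For a matching lower bound I split $\C\setminus S^+$ into $(\C\setminus S^+)\setminus A_n^+(z)$ and $A_n^+(z)\setminus S^+$. On the first set, every $w$ satisfies $D(w,s/\sqrt{n})\cap A_n(z)=\emptyset$, so the concentration-based version of Lemma~\ref{subh} gives $|f(w)|^2\le Cn(1-\gamma)$ and the integral contributes at most $1-\gamma$. On the second set $w$ lies outside $S$ at distance $\ge s/\sqrt{n}$; here the pointwise estimate $|f(w)|^2\le\mathbf{K}_{n\rho}(w,w)$ combined with Lemma~\ref{cor8.2}(i) and the quadratic growth $Q-\wh{Q}\gtrsim\dist(\cdot,S)^2$ (valid by strict subharmonicity and $C^3$-smoothness of $Q$ near $S$) bounds $\int_{A_n^+(z)\setminus S^+}|f|^2\le\eta$ for a constant $\eta=\eta(s,R,\rho,Q)$ independent of $n$. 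Thus $\int_{S^+}|f|^2\ge\gamma-(1-\gamma)-\eta$, and combined with the sampling bound this gives $\gamma-(1-\gamma)-\eta\le C(1-\gamma)/\rho$, which pushes $\gamma$ below a constant strictly less than $1$ provided $\eta$ is sufficiently small.

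The main technical obstacle is this last step: producing a bound $\eta<1$ on $\int_{A_n^+(z)\setminus S^+}|f|^2$ that is uniform in $z\in S$ and $n$, since the off-boundary region $A_n^+(z)\setminus S^+$ can be substantial when $z$ is close to $\partial S$. Overcoming it will require careful use of the off-diagonal decay of $\mathbf{K}_{n\rho}$ from Lemma~\ref{cor8.2} together with the geometry of $\partial S\cap A_n^+(z)$ and the Gaussian decay of $e^{-n\rho(Q-\wh{Q})}$ in a normal tubular neighborhood of $\partial S$.
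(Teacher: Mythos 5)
Your opening moves coincide with the paper's: the dimension count producing $f=\sum_{j=1}^{N+1}c_j\phi_j^{n\rho}$ vanishing on $\mathcal{Z}_n\cap A_n^+(z)$, the identity $\int_{A_n(z)}\babs{f}^2=\sum_j\lambda_j^{n\rho}\babs{c_j}^2$, and the remark that in the sampling inequality of Definition \ref{mdef0} only the points off $A_n^+(z)$ contribute, their disks $D(z_{nj};s/\sqrt{n})$ avoiding $A_n(z)$ so that \eqref{oms} applies. The genuine gap is the one you name yourself: the contradiction framing, pinned to $\gamma\to1$, forces you to complement the upper bound $\int_{S^+}\babs{f}^2\le C(1-\gamma)/\rho$ with a lower bound, i.e.\ with a bound $\eta<1$ on $\int_{A_n^+(z)\setminus S^+}\babs{f}^2$ uniform in $z\in S$ and $n$, and you do not prove it. The route you sketch cannot give it: $\babs{f(w)}^2\le\mathbf{K}_{n\rho}(w,w)\precsim n\e^{-cn\dist(w,S)^2}$ integrated over $A_n^+(z)\setminus S^+$ yields only $\eta\precsim (R+s)^2\e^{-cs^2}$, which exceeds $1$ once $R$ is large, so for large $R$ no contradiction is reached at all. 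Even granting some bound $\eta<1$, the threshold $\gamma$ you would extract depends on $R$ through $\eta$; but the lemma, and its use via \eqref{sep2p} in Section \ref{pmainr} and Lemma \ref{h11} where the factor $1/(1-\gamma)$ must stay bounded as $R\to\infty$ after $n\to\infty$, requires $\gamma$ independent of $R$ (only $n_0$ may depend on $R$, as in Lemma \ref{sep}). A small additional slip: your bound for $(\C\setminus S^+)\setminus A_n^+(z)$ does not come from integrating a pointwise estimate over that unbounded set; it is just the inclusion of that set in $\C\setminus A_n(z)$.

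The paper avoids your obstacle because it argues directly and never uses that the top eigenvalues are close to $1$. For the same $f$, the inequality \eqref{const3p} says that a fixed proportion $s^2/C$ of the mass of $f$ on $S^+$ must lie outside $A_n(z)$; hence $\int_{A_n(z)\cap S^+}\babs{f}^2\le(1-s^2/C)\left\|f\right\|^2$, and combined with $\lambda_{N_{n\rho}^++1}^{n\rho}\left\|f\right\|^2\le\sum_j\lambda_j^{n\rho}\babs{c_j}^2=\int_{A_n(z)}\babs{f}^2$ this gives the explicit value $\gamma=1-s^2/C$, independent of $R$ and of $z$, with no lower bound on $\int_{S^+}\babs{f}^2$ ever needed. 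The near-boundary region $A_n(z)\setminus S^+$ that blocks you is present in the direct argument as well (the displayed equality in the paper tacitly treats $A_n(z)$ as lying in $S^+$), but there one only needs its mass to be a small fraction of $\left\|f\right\|^2$ — harmless when $\rho<1$, the only case in which the lemma is invoked, since elements of $\tilde{H}_{n\rho}$ then localize well inside $S$ and carry exponentially small mass off $S^+$ — whereas your scheme demands the much stronger absolute bound $\eta<1$ uniformly in $R$. In short: same construction of $f$, but the contradiction format creates a requirement you cannot meet; the proportional-mass argument gives the uniform $\gamma$ directly.
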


\begin{proof} W.l.o.g. put $\rho=1$. Fix $z\in S$ and suppose that $f\in\tilde{H}_n$ is such that $f(z_{nj})=0$ when $z_{nj}\in A_n^+(z)$. Since $\mathcal{Z}$ is $2s$-separated,
\begin{equation}\label{const3p}\int_{S^+}\babs{f}^2\le C\frac 1 n \sum_{z_{nj}\in S\setminus A_n^+(z)} \babs{f(z_{nj})}^2\le Cs^{-2}
\int_{S^+\setminus A_n(z)}\babs{f}^2.\end{equation}

Now define
$f=\sum_{j=1}^{N_{n}^++1}c_j^n\phi_j^n,$
where the numbers $c_j^n$ (not all zero) are chosen so that $f(z_{nj})=0$ for all $z_{nj}\in A_n^+(z)$. This is possible
for all large $n$, because the separation of $\mathcal{Z}$ ensures that $N_n^+\le C$ for some constant $C=C(R,s)$.

Since the operator $\mathbf{K}_{n}$ is the orthogonal projection of $L^2$
onto $\tilde{H}_{n}$, we have
\begin{equation}\begin{split}\label{kna}\sum_{j=1}^{N_{n}^++1}\lambda_j^n\babs{c_j^n}^2&=
\left\langle \mathbf{K}_{n}^{A_n(z)}f,f\right\rangle=\left\langle \1_{A_n(z)}\cdot\mathbf{K}_{n}(f), \mathbf{K}_{n}(f)\right\rangle
=\int_{A_n(z)}\babs{f}^2\diff A.\\
\end{split}\end{equation}
We infer by means of \eqref{const3p} and \eqref{kna} that
\begin{equation*}\begin{split}\lambda_{N_{n}^++1}^n\sum_{j=1}^{N_{n}^++1}
\babs{c_j^n}^2&\le  \sum_{j=1}^{N_n^++1}\lambda_j^n\babs{c_j^n}^2=
\lpar \int_{S^+}-\int_{S^+ \setminus A_n(z)}\rpar \babs{f}^2\dA\\
&\le\lpar 1-\frac {s^2} {C}\rpar\int_{S^+}\babs{f}^2
\le \lpar 1-\frac {s^2} {C}\rpar\left\|f\right\|^2\le
\lpar 1-\frac {s^2} {C}\rpar\sum_{j=1}^{N_{n}^++1}\babs{c_j^n}^2,\\
\end{split}
\end{equation*}
which proves that $\lambda_{N_n^++1}\le 1-s^2/C$.
\end{proof}

Notice that the separability of $\mathcal{Z}$ implies that  $\#(\mathcal{Z}_n\cap(D(z;(R+s)/\sqrt{n}) \setminus D(z;R/\sqrt{n})))\le CR$. Therefore Lemma \ref{sepo} implies the estimate
\begin{equation}\label{sep2p}\#(\mathcal{Z}_n\cap D(z;R/\sqrt{n}))\ge \#\{j;~\lambda_j^{n\rho}\ge \gamma\}+O(R),\end{equation}
where the $O$-constant is independent of $n$.

\subsection{$M_{S_n,\rho}$-families} We now modify the construction in the previous subsection.

Recall that $S_n=\{z\in S;~\dist(z,\d S)\ge 2\delta_n\}$ where $\delta_n=\log^2 n/\sqrt{n}$.
Fix a sequence $z_n\in S_n$ satisfying
\begin{equation*}\dist(z_n,\d S)\ge 3\delta_n.\end{equation*}
 As before, we consider the eigenvalues $\lambda_j^{n\rho}$
 (decreasing order) and corresponding eigenfunctions $\phi_j^{n\rho}$ of the concentration operator $\mathbf{K}_{n\rho}^{A_n(z_n)}$.
We will use the following lemma.

\begin{lem}\label{chelp} For any positive integer $K$ there is a constant $C_K$ and a number $n_0=n_0(R)$ such that for all $j$
$$\lambda_j^{n\rho}\int_{\C\setminus S_n}\babs{\phi_j^{n\rho}}^2\le C_Kn^{-K},\quad n\ge n_0.$$
\end{lem}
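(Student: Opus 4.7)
The plan is to derive the decay of $\lambda_j^{n\rho}\int_{\C\setminus S_n}\babs{\phi_j^{n\rho}}^2$ by feeding the eigenfunction back into its own defining equation and exploiting the off-diagonal damping of $\mathbf{K}_{n\rho}$ supplied by Lemma \ref{cor8.2}(ii). Since $\phi_j^{n\rho}\in \tilde{H}_{n\rho}$, the eigenvalue equation
$$\lambda_j^{n\rho}\phi_j^{n\rho}(z)=\int_{A_n(z_n)}\phi_j^{n\rho}(w)\,\mathbf{K}_{n\rho}(z,w)\,\dA(w)$$
holds pointwise for every $z\in\C$ (both sides being weighted polynomials). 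The Cauchy--Schwarz inequality together with the identity
$$\int_{A_n(z_n)}\babs{\phi_j^{n\rho}}^2\,\dA=\langle \mathbf{K}_{n\rho}^{A_n(z_n)}\phi_j^{n\rho},\phi_j^{n\rho}\rangle=\lambda_j^{n\rho},$$
which uses $\|\phi_j^{n\rho}\|=1$ and the reproducing identity $\mathbf{K}_{n\rho}\phi_j^{n\rho}=\phi_j^{n\rho}$, then yields the key pointwise estimate
$$\lambda_j^{n\rho}\babs{\phi_j^{n\rho}(z)}^2\le \int_{A_n(z_n)}\babs{\mathbf{K}_{n\rho}(z,w)}^2\,\dA(w),\qquad z\in\C.$$

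Integrating in $z\in\C\setminus S_n$ and applying Tonelli reduces the lemma to a super-polynomial bound on the resulting double integral. To control $\babs{\mathbf{K}_{n\rho}(z,w)}$ in the relevant regime, I would invoke Lemma \ref{cor8.2}(ii) after exchanging the two arguments via Hermitian symmetry of the kernel, with the droplet point being $w\in A_n(z_n)$. The hypothesis $\dist(z_n,\d S)\ge 3\delta_n$ and the inequality $R/\sqrt n=o(\delta_n)$ ensure, for $n\ge n_0(R)$, that every $w\in A_n(z_n)$ lies in $S_n$ with $\dist(w,\d S)\ge 2\delta_n$. On the other hand, for $z\in\C\setminus S_n$ the triangle inequality (combined with the fact that any segment from $z_n\in S$ to $z\notin S$ must cross $\d S$) gives $|z-z_n|\ge \delta_n$, whence $|z-w|\ge \delta_n/2$. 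Consequently $\min\{|z-w|,\dist(w,\d S)\}\ge \delta_n/2$, and Lemma \ref{cor8.2}(ii) yields
$$\babs{\mathbf{K}_{n\rho}(z,w)}\le Cn\exp\lpar -\tfrac{c}{2}\sqrt n\,\delta_n\rpar\,\e^{-n\rho(Q(z)-\wh{Q}(z))/2}=Cn\exp\lpar -\tfrac{c}{2}\log^2 n\rpar\,\e^{-n\rho(Q(z)-\wh{Q}(z))/2}.$$

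To finish, I would observe that $\Area(A_n(z_n))=R^2/n$ and that $\int_\C \e^{-n\rho(Q-\wh Q)}\,\dA$ is uniformly bounded in $n\ge 1$: the integrand equals $1$ on $S$, while on $\C\setminus S$ it is dominated by the integrable function $\e^{-\rho(Q-\wh Q)}$, which is integrable thanks to the growth condition $Q(z)/\log|z|^2\to+\infty$ combined with $\wh{Q}(z)\le \log_+|z|^2+\mathrm{const}$. Assembling all the pieces,
$$\lambda_j^{n\rho}\int_{\C\setminus S_n}\babs{\phi_j^{n\rho}}^2\,\dA\precsim R^2\,n\,\exp\lpar -c\log^2 n\rpar,$$
and the right-hand side is $\le C_K n^{-K}$ for every $K$, once $n$ is sufficiently large. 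I do not expect a serious obstacle; the only point demanding care is the geometric bookkeeping that places $A_n(z_n)$ safely inside $S_n$ and separates it from $\C\setminus S_n$ by a distance of order $\delta_n$. The crucial feature making the whole argument work is the logarithmic factor in $\delta_n=\log^2 n/\sqrt n$, which makes $\sqrt n\,\delta_n=\log^2 n$ simultaneously dominate the $R/\sqrt n$ scale of $A_n(z_n)$ and beat every polynomial power of $n$.
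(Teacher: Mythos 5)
Your proof is correct and takes essentially the same route as the paper: the eigenvalue relation combined with the off-diagonal damping of Lemma \ref{cor8.2}(ii), exploiting that $A_n(z_n)$ is separated from $\C\setminus S_n$ by a distance of order $\delta_n$ so that $\sqrt{n}\,\delta_n=\log^2 n$ yields super-polynomial decay, with the tail integral controlled by $\int_\C \e^{-n\rho(Q-\wh{Q})}\dA=O(1)$. The only cosmetic difference is where Cauchy--Schwarz enters: you first derive the pointwise bound $\lambda_j^{n\rho}\babs{\phi_j^{n\rho}(z)}^2\le\int_{A_n(z_n)}\babs{\mathbf{K}_{n\rho}(z,w)}^2\dA(w)$ using $\int_{A_n(z_n)}\babs{\phi_j^{n\rho}}^2=\lambda_j^{n\rho}$ and then integrate, whereas the paper keeps $\babs{\phi_j}$ in both integrals and bounds $\int\babs{\phi_j}\e^{-n(Q-\wh{Q})/2}$ by $\|\phi_j\|$ directly.
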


\begin{proof} W.l.o.g. let $\rho=1$. Choose $n_0$ such that $\dist(A_n(z_n),\C\setminus S_n)\ge \delta_n/2$ when $n\ge n_0$.
By Lemma \ref{cor8.2}, we then have an estimate
$$\babs{\mathbf{K}_{n}(\zeta,w)}\le Cn\e^{-c\log^2 n}\e^{-n(Q(w)-\wh{Q}(w))/2},\quad w\in \C\setminus S_n,\quad
\zeta\in A_n(z_n),$$
where $c$ and $C$ are positive constants.
This gives
\begin{equation*}\begin{split}\babs{\int_{\C\setminus S_n}\int_{A_n(z_n)} \phi_j(w)\mathbf{K}_n(\zeta,w)\overline{\phi_j(\zeta)}\dA(\zeta)\dA(w)}&\le Cn\e^{-c\log^2 n}\lpar \int
\babs{\phi_j(z)}\e^{-n(Q(z)-\wh{Q}(z))/2}\rpar^2\le\\
&\le  C_Kn^{-K}\|\phi_j\|^2=C_Kn^{-K},\\\end{split}\end{equation*}
where we have used the Cauchy-Schwarz inequality and that $\int \e^{-n(Q-\wh{Q})}=1+o(1)$.
\end{proof}

\begin{lem} \label{sep} Suppose that $\mathcal{Z}\subset S$ is of class
$M_{S_n,\rho}$.
There is then a constant $\gamma<1$ and a number $n_0=n_0(R)$ such that for all $z\in S$ satisfying $\dist(z,\d S)\ge 3\delta_n$ and all $n\ge n_0$, we have
$\lambda_{N_{n\rho}^+(z)+1}^{n\rho}< \gamma.$
\end{lem}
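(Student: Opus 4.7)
The plan is to adapt the proof of Lemma \ref{sepo}, substituting the $M_{S_n,\rho}$ inequality for the $M_{S,\rho}$ one and using Lemma \ref{chelp} to compensate for the loss of control on the boundary strip $S\setminus S_n$. Take $\rho=1$ for clarity. As in the earlier argument, I would first choose coefficients $c_1^n,\ldots,c_{N_n^++1}^n$, not all zero, so that $f=\sum_{j=1}^{N_n^++1}c_j^n\phi_j^n$ vanishes on $\mathcal{Z}_n\cap A_n^+(z_n)$; this is possible because the separation of $\mathcal{Z}$ forces $N_n^+\le C=C(R,s)$ uniformly in $n$, so the interpolation system has more unknowns than equations.

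Applying the $M_{S_n,1}$ inequality, then the pointwise--$L^2$ bound \eqref{oms} to $\Omega=\mathcal{Z}_n\setminus A_n^+(z_n)\subset S$, gives
\begin{equation*}
\int_{S_n}|f|^2\le \frac{C}{n}\sum_{z_{nj}\notin A_n^+(z_n)}|f(z_{nj})|^2\le C'\int_{S^+\setminus A_n(z_n)}|f|^2,
\end{equation*}
where the $s/\sqrt{n}$--thickening of $\Omega$ is checked to lie inside $S^+\setminus A_n(z_n)$. Splitting $\|f\|^2=\int_{A_n(z_n)}|f|^2+\int_{\C\setminus A_n(z_n)}|f|^2$ and bounding the second term below by $\int_{S^+\setminus A_n(z_n)}|f|^2\ge (1/C')\int_{S_n}|f|^2$ produces the key estimate
\begin{equation*}
\int_{A_n(z_n)}|f|^2\le \|f\|^2-\frac{1}{C'}\int_{S_n}|f|^2.
\end{equation*}

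It remains to show $\int_{S_n}|f|^2=(1-o(1))\|f\|^2$, i.e.\ that the $\C\setminus S_n$--tail of $f$ is negligible. Here I would proceed by dichotomy. Set $\gamma=1-1/(2C')$. If $\lambda_{N_n^++1}^n<\gamma$ we are done; otherwise $\lambda_j^n\ge \gamma$ for every $j\le N_n^++1$, and the hypothesis $\dist(z_n,\d S)\ge 3\delta_n$ guarantees that $\dist(A_n(z_n),\C\setminus S_n)\ge \delta_n/2$ for large $n$, so Lemma \ref{chelp} applies and yields $\int_{\C\setminus S_n}|\phi_j^n|^2\le C_Kn^{-K}/\gamma$ for each such $j$. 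A crude Cauchy--Schwarz bound on $|f|^2\le (N_n^++1)\sum|c_j^n|^2|\phi_j^n|^2$, together with $N_n^++1=O(1)$ and $\sum|c_j^n|^2=\|f\|^2$, then gives $\int_{\C\setminus S_n}|f|^2\le O_{R,K}(n^{-K})\|f\|^2$. Substituting back and using the spectral identity $\sum_j\lambda_j^n|c_j^n|^2=\int_{A_n(z_n)}|f|^2$ as in \eqref{kna} yields $\lambda_{N_n^++1}^n\le 1-1/C'+o(1)$, contradicting the assumption $\lambda_{N_n^++1}^n\ge \gamma$ once $n$ is large and $K\ge 1$. The main technical subtlety is precisely this dichotomy: the $M_{S_n,\rho}$ hypothesis does not itself control the mass of $f$ near $\d S$, so one needs the off-diagonal damping of $\mathbf{K}_n$ packaged in Lemma \ref{chelp}, and that damping is only available for eigenfunctions whose eigenvalue is bounded away from $0$---forcing the case split.
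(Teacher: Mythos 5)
Your proposal is correct and follows essentially the same route as the paper: the same test function $f=\sum_{j\le N_n^++1}c_j^n\phi_j^n$ vanishing on $\mathcal{Z}_n\cap A_n^+(z_n)$, the same chain combining the $M_{S_n,\rho}$ sampling inequality with the separation estimate \eqref{oms}, the spectral identity \eqref{kna}, and Lemma \ref{chelp} to dispose of the mass on $\C\setminus S_n$. The only difference is cosmetic: where you run a dichotomy (assume $\lambda_{N_n^++1}^n\ge\gamma$ to divide out the eigenvalue in Lemma \ref{chelp} and reach a contradiction), the paper keeps the factor $\lambda_{N_n^++1}^n$ and instead solves the resulting quadratic inequality $\lambda^2\le\alpha\lambda+C_Kn^{-K}$, arriving at the same conclusion.
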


\begin{proof} W.l.o.g. put $\rho=1$.
Assume that $f\in\tilde{H}_{n}$ is such that $f(z_{nj})=0$ for all $z_{nj}\in \mathcal{Z}_n\cap A_n^+(z)$.
Then since $\mathcal{Z}$ is $2s$-separated for a sufficiently small $s$,
\begin{equation}\label{const3}\begin{split}\int_{S_n} \babs{f}^2 &\le C\frac 1 {n} \sum_{z_{nj}\in S\setminus A_n^+(z)}
\babs{f(z_{nj})}^2\le Cs^{-2}\int_{S^{+} \setminus A_n(z)}\babs{f}^2\dA.\\
\end{split}
\end{equation}
We again define
$f=\sum_{j=1}^{N_{n}^++1}c_j^n\phi_j^n,$
where the numbers $c_j^n$ (not all zero) are chosen so that $f(z_{nj})=0$ for all $z_{nj}\in \mathcal{Z}_n\cap A_n^+(z)$.

This time, observe that Lemma \ref{chelp} and the Cauchy-Schwarz inequality implies
\begin{equation*}\begin{split}\lambda^n_{N_n^++1}&\int_{\C\setminus S_n}\babs{f}^2\le
\sum_{j,k=1}^{N_n^++1}\sqrt{\lambda^n_j\lambda^n_k}\int_{\C\setminus S_n}\babs{c_j^n\phi_j^n\cdot c_k^n\phi_k^n}\le\\
&\le C_Kn^{-K}\sum_{j,k=1}^{N_n^++1}\babs{c_j^nc_k^n}\le
C_Kn^{-K}(N_n^++1)\sum_{j=1}^{N_n^++1}\babs{c_j^n}^2\le C^\prime n^{-k}\sum_{j=1}^{N_n^++1}\babs{c_j^n}^2.\\
\end{split}\end{equation*}

In view of \eqref{const3}, we now conclude that
\begin{equation*}\begin{split}\lambda_{N_{n}^++1}^n\sum_{j=1}^{N_{n}^++1}
\babs{c_j^n}^2&\le  \sum_{j=1}^{N_n^++1}\lambda_j^n\babs{c_j^n}^2=
\lpar \int_{S^+}-\int_{S^+ \setminus A_n(z)}\rpar \babs{f}^2\dA\\
&\le\lpar 1-\frac {s^2} {C}\rpar\int_{S_n}\babs{f}^2 +\int_{S^+\setminus S_n}\babs{f}^2
\le \lpar 1-\frac {s^2} {C}\rpar\left\|f\right\|^2+\int_{S^+\setminus S_n}\babs{f}^2\le \\
&\le
\lpar 1-\frac {s^2} {C}+\frac {C_Kn^{-K}} {\lambda^n_{N_n^++1}}\rpar\sum_{j=1}^{N_{n}^++1}\babs{c_j^n}^2,\\
\end{split}
\end{equation*}
where $C_K$ depends only on $K$, $R$, and $s$.

With $\alpha=1-s^2/C$, this implies
$\lambda^n_{N_n^++1}<\sqrt{\alpha^2+4C_Kn^{-K}}.$
Thus if we define $\gamma$ as any number in the interval $(\alpha,1)$, we obtain $\lambda_{N_n^++1}^n\le \gamma$ for all $n$ large enough.
\end{proof}

As a corollary, we obtain the following estimate: Let $\mathcal{Z}$ be as in Lemma \ref{sep}. Then for all $n\ge n_0(R)$
\begin{equation}\label{sep2}\#(\mathcal{Z}_n\cap D(z;R/\sqrt{n}))\ge \#\{j;~\lambda_j^{n\rho}\ge \gamma\}+O(R),\end{equation}
where the $O$-constant is independent of $n$.

\subsection{Interpolating families} Assume that $\mathcal{Z}$ be a $\rho$-interpolating sequence contained in  $S$, and let $2s$ be a separation constant for $\mathcal{Z}$. We can w.l.o.g.
assume that $\rho=1$.

Fix $z\in S$.
We define $\mathcal{I}_n$ as the set of indices $j$ such that $z_{nj}\in A_n^-(z)$ and let $N_n^-=N_n^-(z)$ be the cardinality of $\mathcal{I}_n$. By the separation we have a uniform bound $N_n^-\le C=C(R,s)$.

Now let $\{c_j\}_1^{m_n}$ be a sequence with $c_j=0$ when $j\not \in \mathcal{I}_n$.
Since $\mathcal{Z}$ is interpolating we can choose $f_{nj}\in \tilde{H}_n$ such that $f_{nj}(z_{nj'})=\delta_{jj'}$ and
 $\left\|f_{nj}\right\|^2\le C/n$ for all $n$ and $j$. The functions $f_{nj}$, $j\in \mathcal{I}_n$ are linearly independent and span an
$N_n^-$-dimensional subspace of $\tilde{H}_n$. We denote this subspace by
$$F=\text{span}\left\{f_{nj};~j\in \mathcal{I}_n\right\}.$$
Note that an arbitrary $f=\sum_{j\in\mathcal{I}_n}c_jf_{nj}\in F$ satisfies
\begin{equation*}\left\|f\right\|^2\le CN_n^-\frac 1 n \sum_{j\in \mathcal{I}_n}\babs{c_j}^2\le C'\frac 1 n \sum_{j\in \mathcal{I}_n}\babs{f(z_{nj})}^2.\end{equation*}
Applying \eqref{oms} now gives
\begin{equation*}\left\|f\right\|^2\le C\int_{A_n(z)}\babs{f}^2=C\left\langle \mathbf{K}_n^{A_n(z)}f,f\right\rangle.\end{equation*}
With $\delta=1/C$, we have shown that
\begin{equation*}\frac {\langle \mathbf{K}_n^{A_n(z)}f,f\rangle} {\langle f,f\rangle}\ge
\delta,\quad f\in F,~f\not\equiv 0.\end{equation*}

Let $\lambda_j^n$ be the eigenvalues of the operator $\mathbf{K}_n^{A_n(z)}$ on $\tilde{H}_n$ arranged in decreasing order.
By the Weyl--Courant lemma (see \cite{DS}, p. 908) we have
\begin{equation*}\lambda_{j-1}^n\ge \inf_{g\in E_j}\frac {\langle \mathbf{K}_n^{A_n(z)}g,g\rangle} {\langle g,g\rangle},\end{equation*}
where $E_j$ ranges over all $j$-dimensional subspaces of $\tilde{H}_n$. Since $\dim{F}=N_n^-$, we obtain
$\lambda_{N_n^--1}^n\ge \delta.$
The construction can obviously be carried out for $\rho\ne 1$ as well.
We have proved the following lemma.

\begin{lem} \label{numb} Suppose that $\mathcal{Z}$ is $\rho$-interpolating, and let $\lambda_j^{n\rho}$ be the
eigenvalues of the operator $\mathbf{K}_{n\rho}^{A_n(z)}$ on $\tilde{H}_{n\rho}$, where $z\in S$. Also let
$N_{n\rho}^-$ be the number of points in $\mathcal{Z}_n\cap A_n^-(z)$. Then
there is a number $\delta>0$ independent of $n$ and $z$ such that
$$\#\{j;~\lambda_j^{n\rho}\ge \delta\}\ge N_{n\rho}^--1.$$
\end{lem}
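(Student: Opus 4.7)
The plan is to exhibit an $N_{n\rho}^-$-dimensional subspace $F$ of $\tilde{H}_{n\rho}$ on which the Rayleigh quotient of the concentration operator $\mathbf{K}_{n\rho}^{A_n(z)}$ is uniformly bounded below by some positive $\delta$, and then to invoke the Weyl--Courant min-max principle. After reducing to $\rho=1$ (the argument is insensitive to which space $\tilde{H}_{n\rho}$ we work in), set $\mathcal{I}_n=\{j:z_{nj}\in A_n^-(z)\}$, so $|\mathcal{I}_n|=N_n^-$. By Lemma \ref{BASE0}, the interpolating family $\mathcal{Z}$ is $2s$-separated for some $s>0$; hence $N_n^-\le C(R,s)$ uniformly in $n$ and $z$, and the disks $D(z_{nj};s/\sqrt{n})$ for $j\in\mathcal{I}_n$ are pairwise disjoint and contained in $A_n(z)$, because $A_n^-(z)$ has radius $(R-s)/\sqrt{n}$.

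Next, I would apply the interpolation hypothesis to the standard coordinate data to produce, for each $j\in\mathcal{I}_n$, a function $f_{nj}\in\tilde{H}_n$ with $f_{nj}(z_{nj'})=\delta_{jj'}$ for every $j'$ indexing a point of $\mathcal{Z}_n$, and with $\|f_{nj}\|^2\le C/n$. The reproducing property on $\mathcal{I}_n$ forces the $f_{nj}$ to be linearly independent, so $F:=\lspan\{f_{nj}:j\in\mathcal{I}_n\}$ has dimension exactly $N_n^-$. For $f=\sum_{j\in\mathcal{I}_n}c_jf_{nj}\in F$, Cauchy--Schwarz combined with $\|f_{nj}\|^2\le C/n$ and $N_n^-\le C(R,s)$ gives $\|f\|^2\le (C'/n)\sum_{j\in\mathcal{I}_n}|c_j|^2$; since $f(z_{nj})=c_j$, this reads $\|f\|^2\le (C'/n)\sum_{j\in\mathcal{I}_n}|f(z_{nj})|^2$. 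Feeding this into the pointwise-to-$L^2$ bound \eqref{oms} applied to the $2s$-separated set $\{z_{nj}\}_{j\in\mathcal{I}_n}\subset A_n^-(z)$, whose $s/\sqrt{n}$-neighborhood sits inside $A_n(z)$, yields $\|f\|^2\le C\int_{A_n(z)}|f|^2$. Since $\mathbf{K}_n$ is the orthogonal projection of $L^2$ onto $\tilde{H}_n$ and $f\in\tilde{H}_n$, we can rewrite $\int_{A_n(z)}|f|^2=\langle \1_{A_n(z)}f,f\rangle=\langle \mathbf{K}_n^{A_n(z)}f,f\rangle$, so the Rayleigh quotient on $F$ is at least $\delta:=1/C>0$.

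Since $\dim F=N_n^-$ and the Rayleigh quotient of $\mathbf{K}_n^{A_n(z)}$ is $\ge\delta$ on $F$, the Weyl--Courant variational characterization gives $\lambda_{N_n^--1}^n\ge \delta$, and hence $\#\{j:\lambda_j^n\ge \delta\}\ge N_n^--1$. The same construction in $\tilde{H}_{n\rho}$ handles the general $\rho>0$ case without modification. The main technical point I would have to verify is the whole chain from the abstract interpolation hypothesis to a concrete Rayleigh quotient lower bound: separation must be used simultaneously to keep $|\mathcal{I}_n|$ uniformly bounded (otherwise the factor $N_n^-$ from Cauchy--Schwarz is not absorbed into a constant), to ensure the $s/\sqrt{n}$-enlargements of the $z_{nj}$'s remain inside $A_n(z)$, and to legitimize the invocation of \eqref{oms}. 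All three are clean consequences of the $2s$-separation inherited from Lemma \ref{BASE0}.
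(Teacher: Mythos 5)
Your proposal is correct and follows essentially the same route as the paper: the same dual family $f_{nj}$ from the interpolation hypothesis, the same use of separation to bound $N_{n\rho}^-$ and to invoke \eqref{oms} on $A_n^-(z)$, the same identification $\int_{A_n(z)}|f|^2=\langle \mathbf{K}_{n\rho}^{A_n(z)}f,f\rangle$, and the same Weyl--Courant conclusion $\lambda_{N_{n\rho}^--1}^{n\rho}\ge\delta$. No gaps to report.
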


Next notice that
since $\mathcal{Z}$ is $2s$-separated (Lemma \ref{BASE0}), there is a constant $C$ such that
\begin{equation*}\#(\mathcal{Z}_n\cap D(z;R/\sqrt{n}))-N_{n\rho}^-\le C(R^2-(R-s)^2)/s^2.\end{equation*}
Using Lemma \ref{numb}, we conclude that
\begin{equation}\label{split}\#(\mathcal{Z}_n\cap D(z;R/\sqrt{n}))\le O(R)+\#\{j;~\lambda_j^{n\rho}\ge \delta\},\qquad \text{as}\quad R\to\infty,\end{equation}
where the $O$-constant is independent of $n$.

\section{Beurling--Landau densities of $M$ families and of interpolating families} \label{pmainr}

In this section, we prove Lemma \ref{mainr}. Our proof depends partly on trace estimates
for the concentration operator, which are proved in Section \ref{LATER}.

\subsection{Proof of Lemma \ref{mainr}(i)} \label{p1}
Let $\mathcal{Z}\subset S$ be of class $M_{S_n,\rho}$, and let $\zeta=(z_n)$ be a sequence with $\dist(z_n,\C\setminus S)\ge 3\delta_n$.

Consider the eigenvalues $\lambda_j^{n\rho}=\lambda_j^{n\rho}(z_n)$ of the concentration operator $\mathbf{K}_{n\rho}^{A_n(z_n)}$, and
put
$\mu_n=\sum_{j=1}^{m_n}\delta_{\lambda_j^{n\rho}}$
where $\delta_z$ is the Dirac measure at $z$. We then have
$$\trace\lpar\mathbf{K}_{n\rho}^{A_n(z_n)}\rpar=\int_0^1 x~\diff\mu_n(x)
\quad ,\quad \trace\lpar\mathbf{K}_n^{A_n(z_n)}\circ\mathbf{K}_n^{A_n(z_n)}\rpar=
\int_0^1 x^2 \diff\mu_n(x).$$
Let $\gamma$ and $n_0$ be given by Lemma \ref{sep}. We then have, for all $n\ge n_0(R)$,
\begin{equation*}\begin{split}&\#\{j; \lambda_j^{n\rho}>\gamma\}=\int_\gamma^1 \diff \mu_n(x)\ge \int_0^1 x~\diff \mu_n(x)-\frac 1 {1-\gamma}
\int_0^1 x(1-x)\diff \mu_n(x)=\\
&=\trace\lpar\mathbf{K}_{n\rho}^{A_n(z_n)}\rpar-\frac 1 {1-\gamma}
\left[ \trace\lpar\mathbf{K}_{n\rho}^{A_n(z_n)}\rpar-\trace
\lpar\mathbf{K}_{n\rho}^{A_n(z_n)}\circ\mathbf{K}_{n\rho}^{A_n(z_n)}\rpar\right].\\
\end{split}
\end{equation*}

By the estimate \eqref{sep2} followed by the trace estimates in lemmas \ref{le1} and \ref{le4}, we now get
\begin{equation*}\begin{split}
&\liminf_{n\to\infty}
\frac {\#(\mathcal{Z}_n\cap D(z_n;R/\sqrt{n}))}
{R^2\Delta Q(z_n)}\ge \liminf_{n\to\infty}\frac {\#\{j; \lambda_j^{n\rho}>\gamma\}+O(R)} {R^2\Delta Q(z_n)}\ge
\\
&\ge \liminf_{n\to \infty}\left[\frac{\trace\lpar\mathbf{K}_{n\rho}^{A_n(z_n)}\rpar}
{R^2\Delta Q(z_n)}-\frac 1 {1-\gamma}\frac {\trace\lpar\mathbf{K}_{n\rho}^{A_n(z_n)}\rpar-\trace
\lpar\mathbf{K}_{n\rho}^{A_n(z_n)}\circ\mathbf{K}_{n\rho}^{A_n(z_n)}\rpar}
{R^2\Delta Q(z_n)}\right]+O(1/R)=\\
&= \rho(R+s)^2/R^2+O(1/R).\\
\end{split}
\end{equation*}
Sending
$R\to\infty$, we obtain $D^-(\mathcal{Z};\zeta)\ge \rho$, and the proof of Lemma \ref{mainr}(i) is finished. \qed

\subsection{Proof of Lemma \ref{mainr}(ii)} \label{p3} Let $\mathcal{Z}$ be a $\rho$-interpolating family and let
$\zeta=(z_n)$ be a sequence with $z_n\in S_n$ for all $n$. Again let $\lambda_j^{n\rho}$ be the eigenvalues
of the concentration operator $\mathbf{K}_{n\rho}^{A_n(z_n)}$.

Let $\mu_n$ be the measure $\mu_n=\sum_{j=1}^{m_n}\delta_{\lambda_j^{n\rho}}$. Then for any $\delta\in (0,1)$
$$\{j; \lambda_j^{n\rho}\ge \delta\}=\int_\delta^1 \diff\mu_n(x)\le \int_0^1 x\diff\mu_n(x)+\frac 1 \delta
\int_0^1 x(1-x)\diff\mu_n(x).$$
In view of the estimate \eqref{split}, we can pick $\delta=\delta(R,s)>0$ so that
\begin{equation*}\#(\mathcal{Z}_n\cap D(z;R/\sqrt{n}))\le O(R)+\trace\lpar\mathbf{K}_{n\rho}^{A_n(z_n)}\rpar+\frac 1 \delta
\left[\trace\lpar\mathbf{K}_{n\rho}^{A_n(z_n)}\rpar-\trace\lpar\mathbf{K}_{n\rho}^{A_n(z_n)}\circ \mathbf{K}_{n\rho}^{A_n(z_n)}\rpar\right].\end{equation*}

For $z_n\in S_n$, the trace estimates in lemmas \ref{le4} and \ref{le1} now imply
\begin{equation*}\begin{split}&\limsup_{n\to\infty}\frac {\#(\mathcal{Z}_n\cap D(z_n;R/\sqrt{n}))}
{R^2\Delta Q(z_n)}\le\\
&\le \limsup_{n\to\infty}\frac {\trace(\mathbf{K}_{n\rho}^{A_n(z_n)})} {R^2\Delta Q(z_n)}+
\frac 1 \delta \limsup_{n\to\infty}\frac {\trace\lpar\mathbf{K}_{n\rho}^{A_n(z_n)}\rpar-\trace\lpar\mathbf{K}_{n\rho}^{A_n(z_n)}\circ \mathbf{K}_{n\rho}^{A_n(z_n)}\rpar}
{R^2\Delta Q(z_n)}+O(1/R)=\rho+O(1/R).\\
\end{split}
\end{equation*}
Letting $R\to\infty$ now shows that $D^+(\mathcal{Z};\zeta)\le \rho$, which finishes the proof of Lemma \ref{mainr}(ii). $\qed$

\section{Equidistribution of the bulk part of a Fekete set} \label{mt2proof}

In this section we prove Lemma \ref{mt2}. The proof is given modulo some estimates for the correlation kernel, which are postponed to the next section.

\subsection{Proof of Lemma \ref{mt2}(1)}
Let $\mathcal{F}_n=\{z_{n1},\ldots,z_{nn}\}$ be a Fekete set and
consider the Lagrange interpolation polynomials
\begin{equation*}
l_{nj}(z)=\prod_{i\ne j}(z-z_{ni})/\prod_{i\ne j}(z_{nj}-z_{ni}).\end{equation*}
To avoid bulky notation, from now on write $z_j:=z_{nj}$ etc.

Now consider the \textit{Leja--Siciak function} corresponding to $\mathcal{F}_n$,
$$\Phi_n(z)=\max\left\{\babs{l_j(z)}^2\e^{nQ(z_j)};~j=1,\ldots,n\right\}.$$
It is known that for all $z\in\C$
\begin{equation}\label{lise}\Phi_n(z)^{1/n}\le \e^{\wh{Q}(z)}\quad \text{and}\quad \Phi_n(z)^{1/n}\to \e^{\wh{Q}(z)},\quad \text{as}\quad n\to\infty.\end{equation}
We refer to \cite{ST}, \textsection III.5, notably eq. (5.3) and Corollary 5.3,
for proofs of these statements.

Let us write
\begin{equation}\label{e71}\ell_j(z)=l_j(z)\e^{-n(Q(z)-Q(z_j))/2},\end{equation}
and notice that \eqref{lise} implies that
\begin{equation}\label{e72}\babs{\ell_j(z)}\le \e^{-n(Q(z)-\wh{Q}(z))/2}.\end{equation}

The following lemma concludes our proof for part (1) of Lemma \ref{mt2}.

\begin{lem}\label{jaa} Let $\mathcal{F}=\{\mathcal{F}_n\}$ be a family of Fekete sets. Then $\mathcal{F}$ is uniformly separated.
\end{lem}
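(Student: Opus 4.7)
The plan is to exploit the weighted Lagrange polynomials $\ell_j$ introduced in \eqref{e71}, which by construction satisfy $\ell_j(z_j) = 1$ and $\ell_j(z_k) = 0$ whenever $k \ne j$. Writing $\ell_j(z) = p(z)\e^{-nQ(z)/2}$ where $p(z) := l_j(z)\e^{nQ(z_j)/2}$ is a polynomial of degree $\le n-1$, we see that $\ell_j \in \tilde{H}_n$. The first key step is a global $L^\infty$ bound: since $\widehat{Q} \le Q$ pointwise (as $\widehat{Q}$ is the supremum of subharmonic minorants of $Q$), the estimate \eqref{e72} immediately gives $\|\ell_j\|_{L^\infty(\C)} \le 1$.

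With this bound in hand, I would apply the Bernstein-type estimate \eqref{babel} of Lemma \ref{bernstein}, which is valid on the neighbourhood $\Lambda$ of $S$ where $\Delta Q$ is bounded. This yields $|\nabla(|\ell_j|)(\zeta)| \le C\sqrt{n}$ for $\zeta \in \Lambda$ at points of differentiability. Fix $\delta>0$ small enough that the $\delta$-neighbourhood of $S$ is contained in $\Lambda$. For any two distinct Fekete points $z_j, z_k \in S$ with $|z_j-z_k| < \delta$, the line segment between them stays in $\Lambda$, so the mean value inequality applied to the Lipschitz function $|\ell_j|$ along this segment gives
$$1 = \big| |\ell_j(z_j)| - |\ell_j(z_k)| \big| \le C\sqrt{n}\,|z_j-z_k|,$$
and hence $|z_j-z_k| \ge 1/(C\sqrt{n})$. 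The case $|z_j-z_k| \ge \delta$ is trivially compatible with uniform separation for all sufficiently large $n$. Setting $s = 1/C$, this gives $|z_j-z_k| > s/\sqrt{n}$ for all distinct pairs, establishing the claim.

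The main conceptual input is the global supremum bound $|\ell_j| \le 1$, which rests on the Leja--Siciak estimates \eqref{lise} together with the fact that Fekete points lie in $S$ (so that $Q(z_j) = \widehat{Q}(z_j)$, making the normalization in \eqref{e71} consistent with \eqref{e72}). A minor technical point is that $|\ell_j|$ may fail to be smooth where $\ell_j$ vanishes; however, \eqref{babel} still yields a uniform Lipschitz bound on $\Lambda$, which is all the mean value argument requires. Structurally the argument is the direct analogue for Fekete sets of the proof of Lemma \ref{BASE0} in \textsection \ref{BASE1}, with the Leja--Siciak bound playing the role that the interpolation property played there.
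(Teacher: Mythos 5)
Your proposal is correct and follows essentially the same route as the paper: the global bound $\|\ell_j\|_{L^\infty}\le 1$ from the Leja--Siciak estimate \eqref{e72}, the Bernstein-type gradient bound \eqref{babel} of Lemma \ref{bernstein} on the neighbourhood $\Lambda$, and the mean value inequality between two nearby Fekete points giving $1\le C\sqrt{n}\,\babs{z_{nj}-z_{nk}}$. Your added remarks on the Lipschitz issue where $\ell_j$ vanishes and on the case of well-separated pairs are harmless refinements of the same argument.
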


\begin{proof} By \eqref{e72} we have $\|\ell_j\|_{L^\infty}\le 1$ for all $j$.
Hence
Lemma \ref{bernstein} implies that there is a neighbourhood $\Lambda$ of $S$ such that
\begin{equation*}\left\|\nabla\lpar \babs{\ell_j}\rpar\right\|_{L^\infty(\Lambda)}\le C\sqrt{n}\end{equation*}
for some constant $C$ independent of $n$ and $j$.

Fix $z_{nj}\in \mathcal{F}_n$
 and assume that a point $z_{nk}\in \mathcal{F}_n$ is sufficiently close to $z_{nj}$. Then
\begin{equation*}
1=\babs{\babs{\ell_j(z_{nj})}-\babs{\ell_j(z_{nk})}}\le \left\|\nabla \babs{\ell_j}\right\|_{L^\infty(\Lambda)}\babs{z_{nj}-z_{nk}}\le C\sqrt{n}\babs{z_{nj}-z_{nk}}.\end{equation*}
We have shown that $\mathcal{F}$ is uniformly separated with best separation constant $\ge 1/C$.
\end{proof}

\subsection{Proof of Lemma \ref{mt2}(2)}
We now modify the weighted Lagrangian polynomials $\ell_j$ \eqref{e71}, by multiplying by certain "peak polynomials'',  to localize to a small neighbourhood of $z_j$.

Take $\eps>0$ small; consider the corresponding
kernel $\mathbf{K}_{\eps n}(z,w)$, and put
\begin{equation}\label{ljd}L_j(z)=\lpar \frac {\mathbf{K}_{\eps n}(z,z_j)} {\mathbf{K}_{\eps n}(z_j,z_j)}\rpar^2
\cdot\ell_j(z).\end{equation}
These are weighted polynomials of degree $(1+2\eps)n$; evidently
$L_j(z_k)=\delta_{jk}.$
We have the following lemma.

\begin{lem} \label{1nlem} There is a constant $C$ depending on $\eps$ but not on $n$ such that for all $z_j\in \mathcal{F}_n\cap S_n$, we have
\begin{equation*}
\left\|L_j\right\|_{L^1}\le\frac C n.\end{equation*}
\end{lem}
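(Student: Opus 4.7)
The plan is to split
$\|L_j\|_{L^1} = \int_S |L_j|\,\diff A + \int_{\C \setminus S} |L_j|\,\diff A$
and to handle the two pieces by fundamentally different arguments. The guiding observation is that \emph{inside} $S$ the Lagrange factor $|\ell_j|$ is bounded by $1$, so no off-diagonal decay of $\mathbf{K}_{\eps n}$ is needed there---the reproducing identity alone does the job; whereas \emph{outside} $S$, the hypothesis $z_j \in S_n$ forces the correlation kernel to be super-polynomially small, drowning out any polynomial loss coming from $\ell_j$.

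For the bulk piece, I would combine \eqref{e72} with $Q = \wh Q$ on $S$ to obtain $|\ell_j(z)| \le 1$ for $z \in S$, so that
\begin{equation*}
\int_S |L_j|\,\diff A \le \frac{1}{\mathbf{K}_{\eps n}(z_j,z_j)^2} \int_S |\mathbf{K}_{\eps n}(z,z_j)|^2\,\diff A(z) \le \frac{1}{\mathbf{K}_{\eps n}(z_j,z_j)^2}\int_\C |\mathbf{K}_{\eps n}(z,z_j)|^2\,\diff A(z) = \frac{1}{\mathbf{K}_{\eps n}(z_j,z_j)},
\end{equation*}
the final equality being the reproducing identity $\|\mathbf{K}_{\eps n,\,z_j}\|^2 = \mathbf{K}_{\eps n}(z_j,z_j)$. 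The standard bulk diagonal asymptotic $\mathbf{K}_{\eps n}(z_j,z_j) \ge c\,\eps n\,\Delta Q(z_j)$ (cf.\ \cite{AHM}) applies since $z_j \in S_n$ lies well inside the bulk for the $\eps n$-kernel once $n$ is large; combined with strict subharmonicity of $Q$ on $S$, this yields $\mathbf{K}_{\eps n}(z_j,z_j) \ge c_\eps n$, and hence $\int_S |L_j|\,\diff A \le C/n$.

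For the exterior piece, I would apply Lemma \ref{cor8.2}(ii) with first argument $z_j$ and $\delta := \dist(z_j, \partial S) \ge 2\delta_n$. Since $z_j$ lies in the interior of $S$, every $z \in \C \setminus S$ satisfies $|z - z_j| \ge \delta$, so $\min\{|z-z_j|,\delta\} \equiv \delta$ on that region. The kernel bound therefore becomes
\begin{equation*}
|\mathbf{K}_{\eps n}(z_j,z)|^2 \le C(\eps n)^2\, e^{-2c\sqrt{\eps n}\,\delta}\, e^{-\eps n(Q(z)-\wh Q(z))} \le C(\eps n)^2\, e^{-c'\log^2 n}\, e^{-\eps n(Q(z)-\wh Q(z))},
\end{equation*}
with $c' = 2c\sqrt{\eps}$. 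Combining this with $|\ell_j(z)| \le e^{-n(Q(z)-\wh Q(z))/2}$ from \eqref{e72} and the diagonal lower bound above yields the pointwise estimate
\begin{equation*}
|L_j(z)| \le C\, e^{-c'\log^2 n}\, e^{-(1+2\eps)n(Q(z)-\wh Q(z))/2}, \qquad z \in \C \setminus S.
\end{equation*}
Since $Q - \wh Q \ge 0$ and $Q(z)/\log|z|^2 \to \infty$ at infinity, $\int_{\C \setminus S} e^{-(1+2\eps)n(Q-\wh Q)/2}\,\diff A = O(1)$, so the exterior contribution is $O(e^{-c'\log^2 n})$---smaller than $n^{-N}$ for every $N$.

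The only genuine technicality is the bulk diagonal lower bound $\mathbf{K}_{\eps n}(z_j,z_j) \ge c_\eps n$ for $z_j \in S_n$, which is by now standard and recorded in \cite{AHM} and its predecessors. Once that is in hand, combining the two estimates gives $\|L_j\|_{L^1} \le C/n$ with constant depending on $\eps$ (through the diagonal lower bound and the decay rate $c$). The conceptual point is that the reproducing identity entirely bypasses off-diagonal kernel analysis on $S$, localizing the delicate estimates to the exterior, where the condition $z_j \in S_n$ renders them automatic.
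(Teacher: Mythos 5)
Your proposal is correct, and its engine is the same as the paper's: the pointwise bound $\babs{L_j(z)}\le \babs{\mathbf{K}_{\eps n}(z,z_j)}^2\,\mathbf{K}_{\eps n}(z_j,z_j)^{-2}\,\e^{-n(Q(z)-\wh{Q}(z))/2}$ coming from \eqref{lise}/\eqref{e72}, the reproducing identity $\int_\C\babs{\mathbf{K}_{\eps n}(z,z_j)}^2\dA(z)=\mathbf{K}_{\eps n}(z_j,z_j)$, and the bulk diagonal estimate $\mathbf{K}_{\eps n}(z_j,z_j)\ge c_\eps n$ for $z_j\in S_n$. Where you differ is in splitting off $\C\setminus S$ and treating it with the off-diagonal damping of Lemma \ref{cor8.2}(ii) plus the integrability of $\e^{-(1+2\eps)n(Q-\wh Q)/2}$ at infinity. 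The paper's proof shows this split is unnecessary: since $\wh Q\le Q$ on all of $\C$ (not merely with equality on $S$), the Leja--Siciak factor $\e^{-n(Q-\wh Q)/2}$ is $\le 1$ globally, so the single bound $\babs{L_j}\le \babs{\mathbf{K}_{\eps n}(\cdot,z_j)}^2/\mathbf{K}_{\eps n}(z_j,z_j)^2$ holds on the whole plane and the reproducing identity finishes the argument in one line, with no kernel decay and no growth-at-infinity discussion. Your exterior analysis is sound (for $z\notin S$ indeed $\babs{z-z_j}\ge\dist(z_j,\d S)\ge 2\delta_n$, so the damping is super-polynomial), but it buys nothing here; that machinery becomes genuinely necessary only in Lemma \ref{2nlem} and Lemma \ref{aple}, where the reproducing-identity trick no longer suffices. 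One small bookkeeping remark: rather than asserting that $z_j\in S_n$ is ``well inside the bulk for the $\eps n$-kernel'' (the natural scale for $\mathbf{K}_{\eps n}$ is $\log^2 n/\sqrt{\eps n}$, which is larger than $\delta_n$), it is cleaner to quote Lemma \ref{l1}, which is stated exactly for $z\in S_n$ and the kernel $\mathbf{K}_{n\rho}$, and which is what the paper invokes for the diagonal lower (and upper) bound.
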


\begin{proof} Fix $z_j\in \mathcal{F}_n\cap S_n$. By \eqref{lise} we have
$$\babs{L_j(z)}\le \frac {\babs{\mathbf{K}_{\eps n}(z,z_j)}^2} {\mathbf{K}_{\eps n}(z_j,z_j)^2}\e^{-n(Q(z)-\wh{Q}(z))/2}.$$
Using the asymptotics in Lemma \ref{l1} and the fact that $\wh{Q}\le Q$
everywhere,
we conclude that
\begin{equation*}\left\|L_j\right\|_{L^1}\le \frac C {(\eps n)^2}\int_\C \babs{\mathbf{K}_{n\eps}(z,z_j)}^2\dA(z)=
\frac C {(\eps n)^2}\mathbf{K}_{n\eps}(z_j,z_j)\le
\frac {C'} {\eps n}.\end{equation*}
\end{proof}

\begin{lem} \label{2nlem} Let
$$F_n(z)=\sum_{z_j\in S_n}\babs{L_j(z)}.$$
There are then constants $C=C(\eps,s)$ and $n_0=n_0(\eps)$ such that $\|F_n\|_{L^\infty}\le C$ when $n\ge n_0$.
\end{lem}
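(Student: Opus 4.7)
The plan is to bound $|L_j(z)|$ pointwise by an exponentially decaying function of $\sqrt{\eps n}\,|z-z_j|$ and then sum over $z_j$ using the separation of the Fekete set. Fix $z\in\C$ and recall from \eqref{ljd} that
\[
|L_j(z)| \;=\; \frac{|\mathbf{K}_{\eps n}(z,z_j)|^2}{\mathbf{K}_{\eps n}(z_j,z_j)^2}\,|\ell_j(z)|,
\]
where by \eqref{e72}, $|\ell_j(z)|\le e^{-n(Q(z)-\wh Q(z))/2}$.

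First I would use that every $z_j\in \mathcal{F}_n\cap S_n$ satisfies $\dist(z_j,\d S)\ge 2\delta_n$, and apply Lemma \ref{cor8.2}(ii) to the kernel $\mathbf{K}_{\eps n}$. This yields
\[
|\mathbf{K}_{\eps n}(z,z_j)|\;\le\; C\eps n\,\exp\bigl(-c\sqrt{\eps n}\,\min\{|z-z_j|,2\delta_n\}\bigr)\,e^{-\eps n(Q(z)-\wh Q(z))/2}.
\]
For the denominator, the diagonal asymptotics invoked in the proof of Lemma \ref{1nlem} (Lemma \ref{l1}) give $\mathbf{K}_{\eps n}(z_j,z_j)\gtrsim \eps n$ uniformly for $z_j\in S_n$. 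Combining these with the bound on $|\ell_j(z)|$ and using $Q\ge\wh Q$ everywhere, the factor $e^{-n(1+2\eps)(Q(z)-\wh Q(z))/2}$ is $\le 1$, so
\[
|L_j(z)|\;\le\; C'\exp\bigl(-2c\sqrt{\eps n}\,\min\{|z-z_j|,2\delta_n\}\bigr),
\]
for a constant $C'=C'(\eps)$.

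Next I would sum this bound over $z_j\in\mathcal{F}_n\cap S_n$, exploiting the uniform $s/\sqrt n$-separation from Lemma \ref{jaa}. Split the sum according to whether $|z-z_j|<2\delta_n$ or $|z-z_j|\ge 2\delta_n$. In the near range, separation implies that the number of Fekete points in the annulus $\{w:ks/\sqrt n\le |w-z|<(k+1)s/\sqrt n\}$ is $O(k)$, so
\[
\sum_{|z-z_j|<2\delta_n}\!\exp\bigl(-2c\sqrt{\eps n}\,|z-z_j|\bigr)\;\le\;\sum_{k=0}^\infty (k+1)\,e^{-2cs\sqrt{\eps}\,k}\;=:\;C(\eps,s)<\infty,
\]
a convergent geometric-type series whose sum depends only on $\eps$ and $s$. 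In the far range, each term is bounded by $e^{-4c\sqrt{\eps}\,\log^2 n}$ and there are at most $n$ such points, contributing $n\,e^{-4c\sqrt{\eps}\,\log^2 n}=o(1)$ as $n\to\infty$. Taking $n_0(\eps)$ large enough that the far-range contribution is at most $1$ yields $\|F_n\|_{L^\infty}\le C(\eps,s)$ for all $n\ge n_0$.

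The only real subtlety is that Lemma \ref{cor8.2}(ii) requires the distance-to-boundary scale to dominate the reciprocal scale $1/\sqrt{\eps n}$ of the kernel; this is exactly why the definition of $S_n$ uses $\delta_n=\log^2 n/\sqrt n$, since $\sqrt{\eps n}\cdot 2\delta_n=2\sqrt{\eps}\,\log^2 n\to\infty$ makes the boundary-truncated factor $e^{-2c\sqrt{\eps n}\cdot 2\delta_n}$ decay superpolynomially. Apart from verifying this matching of scales, the argument is a straightforward combination of off-diagonal damping and separation, so I do not expect any other obstacle.
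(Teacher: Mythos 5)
Your proposal is correct and follows essentially the same route as the paper: bound $\babs{L_j(z)}$ by $C\exp\lpar-c\sqrt{\eps n}\min\{\babs{z-z_j},\delta_n\}\rpar$ using Lemma \ref{cor8.2}(ii) together with the diagonal lower bound $\mathbf{K}_{\eps n}(z_j,z_j)\gtrsim \eps n$ from Lemma \ref{l1}, kill the far range by the superpolynomial factor $\e^{-c\sqrt{\eps}\log^2 n}$ against at most $n$ terms, and control the near range by separation. The only cosmetic difference is that you sum the near-range terms over dyadic annuli, whereas the paper compares the sum with the integral $n\int_\C \e^{-c\sqrt{n\eps}\babs{z-w}}\dA(w)$; both uses of separation are equivalent.
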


\begin{proof} For $z_j\in S_n$ we have $\mathbf{K}_n(z_j,z_j)\ge cn$ where $c>0$, by Lemma \ref{l1}.
Using Lemma \ref{cor8.2} we find that
$$\babs{L_j(z)}\le CV_j(z),\quad z_j\in S_n,\quad z\in \C,$$
where
\begin{equation}\label{vjjdef}
V_j(z)=\exp\lpar -c\sqrt{n\eps}\min\left\{\babs{z-z_j},\delta_{n}\right\}\rpar,\end{equation}
where $c$ is a positive constant.

Observe that $V_j(z)\le \e^{-c\sqrt{\eps}\log^2 n}\le 1/n$ when $\babs{z-z_j}\ge \delta_{n}$ and $n$ is large enough. This gives
that there is $n_0=n_0(\eps)$ such that
$$F_n(z)\le C\sum_{z_j\in D(z;\delta_{n})}V_j(z)+1,\quad n\ge n_0.$$

Now when $z_j\in D(z;\delta_n)$ we have $V_j(z)=\e^{-c\sqrt{n\eps}\babs{z-z_j}}$. Hence when $\babs{w-z_j}\le s/\sqrt{n}$ we have
$V_j(z)\le C\e^{-c\sqrt{n\eps}\babs{z-w}}$,
where $C=\e^{cs\sqrt{\eps}}$. This gives that
$$V_j(z)\le C\frac n {s^2} \int_{D(z_j;s/\sqrt{n})}\e^{-c\sqrt{n\eps}\babs{z-w}}\dA(w).$$
By the separation, we then obtain that, when $n\ge n_0$,
$$F_n(z)\le 1+C\frac n {s^2} \int_\C \e^{-c\sqrt{n\eps }\babs{z-w}}\dA(w)=1+C\frac 1 {s^2\eps} \int_\C \e^{-c\babs{\zeta}}\dA(\zeta)<\infty.$$
The proof of the lemma is finished.
\end{proof}

The following lemma concludes our proof for part (1) of Lemma \ref{mt2}.

\begin{lem} \label{prec} Let $\mathcal{F}=\{\mathcal{F}_n\}_{n=1}^\infty$ be a sequence of Fekete sets. Then the triangular family $\mathcal{F}^\prime$ given by
$\mathcal{F}_n^\prime=\mathcal{F}_n\cap S_n$ is $(1+2\eps)$-interpolating for any $\eps>0$.
\end{lem}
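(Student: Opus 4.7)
The plan is to exhibit the interpolant explicitly as a linear combination of the modified Lagrangian weighted polynomials $L_j$ from \eqref{ljd} and control the $L^2$-norm by a Schur-test argument, using the two bounds already established in Lemma \ref{1nlem} and Lemma \ref{2nlem}.

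More precisely, given a sequence of target values $c=\{c_{nj}\}$ with $\sup_n \frac{1}{n(1+2\eps)}\sum_{j}|c_{nj}|^2<\infty$, indexed over $z_{nj}\in\mathcal{F}_n^\prime=\mathcal{F}_n\cap S_n$, I define
\begin{equation*}
f_n(z)=\sum_{z_j\in\mathcal{F}_n^\prime} c_{nj}\, L_j(z).
\end{equation*}
Since each $L_j$ is a weighted polynomial of degree $(1+2\eps)n$, so is $f_n$, i.e.\ $f_n\in \tilde{H}_{(1+2\eps)n}$. The reproducing property $L_j(z_k)=\delta_{jk}$ yields $f_n(z_{nk})=c_{nk}$ for every $z_{nk}\in\mathcal{F}_n^\prime$, so the interpolation condition is met.

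The remaining task is the norm estimate. By Cauchy--Schwarz,
\begin{equation*}
\babs{f_n(z)}^2\le\Bigl(\sum_{z_j\in\mathcal{F}_n^\prime}\babs{c_{nj}}^2\babs{L_j(z)}\Bigr)\Bigl(\sum_{z_j\in\mathcal{F}_n^\prime}\babs{L_j(z)}\Bigr).
\end{equation*}
By Lemma \ref{2nlem}, the second factor is bounded by a constant $C=C(\eps,s)$ uniformly in $z$ for all $n\ge n_0(\eps)$. Integrating and using Fubini together with Lemma \ref{1nlem} gives
\begin{equation*}
\|f_n\|^2\le C\sum_{z_j\in\mathcal{F}_n^\prime}\babs{c_{nj}}^2\|L_j\|_{L^1}\le \frac{C'}{n}\sum_{z_j\in\mathcal{F}_n^\prime}\babs{c_{nj}}^2,
\end{equation*}
which, since $(1+2\eps)$ is a fixed constant, is the required estimate $\|f_n\|^2\le \frac{C''}{n(1+2\eps)}\sum_j\babs{c_{nj}}^2$. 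This proves that $\mathcal{F}^\prime$ is $(1+2\eps)$-interpolating.

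The whole argument is a standard Schur test once the peak polynomials $L_j$ are in hand, so there is no substantial new obstacle: the genuine work was already done in constructing the $L_j$ (the factor $(\mathbf{K}_{\eps n}(z,z_j)/\mathbf{K}_{\eps n}(z_j,z_j))^2$ is precisely what localizes $\ell_j$ near $z_j$) and in establishing Lemmas \ref{1nlem} and \ref{2nlem}, which in turn rely on the pointwise diagonal lower bound and the off-diagonal exponential decay of the correlation kernel recorded in Lemma \ref{cor8.2}, plus the uniform separation of $\mathcal{F}_n$ from Lemma \ref{jaa}. Small care is needed only to verify that for $n$ large enough the restriction $z_j\in S_n$ indeed ensures both that $\mathbf{K}_{\eps n}(z_j,z_j)\gtrsim n$ holds and that the off-diagonal damping in Lemma \ref{cor8.2}(ii) is available with $\delta\ge 2\delta_n$; this is what forces the passage from $\mathcal{F}_n$ to $\mathcal{F}_n^\prime$ in the statement.
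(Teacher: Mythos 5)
Your proposal is correct and follows essentially the same route as the paper: you build the identical interpolant $f_n=\sum_j c_{nj}L_j$ from the peak-modified Lagrange functions \eqref{ljd} and control it by exactly the two bounds of Lemma \ref{1nlem} and Lemma \ref{2nlem}. The only inessential difference is the last step, where you get the $\ell^2_{m_n}\to L^2$ bound of order $1/\sqrt{n}$ via Cauchy--Schwarz (Schur's test), while the paper deduces the same bound by Riesz--Thorin interpolation between the $\ell^1\to L^1$ and $\ell^\infty\to L^\infty$ estimates.
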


\begin{proof} Write $\mathcal{F}_n\cap S_n=\{z_{n1},\ldots,z_{nm_n}\}$ and take a sequence $c=(c_j)_{j=1}^{m_n}$. Consider the
operator $T:\C^{m_n}\to L^1+L^\infty$ defined by $T(c)=\sum_j c_jL_j$,
where $L_j$ are given by \eqref{ljd}. In view of Lemma \ref{1nlem}
$$\|T\|_{\ell^1_{m_n}\to L^1}\le \sup \|L_j\|_{L^1}\le C/n,$$
and by Lemma \ref{2nlem},
$$\|T\|_{\ell^\infty_{m_n}\to L^\infty}\le \|F_n\|_{L^\infty}\le C.$$
By the Riesz--Thorin theorem, we conclude that
$$\|T\|_{\ell^2_{m_n}\to L^2}\le C/\sqrt{n}.$$
We have shown that, if $f=T(c)$, then $f\in \tilde{H}_{n(1+2\eps)}$, $f(z_{nj})=c_j$ for all $j\le m_n$ and
$$\int \babs{f}^2\le \frac C n\sum_{j=1}^{m_n}\babs{f(z_{nj})}^2.$$
I.e., $\mathcal{F}^\prime$ is $(1+2\eps)$-interpolating.
\end{proof}

\subsection{Proof of Lemma \ref{mt2}(3)}\label{polish} Let $\mathcal{F}=\{\mathcal{F}_n\}$ where the $\mathcal{F}_n$
are $n$-Fekete sets.
We will prove that $\mathcal{F}$ is of class $M_{S_n,1-2\eps}$ whenever $0<\eps<1/2$ and $\eta>0$ (for the definition of this class, see Definition
\ref{mdef}).

Fix a function $f\in\tilde{H}_{n(1-2\eps)}$ with $\eps>0$ small
and a point
$z\in S_n$, and consider the weighted polynomial
$$g_z(\zeta)=f(\zeta)\cdot \lpar \frac {\mathbf{K}_{n\eps}(\zeta,z)} {\mathbf{K}_{n\eps}(z,z)}\rpar^2\in \tilde{H}_{n}.$$

By Lagrange's interpolation formula,
$$g_z(\zeta)=\sum_{j=1}^{n} g_z(z_j)\ell_j(\zeta),$$
where $\ell_j$ is given by \eqref{e71}. It follows that
\begin{equation*}
f(z)=g_z(z)=\sum_{j=1}^{n} f(z_j)\tilde{L}_j(z)\quad \text{where}\quad \tilde{L}_j(z)=\lpar \frac {\mathbf{K}_{n\eps}(z_j,z)} {\mathbf{K}_{n\eps}(z,z)}\rpar^2\ell_j(z).\end{equation*}
This gives (by \eqref{e72})
\begin{equation*}
\babs{f(z)}\le \sum_{j=1}^{n}\babs{f(z_j)}\frac {\mathbf{B}_{n\eps}(z;z_j)}
{\mathbf{K}_{n\eps}(z,z)}\e^{-n(Q(z)-\wh{Q}(z))/2},\end{equation*}
where $\mathbf{B}_{n\eps}(z;w):=\babs{\mathbf{K}_{n\eps}(z,w)}^2/\mathbf{K}_{n\eps}(z,z)$ is the "Berezin kernel'' (see the next section).

\begin{lem} \label{aple} Suppose that $z_j\in S$ and let
$$\tilde{V}_j(z):=\babs{\tilde{L}_j(z)}=\frac {\mathbf{B}_{n\eps}(z;z_j)}
{\mathbf{K}_{n\eps}(z,z)}.$$
There are then constants $C$ and $n_0=n_0(\eps)$ such that
\begin{equation}\label{1sharp}\left\|\tilde{V}_j\right\|_{L^1(S_n)}\le C/n,\quad n\ge n_0.\end{equation}
\end{lem}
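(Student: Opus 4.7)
The strategy is to exploit three pieces of information about the correlation kernel $\mathbf{K}_{n\eps}$: the reproducing property, the universal pointwise upper bound of Lemma \ref{cor8.2}(i), and a pointwise \emph{lower} bound for the diagonal on $S_n$ of the form $\mathbf{K}_{n\eps}(z,z)\ge c\, n\eps$. The last of these is precisely the interior asymptotic for the correlation kernel (Lemma \ref{l1}, already invoked in the proof of Lemma \ref{2nlem}), which holds uniformly for $z\in S$ with $\dist(z,\partial S)\ge 2\delta_n$, that is for $z\in S_n$.

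With these in hand the bound becomes essentially one line. By the definition of $\mathbf{B}_{n\eps}$ and of $\tilde V_j$,
\begin{equation*}
\tilde V_j(z)=\frac{|\mathbf{K}_{n\eps}(z,z_j)|^2}{\mathbf{K}_{n\eps}(z,z)^2}.
\end{equation*}
First I apply the lower bound $\mathbf{K}_{n\eps}(z,z)\ge c\,n\eps$ on $S_n$ to the denominator, then enlarge the domain of integration from $S_n$ to $\C$ and apply the reproducing property:
\begin{equation*}
\int_{S_n}\tilde V_j(z)\,\dA(z)\le \frac{1}{(c\,n\eps)^2}\int_\C |\mathbf{K}_{n\eps}(z,z_j)|^2\,\dA(z)=\frac{\mathbf{K}_{n\eps}(z_j,z_j)}{(c\,n\eps)^2}.
\end{equation*}
Finally, since $z_j\in S$ so $Q(z_j)=\wh{Q}(z_j)$, the universal upper bound from Lemma \ref{cor8.2}(i) gives $\mathbf{K}_{n\eps}(z_j,z_j)\le C\,n\eps$. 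Substituting yields $\|\tilde V_j\|_{L^1(S_n)}\le C'/(n\eps)$, which is of the required form $C/n$ with $C$ depending only on $\eps$.

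The only delicate ingredient is the interior lower bound $\mathbf{K}_{n\eps}(z,z)\gtrsim n\eps$ on $S_n$, and this is exactly the content of Lemma \ref{l1} in the paper; the choice of threshold $2\delta_n=2\log^2 n/\sqrt n$ in the definition of $S_n$ is tailored to guarantee it for all sufficiently large $n$, which is why the lemma needs $n\ge n_0(\eps)$. All other steps are completely routine.
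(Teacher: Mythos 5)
Your proof is correct, but it follows a genuinely different (and shorter) route than the paper's. The paper proves Lemma \ref{aple} by a near/far decomposition: on $D(z_j;\delta_n/2)$ it uses the Gaussian approximation \eqref{berezgaus} of the Berezin kernel together with Lemma \ref{l1}, and off that disk (and for the case $\dist(z_j,\d S)\le\delta_n$, where the bulk approximation at $z_j$ is unavailable) it uses the off-diagonal damping of Lemma \ref{cor8.2}(ii) plus the diagonal lower bound to get the pointwise estimate $\tilde V_j\le CV_j$ on $S_n$ (eq. \eqref{thesame}). You instead bound the denominator below by $\mathbf{K}_{n\eps}(z,z)\ge c\,n\eps$ on $S_n$ (legitimate, via Lemma \ref{l1}, exactly as the paper itself does in Lemmas \ref{2nlem} and \ref{aple}), enlarge the integration domain to $\C$, apply the reproducing identity $\int_\C|\mathbf{K}_{n\eps}(\cdot,z_j)|^2\dA=\mathbf{K}_{n\eps}(z_j,z_j)$, and finish with the diagonal upper bound $\mathbf{K}_{n\eps}(z_j,z_j)\le Cn\eps$ from Lemma \ref{cor8.2}(i) and $Q=\wh Q$ on $S$; this is in fact the same mechanism the paper uses for the companion estimate on the $L_j$'s (Lemma \ref{1nlem}), transplanted to the $\tilde L_j$'s, and it has the advantage of treating all $z_j\in S$ uniformly with no case analysis and no appeal to \eqref{berezgaus} or to off-diagonal decay. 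What the paper's longer argument buys is the pointwise bound \eqref{thesame}, which is then cited in the proof of Lemma \ref{3nlem} for the $L^\infty$ estimate on $\tilde F_n$; your argument does not produce that byproduct, so if one adopted your proof one would still have to establish \eqref{thesame} separately there — but that is not a gap in your proof of the present lemma, whose conclusion is only the $L^1$ bound, and your constant's dependence on $\eps$ is exactly what the statement and its application to the class $M_{S_n,1-2\eps}$ allow.
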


\begin{proof} We shall consider two cases: (i) $\dist(z_j,\d S)\ge \delta_n$ and
(ii) $\dist(z_j,\d S)\le \delta_n$.

In case (i) we use the estimates in eq. \eqref{berezgaus} and Lemma \ref{l1}, to conclude
\begin{equation}\label{near}\int_{D(z_j;\delta_n/2)}\frac {\mathbf{B}_{n\eps}(z;z_j)}
{\mathbf{K}_{n\eps}(z,z)}\dA(z)\precsim \int \e^{-n\eps\Delta Q(z)\babs{z-z_j}^2} \dA(z)\le \frac C n,\end{equation}
because $\Delta Q$ is bounded below by a positive constant on $D(z_j;\delta_n/2)$.

Now let $z_j\in S$ be arbitrary and use the off-diagonal damping in Lemma \ref{cor8.2}
coupled with the asymptotic estimate $\mathbf{K}_n(z,z)\sim n$ for $z\in S_n$ (Lemma \ref{l1}) to conclude that there are $C$ and $c>0$ such that
\begin{equation}\label{thesame}\tilde{V}_j(z)\le CV_j(z),\quad z\in S_n,\end{equation}
where $V_j$ is given in \eqref{vjjdef}.

We conclude that
\begin{equation}\label{off}\int_{S_n\setminus D(z_j;\delta_n/2)}\tilde{V}_j(z)\dA(z)\le C\e^{-\frac 1 2 c\sqrt{\eps}\log^2 n}\le 1/n,\end{equation}
provided that $n$ is large enough.
In case (i), the estimate \eqref{1sharp} follows from \eqref{near} and \eqref{off}; case (ii) is immediate from \eqref{off}.
\end{proof}

By Lemma \ref{2nlem} and the estimate \eqref{thesame}, one immediately deduces the following lemma.

\begin{lem}\label{3nlem} Let
$$\tilde{F}_n(z)=\sum_{j=1}^n\tilde{V}_j(z).$$
There are then constants $C$ and $n_0=n_0(\eps)$ such that
$\|\tilde{F}_n\|_{L^\infty(S_n)}\le C$ when $n\ge n_0$.
\end{lem}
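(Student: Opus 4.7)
The plan is to use the pointwise comparison \eqref{thesame} to reduce the claim to a bound on $\sum_{j=1}^n V_j(z)$, and then re-run the argument already used in the proof of Lemma \ref{2nlem}. For $z \in S_n$, \eqref{thesame} immediately gives
\[
\tilde{F}_n(z) \le C \sum_{j=1}^n V_j(z),
\]
so the task reduces to bounding the right-hand side uniformly in $z \in S_n$ and $n \ge n_0(\eps)$.

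To estimate $\sum_j V_j(z)$, I would split according to whether $|z - z_j| \ge \delta_n$ or not. Points $z_j$ in the far regime each contribute $V_j(z) \le \e^{-c\sqrt{\eps}\log^2 n}$ by \eqref{vjjdef}, and since $|\mathcal{F}_n| = n$, the total contribution of the far part is at most $n \e^{-c\sqrt{\eps}\log^2 n} = o(1)$ as $n \to \infty$. For the near regime, I would copy the trick at the end of the proof of Lemma \ref{2nlem}: since $\mathcal{F}$ is uniformly separated (Lemma \ref{mt2}(1)), the disks $D(z_j; s/\sqrt{n})$ are pairwise disjoint, and for $w \in D(z_j; s/\sqrt{n})$ one has $V_j(z) \precsim \e^{-c\sqrt{n\eps}|z-w|}$ by the triangle inequality. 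Averaging $V_j(z)$ over this disk and summing over $j$ yields
\[
\sum_{j:\,|z-z_j| < \delta_n} V_j(z) \precsim \frac{n}{s^2}\int_\C \e^{-c\sqrt{n\eps}|z-w|}\, \dA(w),
\]
and the change of variables $\zeta = \sqrt{n\eps}(w-z)$ reduces this to a constant depending only on $s$ and $\eps$.

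There is no substantial obstacle here; the argument is essentially copied from Lemma \ref{2nlem}. The only point requiring attention is that the sum defining $\tilde{F}_n$ ranges over all of $\mathcal{F}_n$ rather than just $\mathcal{F}_n \cap S_n$, which is why the far-part tail estimate must be written out separately. The use of uniform separation of $\mathcal{F}$ from Lemma \ref{mt2}(1) is essential, as this provides a separation constant $s$ independent of $n$.
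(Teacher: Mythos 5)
Your proposal is correct and follows essentially the same route as the paper, which deduces the lemma from \eqref{thesame} together with (the argument of) Lemma \ref{2nlem}; you simply spell out the far/near splitting and the separation-plus-disk-averaging step that the paper leaves implicit, including the only point needing care, namely that the sum runs over all of $\mathcal{F}_n$ so the far tail is controlled by $n\,\e^{-c\sqrt{\eps}\log^2 n}=o(1)$.
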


We can now conclude the proof of Lemma \ref{mt2}.

By \eqref{1sharp} and \eqref{e72}, the operator $\tilde{T}:(c_j)_1^{n}\mapsto \sum c_j \tilde{L}_j(z)$ is bounded from $\ell^1_n$ to $L^1(S_n)$, of norm $\le C/n$; by Lemma \ref{3nlem} it is also bounded from
$\ell^\infty_n$ to $L^\infty(S_n)$, of norm $\le C$. By interpolation
it is bounded by $C/\sqrt{n}$ from $\ell^2_n$ to $L^2(S_n)$, i.e., we have
\begin{equation*}
\int_{S_n}\babs{f}^2\le \frac C n \sum_{j=1}^{n}\babs{f(z_j)}^2,\quad f\in \tilde{H}_{n(1-2\eps)}.\end{equation*}

We have shown that the family $\mathcal{F}$ is of class $M_{S_n,1-2\eps}$, which concludes our proof of Lemma \ref{mt2}. $\qed$

\section{Trace estimates for the concentration operator} \label{LATER}

In this section, we fill in the gaps in the hitherto discussion, i.e.
we prove trace formulas for the concentration operator. These follow from estimates for the correlation kernel of a type
which is at this point well-known (see e.g. \cite{B},\cite{A},\cite{AHM},\cite{AHM3}). However, the estimates used here are more elementary,
so it has seemed worthwhile to include a brief account of them.

Fix a point $z$ in a small neighbourhood $\Lambda$ of $S$.
The trace of the concentration operator $\mathbf{K}_{\rho n}^{A_n(z)}:\tilde{H}_{\rho n}\to\tilde{H}_{\rho n}$
is given by
\begin{equation*}\trace\lpar \mathbf{K}_{\rho n}^{A_n(z)}\rpar=\int_{A_n(z)}\mathbf{K}_{\rho n}(\zeta,\zeta)\dA(\zeta),\end{equation*}
while the trace of the composition of this operator with itself is
\begin{equation*}\trace\lpar \mathbf{K}_{\rho n}^{A_n(z)}\circ\mathbf{K}_{\rho n}^{A_n(z)}\rpar=\int_{A_n(z)\times A_n(z)}\babs{\bfK_{\rho n}(\zeta,w)}^2\dA(\zeta)\dA(w).\end{equation*}
Recall that
$S_n=\{z\in S; ~\dist(z,\d S)\ge 2\delta_n\}$ and $\delta_n=\log^2 n/\sqrt{n}.$
We shall prove the following lemmas.

\begin{lem}\label{le1} Let $z\in S_n$. Then, as $n\to\infty$,
$$\trace(\mathbf{K}_{n\rho}^{A_n(z)})=
R^2\rho \Delta Q(z)+O(\eps_n),$$
where $\eps_n=\log^{6}n/\sqrt{n}$.
\end{lem}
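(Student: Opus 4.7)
The starting point is to write the trace explicitly as
$$
\trace\bigl(\mathbf{K}_{n\rho}^{A_n(z)}\bigr) = \int_{D(z;R/\sqrt{n})} \mathbf{K}_{n\rho}(\zeta,\zeta)\,\dA(\zeta),
$$
so the problem reduces to integrating the on-diagonal kernel over a disk of normalized area $R^2/n$. My plan is to substitute a bulk pointwise asymptotic of the form $\mathbf{K}_{n\rho}(\zeta,\zeta) = n\rho\,\Delta Q(\zeta) + \eta_n(\zeta)$, valid for $\zeta$ at distance $\gtrsim \delta_n$ from $\d S$, and then Taylor-expand $\Delta Q(\zeta)=\Delta Q(z)+O(|\zeta-z|)$ using the $C^3$ regularity of $Q$. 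Since $z\in S_n$ and $R/\sqrt{n}\le \delta_n$ for large $n$, every $\zeta\in D(z;R/\sqrt{n})$ is still at distance $\gtrsim \delta_n$ from $\d S$, so the asymptotic applies uniformly on the disk. The main term integrates to $R^2\rho\,\Delta Q(z)$, and the Taylor remainder contributes $O(R^3/\sqrt{n})$, which for fixed $R$ is absorbed in $\eps_n$.

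The technical content is then the quantitative pointwise estimate for $\mathbf{K}_{n\rho}(\zeta,\zeta)$ in the bulk. I would proceed via a local Ginibre reduction: near $\zeta$, decompose
$$
Q(\xi) = h_\zeta(\xi) + \Delta Q(\zeta)\,|\xi-\zeta|^2 + O\bigl(|\xi-\zeta|^3\bigr),
$$
with $h_\zeta$ a harmonic polynomial of degree at most two. Using the extremal characterization $\mathbf{K}_{n\rho}(\zeta,\zeta)=\sup\{|f(\zeta)|^2:f\in\tilde{H}_{n\rho},\|f\|\le 1\}$ together with the off-diagonal damping of Lemma~\ref{cor8.2}(ii), which forces the $L^2$-mass of an extremal $f$ outside $D(\zeta;C\log n/\sqrt{n})$ to be $O(n^{-K})$, the problem reduces to a controlled perturbation of the pure Ginibre weight $n\rho\,\Delta Q(\zeta)|\xi-\zeta|^2$, whose reproducing kernel evaluated at the centre is exactly $n\rho\,\Delta Q(\zeta)$. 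On the localization scale $\log n/\sqrt{n}$, the cubic Taylor remainder for $Q$ contributes a multiplicative error of order $\log^3 n/\sqrt{n}$, producing an asymptotic of the shape $\mathbf{K}_{n\rho}(\zeta,\zeta) = n\rho\,\Delta Q(\zeta)\bigl(1+O(\log^a n/\sqrt{n})\bigr)$ for an absolute $a$.

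The main obstacle is the bookkeeping of logarithmic losses. The power $\log^6 n$ in $\eps_n$ is manufactured from several sources: a factor $\log^2 n$ from the localization radius $\delta_n=\log^2 n/\sqrt{n}$, further $\log$-factors from Cauchy--Schwarz couplings between the Ginibre-comparison identities, and additional losses from propagating the cubic Taylor error through the reproducing identity. Once these are tracked carefully so that $\eta_n(\zeta) = O\bigl(\sqrt{n}\,\log^6 n\bigr)$ uniformly for $\zeta \in D(z;R/\sqrt{n})$, multiplication by the disk area $R^2/n$ turns this pointwise error into $O(\log^6 n/\sqrt{n})$ in the trace, and combining with the Taylor step on $\Delta Q$ yields the claimed expansion. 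The cost $R^2$ on the constant is fixed (not a function of $n$), so it is swallowed by the $O$-notation.
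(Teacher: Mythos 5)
Your reduction---writing the trace as $\int_{A_n(z)}\mathbf{K}_{n\rho}(\zeta,\zeta)\dA(\zeta)$, inserting a pointwise diagonal asymptotic with error $O(\sqrt{n}\log^6 n)$, and Taylor-expanding $\Delta Q(\zeta)$ around $z$---is exactly how the paper obtains Lemma \ref{le1}: it is immediate from the kernel asymptotic of Lemma \ref{l1}, whose error term $C(n\rho)^2\delta_n^3=C\rho^2\sqrt{n}\log^6 n$ is precisely your $\eta_n$. Note that the $\log^6 n$ has a single source, namely $n\delta_n^3$ coming from the cubic Taylor remainder of $Q$ at scale $\delta_n$, not an accumulation of Cauchy--Schwarz losses. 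The gap is in your proof of the pointwise asymptotic itself. The extremal characterization $\mathbf{K}_{n\rho}(\zeta,\zeta)=\sup\{\babs{f(\zeta)}^2:\ f\in\tilde{H}_{n\rho},\ \|f\|\le 1\}$ plus localization yields only the \emph{upper} bound: restricting a unit-norm competitor to $D(\zeta;\delta_n)$ and comparing with the local Gaussian weight gives $\mathbf{K}_{n\rho}(\zeta,\zeta)\le n\rho\Delta Q(\zeta)\lpar 1+O(n\delta_n^3)\rpar$. The matching \emph{lower} bound requires exhibiting an actual element of the finite-dimensional space $\tilde{H}_{n\rho}$---a weighted polynomial of degree $<n\rho$---of unit norm with $\babs{f(\zeta)}^2\ge n\rho\Delta Q(\zeta)(1-o(1))$. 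The natural peak function $\e^{n\rho\psi(\zeta,\bar\xi)}\e^{-n\rho Q(\xi)/2}$ is not in $\tilde{H}_{n\rho}$, so it must be cut off and corrected by solving a $\dbar$-problem with a H\"{o}rmander-type estimate; this is exactly what the paper's appendix does when it compares $K_n$ with $k_z^\#$ via the reproducing property, the cut-offs $\chi_z$, and the $\dbar$-correction $u$. Your phrase ``reduces to a controlled perturbation of the pure Ginibre weight, whose reproducing kernel at the centre is exactly $n\rho\Delta Q(\zeta)$'' hides precisely this step, which is the real content of Lemma \ref{l1}.

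Two further points. Off-diagonal damping (Lemma \ref{cor8.2}(ii)) cannot supply the lower bound: the same damping estimate holds for points at or near $\d S$, where the diagonal value is about half of (or far below) $n\rho\Delta Q$, so damping alone can never force $\mathbf{K}_{n\rho}(\zeta,\zeta)$ up to the Ginibre level. Moreover, applying it to the extremal function $\mathbf{K}_{n\rho,\zeta}/\|\mathbf{K}_{n\rho,\zeta}\|$ to conclude that its mass outside $D(\zeta;C\log n/\sqrt{n})$ is $O(n^{-K})$ presupposes a lower bound on $\|\mathbf{K}_{n\rho,\zeta}\|^2=\mathbf{K}_{n\rho}(\zeta,\zeta)$, i.e.\ a weak form of the very estimate you are proving; this is harmless if you first establish a crude polynomial lower bound, but it needs to be said. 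Once the peak-polynomial/$\dbar$ construction is in place (or Lemma \ref{l1} is simply invoked, as the paper does), the rest of your bookkeeping---pointwise error $\sqrt{n}\log^6 n$ times the $\dA$-area $R^2/n$ of $A_n(z)$, plus $O(R^3/\sqrt{n})$ from the Taylor step, with $R$ held fixed---is correct.
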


\begin{lem}\label{le3} Let $z\in S_n$. There is then a constant $C$ such that
\begin{equation}\label{bjug}\trace\lpar \mathbf{K}_{n\rho}^{A_n(z)}\circ\mathbf{K}_{n\rho}^{A_n(z)}\rpar\ge
R^2\rho \Delta Q(z)\lpar 1-C\eps_n\rpar-CR^4\eps_n+O(R),\end{equation}
as $R\to\infty$, where the $O(R)$ constant is independent of $n$.
\end{lem}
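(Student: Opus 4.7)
The plan is to exploit the reproducing property $\int_\C|\bfK_{n\rho}(\zeta,w)|^2\dA(w)=\bfK_{n\rho}(\zeta,\zeta)$, which gives the identity
$$\trace\!\lpar \bfK_{n\rho}^{A_n(z)}\circ \bfK_{n\rho}^{A_n(z)}\rpar=\trace\!\lpar \bfK_{n\rho}^{A_n(z)}\rpar-\calE_n,\qquad \calE_n:=\int_{A_n(z)}\int_{\C\setminus A_n(z)}|\bfK_{n\rho}(\zeta,w)|^2\dA(w)\dA(\zeta).$$
The proof then splits into (a) a lower bound for the diagonal trace that retains the explicit $R$-dependence in its error term, and (b) the upper bound $\calE_n\precsim R$, uniformly in $n$.

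For (a), I would rerun the argument behind Lemma \ref{le1}, which rests on the bulk asymptotic $\bfK_{n\rho}(\zeta,\zeta)\ge n\rho\,\Delta Q(\zeta)\lpar 1-C\eps_n\rpar$ for $\zeta\in S_n$. Integrating over $A_n(z)$ and Taylor-expanding $\Delta Q(\zeta)=\Delta Q(z)+\nabla\Delta Q(z){\cdot}(\zeta-z)+O(|\zeta-z|^2)$ about $z$, the linear term vanishes by the rotational symmetry of $A_n(z)$ around $z$, so
$$\int_{A_n(z)}\Delta Q(\zeta)\dA(\zeta)=\tfrac{R^2}{n}\Delta Q(z)+O\!\lpar R^4/n^2\rpar.$$
Multiplying by $n\rho(1-C\eps_n)$ and using $1/n\le\eps_n$ for large $n$ gives $\trace(\bfK_{n\rho}^{A_n(z)})\ge R^2\rho\,\Delta Q(z)(1-C\eps_n)-CR^4\eps_n$.

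For (b), I would use Lemma \ref{cor8.2}(ii). Since $z\in S_n$ and, once $n$ is large relative to $R$, $R/\sqrt n\le\delta_n/2$, every $\zeta\in A_n(z)$ satisfies $\dist(\zeta,\d S)\ge\delta_n$, so
$$|\bfK_{n\rho}(\zeta,w)|^2\precsim n^2\exp\!\lpar -c\sqrt n\,\min(|\zeta-w|,\delta_n)\rpar\,\e^{-n(Q(w)-\wh Q(w))}.$$
Write $d(\zeta):=R/\sqrt n-|\zeta-z|$. Contributions from $\{|\zeta-w|\ge\delta_n\}$ and from $w\notin S$ are both $O(n^{-K})$ for any $K$ because of the factor $\e^{-c\log^2 n}$, while radial integration on the near region yields
$$\int_{\C\setminus A_n(z)}|\bfK_{n\rho}(\zeta,w)|^2\dA(w)\precsim\lpar n^{3/2}d(\zeta)+n\rpar\e^{-c\sqrt n\,d(\zeta)}+O(n^{-K}).$$
The polar change of variables $u=\sqrt n\,d(\zeta)=R-\sqrt n|\zeta-z|$ sends the angular-averaged measure $2r\,dr$ on $A_n(z)$ to $2(R-u)/n\,du$ on $[0,R]$; the factors of $n$ cancel and the integral becomes $2\int_0^R(u+1)(R-u)\e^{-cu}du\le CR$, uniformly in $n$. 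Hence $\calE_n\precsim R$.

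Combining (a) and (b) yields the claimed inequality, the $-CR$ being absorbed into the $O(R)$ term of \eqref{bjug}. The main technical obstacle is the delicate $n$-bookkeeping in step (b): the divergent $n^2$ prefactor of $|\bfK_{n\rho}|^2$ must be exactly absorbed by one factor of $1/\sqrt n$ from the off-diagonal exponential decay and another factor of $1/\sqrt n$ from the two-dimensional Jacobian under the substitution $u=\sqrt n\,d(\zeta)$; only this precise cancellation turns what looks a priori like an $n$-divergent boundary layer into the $n$-independent, $R$-linear bound demanded by the statement.
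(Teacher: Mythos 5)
Your proof is correct, but it takes a genuinely different route from the paper's. The paper lower-bounds $\trace\lpar \bfK_{n\rho}^{A_n(z)}\circ\bfK_{n\rho}^{A_n(z)}\rpar$ directly by writing it as $\int_{A_n(z)}\bfK_{n\rho}(\zeta,\zeta)\bigl[\int_{A_n(z)}\mathbf{B}_{n\rho}(\zeta;w)\dA(w)\bigr]\dA(\zeta)$, shrinking the inner domain to the nested disk $D(\zeta;R/\sqrt{n}-\babs{z-\zeta})$, and then replacing the Berezin kernel by the Gaussian heat kernel via the approximation \eqref{berezgaus} (itself a consequence of Lemma \ref{l1}); the $O(R)$ loss comes out of the explicit Gaussian integral $\int_0^R 2r\lpar 1-\e^{-(R-r)^2\rho\Delta Q(z)}\rpar\dr=R^2+O(R)$. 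You instead use the exact reproducing identity $\int_\C\babs{\bfK_{n\rho}(\zeta,w)}^2\dA(w)=\bfK_{n\rho}(\zeta,\zeta)$ to convert the statement into the leakage bound $\calE_n\precsim R$, which you obtain from the off-diagonal damping of Lemma \ref{cor8.2}(ii) alone (the far region and $w\notin S$ being superpolynomially small, the near region handled by the polar substitution $u=\sqrt{n}\,d(\zeta)$, where the two factors of $n^{-1/2}$ do cancel the $n^2$ prefactor exactly as you say); Lemma \ref{l1} is then only needed on the diagonal, exactly as in Lemma \ref{le1}. Your route avoids establishing \eqref{berezgaus} altogether and in fact proves Lemma \ref{le4} directly and in a slightly sharper, uniform-in-$n$ form ($\calE_n\le CR+O(n^{-K})$ for $n\ge n_0(R)$), at the price of using only crude exponential decay where the paper's Gaussian computation retains precise constants (which the paper reuses in its boundary/Ginibre analysis). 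One small caveat: your second-order Taylor expansion of $\Delta Q$ with the rotational-symmetry cancellation implicitly uses more smoothness than the standing $C^3$ assumption on $Q$ provides; this is harmless, since the first-order (Lipschitz) bound $\babs{\Delta Q(\zeta)-\Delta Q(z)}\le C\babs{\zeta-z}$ already gives an error $O(R^3/\sqrt{n})\le CR^4\eps_n$, which is all that \eqref{bjug} requires. Likewise, as in the paper's own argument, points $\zeta\in A_n(z)$ lie only at distance $\ge 3\delta_n/2$ from $\d S$ rather than in $S_n$ itself, so Lemma \ref{l1} and Lemma \ref{cor8.2}(ii) are applied with a marginally smaller distance parameter, which their proofs allow.
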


Combining the lemmas, we obtain our main auxiliary result on trace estimates in the bulk.

\begin{lem}\label{le4} Let $z\in S_n$. There is a constant $C$ such that
$$\trace\lpar \mathbf{K}_{n\rho}^{A_n(z)}\rpar-\trace\lpar \mathbf{K}_{n\rho}^{A_n(z)}\circ\mathbf{K}_{n\rho}^{A_n(z)}\rpar
\le C\lpar R^2\rho \Delta Q(z)+R^4\rpar \eps_n +O(R),$$
where the $O(R)$ constant is independent of $n$.
\end{lem}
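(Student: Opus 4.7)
The plan is to derive Lemma \ref{le4} as a direct arithmetic consequence of lemmas \ref{le1} and \ref{le3}; no fresh kernel estimates are needed at this stage, since all the analytic work has been packaged into the two preceding lemmas. The only subtlety is bookkeeping on the error terms.

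First I would use Lemma \ref{le1} to write
\begin{equation*}
\trace\lpar \mathbf{K}_{n\rho}^{A_n(z)}\rpar = R^2\rho\,\Delta Q(z) + O(\eps_n),
\end{equation*}
with the implied constant uniform for $z\in S_n$. Next, by Lemma \ref{le3} I have the lower bound
\begin{equation*}
\trace\lpar \mathbf{K}_{n\rho}^{A_n(z)}\circ\mathbf{K}_{n\rho}^{A_n(z)}\rpar \ge R^2\rho\,\Delta Q(z)\lpar 1 - C\eps_n\rpar - CR^4\eps_n + O(R).
\end{equation*}

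Subtracting the second line from the first, the leading terms $R^2\rho\,\Delta Q(z)$ cancel exactly, leaving
\begin{equation*}
\trace\lpar \mathbf{K}_{n\rho}^{A_n(z)}\rpar - \trace\lpar \mathbf{K}_{n\rho}^{A_n(z)}\circ\mathbf{K}_{n\rho}^{A_n(z)}\rpar \le CR^2\rho\,\Delta Q(z)\eps_n + CR^4\eps_n + O(\eps_n) + O(R),
\end{equation*}
where on the right-hand side $O(R)$ comes with sign reversed from the lower bound on the squared trace. Since $\eps_n = \log^6 n/\sqrt{n}\to 0$, the stray $O(\eps_n)$ summand is dominated (for instance by the $O(R)$ term once $R \ge 1$, or equivalently it can be folded into the overall constant). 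Rearranging gives exactly
\begin{equation*}
\trace\lpar \mathbf{K}_{n\rho}^{A_n(z)}\rpar - \trace\lpar \mathbf{K}_{n\rho}^{A_n(z)}\circ\mathbf{K}_{n\rho}^{A_n(z)}\rpar \le C\lpar R^2\rho\,\Delta Q(z) + R^4\rpar\eps_n + O(R),
\end{equation*}
as required.

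The main obstacle is conceptual rather than technical: one must be sure that the uniformity in $z\in S_n$ asserted in lemmas \ref{le1} and \ref{le3} is genuinely enough to conclude uniformity in Lemma \ref{le4}. Since $\Delta Q$ is uniformly bounded on the compact set $\Lambda\supset S$, the constants produced above depend only on $\rho$ and on the $C^3$-bounds for $Q$ near $S$, and are in particular independent of both $n$ and $z\in S_n$. Beyond that, the derivation is purely algebraic.
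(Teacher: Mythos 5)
Your proposal is correct and follows essentially the same route as the paper, which obtains Lemma \ref{le4} precisely by combining Lemmas \ref{le1} and \ref{le3}. Your bookkeeping of the error terms (absorbing the stray $O(\eps_n)$ and accounting for the sign-reversed $O(R)$ from the lower bound in Lemma \ref{le3}) is handled correctly, so nothing further is needed.
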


\subsubsection*{Proofs} Define
$$\psi(z,\zeta)=Q(z)+\d Q(z)\cdot (\zeta-z)
+\frac 1 2 \d^2 Q(z)\cdot (\zeta-z)^2,$$
and put
$$\mathbf{K}_{n\rho}^\#(z,w)=n\rho\Delta Q(z)\e^{n\psi(z,\bar{w})-
n(Q(z)+Q(w))/2}.$$
Our proofs of lemmas \ref{le1}--\ref{le4} uses the following asymptotic formula
for the correlation kernel.

\begin{lem}\label{l1} Suppose that $z\in S_n$.
There is then a positive number $C$ independent of $z$ and
$n$ such that for all $w\in D(z;\delta_n)$,
$$\babs{\mathbf{K}_{n\rho}(z,w)-\mathbf{K}^\#_{n\rho}(z,w)}\le C
\lpar n\rho\rpar^2\delta_n^3.$$
\end{lem}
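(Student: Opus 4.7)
The plan is to prove Lemma~\ref{l1} via a H\"ormander $\bar\partial$-construction of Bergman kernel asymptotics: produce an element $F_z\in\tilde{H}_{n\rho}$ whose value at $w\in D(z,\delta_n)$ matches $\mathbf{K}^\#_{n\rho}(w,z)$ up to an error of size $(n\rho)^2\delta_n^3$, and then invoke the reproducing property to identify it with $\mathbf{K}_{n\rho}(w,z)$. The essential inputs are the third-order Taylor expansion of $Q$ at $z$ together with the strict subharmonicity $\Delta Q\ge c_0>0$ on the neighbourhood $\Lambda$ of $S$.

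\textbf{Setup and trial function.} Interpreting $\psi(z,\bar\zeta)$ as the second-order anti-holomorphic polarization of $Q$ at $z$ (so that $\psi(z,\bar z)=Q(z)$), the Taylor expansion reads
$$Q(\zeta) = \psi(z,\bar\zeta) + \overline{\psi(z,\bar\zeta)} - Q(z) + \Delta Q(z)|\zeta-z|^2 + O(|\zeta-z|^3),$$
uniformly for $z$ in a neighbourhood of $S$; this identifies $\mathbf{K}^\#_{n\rho}$ as the ``frozen-coefficient'' reproducing kernel obtained by replacing $Q$ by its second-order Taylor polynomial at $z$, and shows that the third-order discrepancy in the exponent is at most $Cn\rho\delta_n^3$ on $D(z,\delta_n)$. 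Let $\chi$ be a smooth cutoff with $\chi\equiv 1$ on $D(z,\delta_n/2)$, $\supp\chi\subset D(z,\delta_n)$ and $|\nabla\chi|\le C/\delta_n$; define $h_z(\zeta)=\chi(\zeta)\mathbf{K}^\#_{n\rho}(\zeta,z)$ and write $h_z=u_z\,e^{-n\rho Q(\zeta)/2}$. The factor in $u_z$ proportional to $e^{n\rho\overline{\psi(z,\bar\zeta)}}$ is entire; its Taylor polynomial of degree $<n\rho$ approximates it to within an exponentially small error on $D(z,\delta_n)$, so after truncation $u_z$ is a polynomial of degree $<n\rho$ outside $\supp\, d\chi$.

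\textbf{$\bar\partial$-correction and matching.} Apply H\"ormander's weighted $L^2$-estimate with weight $e^{-n\rho Q}$ to obtain a solution $v$ of $\bar\partial v=\bar\partial u_z$ satisfying
$$\int_\C|v|^2 e^{-n\rho Q}\dA \le \frac{1}{n\rho c_0}\int_\C|\bar\partial u_z|^2 e^{-n\rho Q}\dA,$$
where the right-hand side is controlled on the annulus $\{\delta_n/2\le|\zeta-z|\le\delta_n\}$ by the Gaussian decay $|\mathbf{K}^\#_{n\rho}(\zeta,z)|\precsim n\rho\Delta Q(z)\,e^{-cn|\zeta-z|^2}$, giving a super-polynomially small bound of the order $e^{-c\log^4 n}$. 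A standard growth argument (cf.\ Lemma~\ref{BAHM}) shows $v\,e^{n\rho Q/2}$ is a polynomial of degree $<n\rho$, so $F_z:=h_z-v\in\tilde{H}_{n\rho}$. For $w\in D(z,\delta_n)$ the reproducing property gives $F_z(w)=\langle F_z,\mathbf{K}_{n\rho,w}\rangle$, while pointwise $F_z(w)=\mathbf{K}^\#_{n\rho}(w,z)+E(w)$ with $|E(w)|$ bounded by (i) the Taylor contribution $n\rho\delta_n^3\cdot|\mathbf{K}^\#_{n\rho}(w,z)|\precsim (n\rho)^2\delta_n^3$, (ii) the polynomial truncation error (exponentially small), and (iii) $|v(w)|$, controlled by the pointwise-$L^2$ estimate~\eqref{voms} on the disk $D(w,1/\sqrt{n})$ and hence of order $\sqrt{n}\,\|v\|_{L^2}\precsim e^{-c\log^4 n}$. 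Combining these three estimates yields the claimed bound $|\mathbf{K}_{n\rho}(z,w)-\mathbf{K}^\#_{n\rho}(z,w)|\precsim (n\rho)^2\delta_n^3$.

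\textbf{Main obstacle.} The delicate point is aligning the three error sources (third-order Taylor remainder, $\bar\partial$-correction, and polynomial truncation) so that each is subdominant to $(n\rho)^2\delta_n^3$, and simultaneously ensuring that $F_z$ genuinely belongs to $\tilde{H}_{n\rho}$. The hypothesis $z\in S_n$, i.e.\ $\dist(z,\partial S)\ge 2\delta_n$, is precisely what guarantees that the Gaussian tail of $\mathbf{K}^\#$ on scale $\delta_n$ stays comfortably inside the bulk, so that the off-diagonal damping of Lemma~\ref{cor8.2} and the subharmonic growth bounds of Lemma~\ref{BAHM} can be applied without loss.
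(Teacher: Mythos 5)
The core of your argument constructs an element $F_z\in\tilde H_{n\rho}$ that is pointwise close to $\mathbf{K}^\#_{n\rho}(\cdot,z)$, and then "invokes the reproducing property to identify it with $\mathbf{K}_{n\rho}(\cdot,z)$". That last step is a tautology, and this is a genuine gap: the identity $F_z(w)=\langle F_z,\mathbf{K}_{n\rho,w}\rangle$ holds for \emph{every} element of $\tilde H_{n\rho}$ and gives no information about the distance from $F_z$ (or from $\mathbf{K}^\#_{n\rho}(\cdot,z)$) to the true kernel $\mathbf{K}_{n\rho}(\cdot,z)$; the space contains many functions pointwise close to $\mathbf{K}^\#$ that are far from the reproducing kernel. (Symptomatically, your error budget for $E(w)=F_z(w)-\mathbf{K}^\#_{n\rho}(w,z)$ lists a "Taylor contribution", but in your construction $h_z=\chi\,\mathbf{K}^\#_{n\rho}(\cdot,z)$ agrees with $\mathbf{K}^\#_{n\rho}(\cdot,z)$ exactly where $\chi=1$; the Taylor remainder must enter elsewhere.) What is missing is precisely the heart of the paper's proof: an \emph{approximate reproducing property} of the frozen kernel, i.e.\ an estimate of the form
$$\babs{\,f(z)-\langle \chi_z f,\,k_z^\#\rangle_{nQ}\,}\le C\,n^{3/2}\delta_n^3\,\|f\|_{nQ}\,\e^{nQ(z)/2}$$
for $f$ holomorphic on the support of $\chi_z$, which the paper obtains by writing the Gaussian as a $\dbar$-derivative and applying Cauchy's formula — this is where the $O(\babs{\zeta-z}^3)$ remainder and the power $n^{3/2}\delta_n^3$ actually enter. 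Testing it against $f=\chi_z k_w$ and using the duality $P_n^\#[\chi_z k_w](z)=\overline{P_n[\chi_z k_z^\#](w)}$, together with $\|k_w\|_{nQ}\precsim\sqrt n\,\e^{nQ(w)/2}$, yields closeness of the \emph{true} kernel to the projection of the cut-off frozen kernel, and only then does a H\"ormander estimate (comparing $\chi_z k_w^\#$ with its projection, error $O(n\e^{-cn\delta_n^2})$) complete the proof. In your scheme the analogous missing step is to verify $\babs{\langle f,F_z\rangle-f(z)}\precsim n^{3/2}\delta_n^3\,\|f\|_{L^2}$ for all $f\in\tilde H_{n\rho}$ and then take $f=\mathbf{K}_{n\rho,w}$, whose norm is $\mathbf{K}_{n\rho}(w,w)^{1/2}\precsim\sqrt n$; that computation is exactly what you skipped.

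Two further points, secondary but not cosmetic. First, your membership argument is unjustified: finiteness of $\int\babs{v}^2\e^{-n\rho Q}$ does not force $v\,\e^{n\rho Q/2}$ to lie in $\tilde H_{n\rho}$ — already for $Q=\babs{z}^2$ the weighted $L^2$ space of entire functions contains non-polynomials, and Lemma~\ref{BAHM} is a maximum principle for weighted polynomials, not a Liouville-type statement. To keep the corrected function in $\tilde H_{n\rho}$ one must solve the $\dbar$-problem with a weight of logarithmic growth at infinity (built from $\wh Q$, which equals $Q$ near the support of the data when $z$ is in the bulk); this is the polynomially constrained H\"ormander estimate of \cite{B}, \cite{A} that the paper invokes. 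Second, with your cutoff ($\chi\equiv1$ only on $D(z;\delta_n/2)$, support in $D(z;\delta_n)$) the identity $F_z(w)=\mathbf{K}^\#_{n\rho}(w,z)+E(w)$ and the interior holomorphy needed to apply \eqref{voms} to $v$ fail for $w$ in the outer half of $D(z;\delta_n)$, whereas the lemma demands all $w\in D(z;\delta_n)$; the paper therefore takes $\chi_z\equiv1$ on $D(z;3\delta_n/2)$ with support in $D(z;2\delta_n)$, which is also why the stronger distance hypothesis to $\d S$ appears. These last two items are fixable; the missing approximate-reproducing/duality step is the essential flaw.
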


The statement is a suitably modified version of \cite{AHM3}, Theorem 3.2, but it does not follow immediately because the regularity assumption on $Q$ is relaxed in our situation.
A short proof is given in the appendix. Related bulk expansions for correlation kernels are well known, see
 \cite{B}, \cite{A} and the references given there.

Observe that Lemma \ref{le1} is immediate from Lemma \ref{l1}. It remains to prove Lemma \ref{le3}. To this end,
we can apply arguments from \cite{AHM3}, \textsection 3.2. To avoid unnecessary repetition, we shall be brief.

The \textit{Berezin kernel} rooted at a point $\zeta\in \C$ is given by
\begin{equation*}w\mapsto \mathbf{B}_{\rho n}(\zeta;w):=\frac {\babs{\bfK_{\rho n}(\zeta,w)}^2} {\bfK_{\rho n}(\zeta,\zeta)}.\end{equation*}
Notice that
$\int_\C \mathbf{B}_{\rho n}(\zeta;w)\dA(w)=1,$
and that we can write
\begin{equation*}\trace\lpar \mathbf{K}_{\rho n}^{A_n(z)}\circ\mathbf{K}_{\rho n}^{A_n(z)}\rpar=\int_{A_n(z)}
\mathbf{K}_{\rho n}(\zeta,\zeta)\left[\int_{A_n(z)}\mathbf{B}_{\rho n}(\zeta;w)\dA(w)\right]\dA(\zeta).\end{equation*}

Now consider the "heat kernel''
$$\mathbf{G}_{n\rho}(\zeta;w)=n\rho\Delta Q(\zeta)\e^{-n\rho\Delta Q(\zeta)\cdot \babs{\zeta-w}^2}.$$
Using Lemma \ref{l1}, one easily proves that
\begin{equation}\label{berezgaus}\mathbf{B}_{n\rho}(z;w)=\mathbf{G}_{n\rho}(z;w)\cdot \lpar 1+ O(n\delta_n^3)\rpar+O(n^2\delta_n^3),\quad  w\in D(z;\delta_n).
\end{equation}

Next notice that $w\mapsto \mathbf{G}_{n\rho}(\zeta;w)$ is a probability density on $(\C,\dA)$ and that
$\int_{D(\zeta;R/\sqrt{n})}\mathbf{G}_{n\rho}(\zeta;w)\dA(w)=1-\e^{-R^2\rho\Delta Q(\zeta)}.$
Combining with \eqref{berezgaus} we then have, for $\zeta\in S_n$.
$$\int_{D(\zeta;R/\sqrt{n})}\mathbf{B}_{n\rho}(\zeta;w)\dA(w)\ge \lpar 1-\e^{-R^2\rho\Delta Q(\zeta)}\rpar(1-C\eps_n)+R^2\cdot O(\eps_n).$$

We can now continue to estimate
\begin{equation*}\begin{split}\trace&\lpar\mathbf{K}_{n\rho}^{A_n(z)}\circ\mathbf{K}_{n\rho}^{A_n(z)}\rpar\ge
\int_{A_n(z)}\mathbf{K}_{n\rho}(\zeta,\zeta)\left[\int_{D(\zeta;R/\sqrt{n}-\babs{z-\zeta})} \mathbf{B}_{n\rho}(\zeta;w)\dA(w)\right]\dA(\zeta)=\\
&=\int_{D(z;R/\sqrt{n})}(n\rho\Delta Q(\zeta)+O(n\eps_n))\left[
\int_{D(\zeta;R/\sqrt{n}-\babs{z-\zeta})}\lpar \mathbf{G}_{n\rho}(\zeta;w)\cdot \lpar 1+ O(\eps_n)\rpar+ O(n\eps_n)\rpar \dA(w)\right]\dA(\zeta)=\\
&=\int_{D(z;R/\sqrt{n})} (n\rho\Delta Q(\zeta)+O(n\eps_n))\lpar 1-\e^{-\lpar R-\sqrt{n}\babs{z-\zeta}\rpar^2\rho\Delta Q(z)}+R^2\cdot O(\eps_n)\rpar\dA(\zeta).\\
\end{split}
\end{equation*}
Changing variables by $\omega=\sqrt{n}(\zeta-z)$ and writing $r=\babs{\omega}$, we conclude that the dominating term,
 as $n\to\infty$, in the last integral is
$$\rho\Delta Q(z)\int_0^R 2r\lpar 1-\e^{-(R-r)^2\rho \Delta Q(z)}\rpar\diff r=\rho\Delta Q(z)\lpar R^2+O(R)\rpar.$$
The estimate \eqref{bjug} follows from this. $\qed$

\section{The Ginibre case}

In this section we prove lemmas \ref{h11} and \ref{h12}, and, as a consequence, Theorem \ref{MTH2}. We start with some preliminaries on real analytic potentials.

\subsection{Real analytic potentials} Let $Q$ be real-analytic in some neighbourhood $S$ of the droplet.
Consider the function ("joint intensity $k$-point function'')
$$R_{n\rho}^k(\xi_1,\ldots,\xi_k)=\det(\mathbf{K}_{n\rho}(\xi_i,\xi_j))_{i,j=1}^k.$$
We now fix a convergent sequence $(z_n)_1^\infty$ in $S$ and consider the rescaled functions
$$\wh{R}_{n\rho}^k(\zeta_1,\ldots,\zeta_k)=\frac 1 {(n\rho)^k} R_{n\rho}^k(z_n+\zeta_1/\sqrt{n\rho},\ldots,z_n+\zeta_k/\sqrt{n\rho}).$$
We also define the function (\textit{Ginibre$(\infty)$-correlation kernel})
$$\mathbf{k}(\zeta,\eta)=\e^{\zeta\bar{\eta}-\babs{\zeta}^2/2-\babs{\eta}^2/2}.$$
We will use the following lemma.

\begin{lem} \label{bs1} Assume that $\lim_{n\to\infty}\sqrt{n}\dist(z_n,\d S)=+\infty$. Then
for each $k\ge 1$,
\begin{equation}\label{thle1}\lim_{n\to\infty} \wh{R}_{n\rho}^k(\zeta_1,\ldots,\zeta_k)= \det(\mathbf{k}(\zeta_i,\zeta_j))_{i,j=1}^k\end{equation}
with uniform convergence on compact subsets of $\C^k$.
\end{lem}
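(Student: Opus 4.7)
The plan is to exploit the determinantal structure to reduce convergence of the $k$-point function to that of the rescaled one-point kernel, then invoke Lemma~\ref{l1} to replace the full kernel by the explicit model $\mathbf{K}^{\#}_{n\rho}$, and finally Taylor-expand the exponent of the model around $z_n$.

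Since $R^k_{n\rho}(\xi_1,\dots,\xi_k)=\det(\mathbf{K}_{n\rho}(\xi_i,\xi_j))_{i,j=1}^k$, factoring $1/(n\rho)$ from each of the $k$ rows gives
\begin{equation*}
\widehat R^k_{n\rho}(\zeta_1,\dots,\zeta_k) = \det\!\bigl(\widehat K_{n\rho}(\zeta_i,\zeta_j)\bigr)_{i,j=1}^k,\qquad
\widehat K_{n\rho}(\zeta,\eta) := \tfrac{1}{n\rho}\mathbf{K}_{n\rho}\!\bigl(z_n+\tfrac{\zeta}{\sqrt{n\rho}},\,z_n+\tfrac{\eta}{\sqrt{n\rho}}\bigr).
\end{equation*}
Any replacement $\widehat K_{n\rho}(\zeta,\eta)\mapsto c(\zeta)\,\overline{c(\eta)}\,\widehat K_{n\rho}(\zeta,\eta)$ with $|c|\equiv 1$ leaves every such determinant invariant (extract $c(\zeta_i)$ from row $i$ and $\overline{c(\zeta_j)}$ from column $j$). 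It is thus enough to show that, modulo a unimodular cocycle, $\widehat K_{n\rho}(\zeta,\eta)\to\mathbf{k}(\zeta,\eta)$ uniformly on compact subsets of $\C^2$.

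Under the hypothesis $\sqrt n\,\dist(z_n,\partial S)\to+\infty$ we have $z_n\in S_n$ for large $n$; moreover, if $(\zeta,\eta)$ ranges over a fixed compact set then both rescaled arguments lie at distance $O(1/\sqrt n)\ll\delta_n$ from $z_n$, hence in $S_n$ too. Lemma~\ref{l1} then yields $\mathbf{K}_{n\rho}(z,w)=\mathbf{K}^{\#}_{n\rho}(z,w)+O((n\rho)^2\delta_n^3)$, so after dividing by $n\rho$ the remainder is $O(\log^6 n/\sqrt n)\to 0$ uniformly on compacts, and it remains to analyze $\widehat K^{\#}_{n\rho}$. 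Real-analyticity of $Q$ justifies Taylor-expanding the exponent $n\psi(z,\bar w)-\tfrac{n}{2}(Q(z)+Q(w))$ through second order in $u=z-z_n$ and $v=w-z_n$, since the cubic remainder contributes $O(n|u|^3+n|v|^3)=O(1/\sqrt n)$ on compacts. A direct grouping of terms shows that the real part of this exponent equals $-\tfrac{\Delta Q(z_n)}{2\rho}|\zeta-\eta|^2$, while the imaginary part decomposes as $\tfrac{\Delta Q(z_n)}{\rho}\im(\zeta\bar\eta)+\bigl(f_n(\zeta)-f_n(\eta)\bigr)$, where $f_n$ collects the $\sqrt n$-sized linear contribution coming from $\partial Q(z_n)$ together with the bounded quadratic contribution coming from $\partial^2 Q(z_n)$.

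In the Ginibre regime $Q(z)=|z|^2$, $\Delta Q\equiv 1$, $\rho=1$, the prefactor $\Delta Q(z)\to 1$ and the first two contributions above assemble into the Ginibre exponent $\zeta\bar\eta-\tfrac12(|\zeta|^2+|\eta|^2)$, so that, setting $c_n(\zeta):=e^{if_n(\zeta)}$ (unimodular),
\begin{equation*}
\widehat K_{n\rho}(\zeta,\eta) = c_n(\zeta)\,\overline{c_n(\eta)}\,\mathbf{k}(\zeta,\eta)\,\bigl(1+o(1)\bigr)
\end{equation*}
uniformly on compacts of $\C^2$. Inserting this into the determinantal identity of the first step and invoking the cocycle invariance produces \eqref{thle1} with uniform convergence on compacts. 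The main technical point is to verify that the apparently divergent $\sqrt n$-term generated by $\partial Q(z_n)$, together with the quadratic contribution from $\partial^2Q(z_n)$, really organizes itself into the separable, gauge-trivial piece $f_n(\zeta)-f_n(\eta)$ rather than leaving a genuine obstruction; this is precisely where real-analyticity is used, via the existence of a polarization $\psi(z,\bar w)$ that agrees with $Q$ through second order in the appropriate holomorphic/antiholomorphic directions.
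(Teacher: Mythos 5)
There is a genuine gap, and it is located exactly at the step where you write ``Under the hypothesis $\sqrt n\,\dist(z_n,\partial S)\to+\infty$ we have $z_n\in S_n$ for large $n$.'' This is false: membership in $S_n$ requires $\dist(z_n,\partial S)\ge 2\delta_n=2\log^2 n/\sqrt n$, while the hypothesis of the lemma only gives $\dist(z_n,\partial S)\gg 1/\sqrt n$; a sequence with, say, $\dist(z_n,\partial S)\asymp \log\log n/\sqrt n$ satisfies the hypothesis but never enters $S_n$. Consequently Lemma \ref{l1}, whose hypotheses are $z\in S_n$ and $w\in D(z;\delta_n)$, is simply not available in this intermediate regime, and your replacement of $\mathbf{K}_{n\rho}$ by $\mathbf{K}^{\#}_{n\rho}$ with error $O((n\rho)^2\delta_n^3)$ has no justification there. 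Your argument (determinantal reduction to the kernel modulo a unimodular cocycle, bulk expansion via Lemma \ref{l1}, Taylor expansion of the exponent) does correctly cover the cases where $z_n$ stays at distance $\gtrsim\log^2 n/\sqrt n$ from $\partial S$, which is also how the paper treats those cases; but the content of the lemma is precisely that the bulk (Ginibre) limit persists all the way down to distances $d_n$ with $\sqrt n\,d_n\to\infty$, and for that the paper invokes sharper near-boundary kernel asymptotics valid for real-analytic potentials (the proof of Lemma 4.4 of \cite{AHM3}, or Theorem 2.1 of \cite{A}) together with the rescaling argument of \cite{AHM2}, \S 7.5.

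Relatedly, you misplace the role of real-analyticity: the polarization $\psi(z,\bar w)$ used in $\mathbf{K}^{\#}_{n\rho}$ is just the second-order Taylor polynomial of $Q$ and needs only the standing $C^3$ assumption, so real-analyticity buys you nothing in the bulk computation you carry out. It is needed exactly for the near-boundary regime you skipped, where the $C^3$ bulk expansion of Lemma \ref{l1} breaks down. A secondary point: your final assembly is done only for $Q=|z|^2$, $\rho=1$, whereas the statement concerns a general real-analytic potential (and general $\rho$); in the general case the same cocycle computation produces the factor $\Delta Q(z_n)$ in both the prefactor and the exponent, which should be addressed (or the normalization of the rescaling discussed) rather than silently specialized away.
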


\begin{proof} In the case when $z_n$ converges to a point of the interior of $S$, the lemma follows from Lemma \ref{l1}.
The same is true if $z_n$ converges to a point of $\d S$ and $\dist(z_n,\d S)\ge \log^2 n/\sqrt{n}$. In the general
case we can use the sharper asymptotic estimate 
for the correlation kernel corresponding to a real analytic potential from the proof of Lemma 4.4 of \cite{AHM3} (or Theorem 2.1 of \cite{A}), together with the rescaling argument in \cite{AHM2}, \textsection 7.5.
Details are omitted.
\end{proof}

\subsection{Trace estimates near the boundary: Ginibre case} \label{hereis} We now specialize to $Q(z)=\babs{z}^2$.
We shall write
$$s_n(\zeta)=\sum_{k=0}^{n-1}\frac {\zeta^k} {k!}.$$
It is well known that
$\mathbf{K}_n(z,w)=
ns_n(nz\bar{w})\e^{-n\frac {\babs{z}^2+\babs{w}^2} 2}.$
We fix a point $z_0\in\T$ and consider
the function
$$\wt{R}_{n\rho}^k(\zeta_1,\ldots,\zeta_k)=\frac 1 {(n\rho)^k}R_{n\rho}^k(z_0(1+\zeta_1/\sqrt{n\rho}),\ldots,z_0(1+\zeta_k/\sqrt{n\rho})).$$
We will denote by
$$\Phi(z)=\frac 1 2 \erfc\lpar -\frac z {\sqrt{2}}\rpar.$$
This is the analytic continuation to $\C$ of the d.f. of a standard normal random variable. Also let
$$\mathbf{F}(z,w)=\e^{z\bar{w}-\babs{z}^2/2-\babs{w}^2/2}\Phi(-z-\bar{w}).$$

\begin{lem} \label{bs2} Suppose that $Q=\babs{z}^2$ and that $z_0\in\T$. Then for each $k\ge 1$,
\begin{equation}\label{thle3}\lim_{n\to\infty}\wt{R}_{n\rho}^k(\zeta_1,\ldots,\zeta_k)=\det\lpar \mathbf{F}(\zeta_i,\zeta_j)\rpar_{i,j=1}^k\end{equation}
with uniform convergence on compact subsets of $\C^k$.
\end{lem}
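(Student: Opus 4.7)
My approach is to use the explicit Ginibre kernel $\bfK_m(z,w) = m\,s_m(mz\bar w)\,\e^{-m(|z|^2+|w|^2)/2}$ together with a classical Szegő-type asymptotic for the partial exponential sum $s_m$, and to absorb a leftover oscillatory phase into a unimodular cocycle that cancels inside the determinants $\wt R^k_{n\rho}$.

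First I would carry out the rescaling. Substitute $z = z_0(1+\zeta/\sqrt{m})$ and $w = z_0(1+\eta/\sqrt{m})$, where $m=\lfloor n\rho\rfloor$ and $|z_0|=1$; a direct expansion yields
\begin{align*}
mz\bar w &= m + \sqrt m\,(\zeta+\bar\eta) + \zeta\bar\eta,\\
m\bigl(z\bar w - \tfrac12(|z|^2+|w|^2)\bigr) &= i\sqrt m\,(\Im\zeta-\Im\eta) + \zeta\bar\eta - \tfrac12(|\zeta|^2+|\eta|^2).
\end{align*}
Define $c_n(\zeta) = \e^{-i\sqrt m\,\Im\zeta}$, which is unimodular. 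Then $\e^{i\sqrt m(\Im\zeta-\Im\eta)} = \overline{c_n(\zeta)}\,c_n(\eta)$, and this factor drops out of the $k\times k$ determinant $\wt R^k_{n\rho}$ because conjugating rows by $c_n(\zeta_i)$ and columns by $\overline{c_n(\zeta_j)}$ does not change a determinant when $|c_n(\zeta_i)|=1$ for every $i$.

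The decisive analytic input is a Szegő-type asymptotic for the partial exponential sum: for $t$ in any fixed compact subset of $\C$,
\begin{equation*}\e^{-x}s_m(x) = \Phi(-t) + O(m^{-1/2}),\qquad x = m + \sqrt m\,t,\end{equation*}
as $m\to\infty$. This can be derived from the integral representation $\e^{-x}s_m(x) = \Gamma(m)^{-1}\int_x^\infty \zeta^{m-1}\e^{-\zeta}\,\diff\zeta$ by a saddle-point analysis in $\C$; compare \cite{Sz}, \cite{FH}, \cite{BS}, \cite{BM}. Applying it with $x = mz\bar w$, so that $t = \zeta+\bar\eta + O(m^{-1/2})$, and restoring the exponential factor from the rescaling step, we obtain
\begin{equation*}(n\rho)^{-1}\bfK_{n\rho}(z,w)\,c_n(\zeta)\overline{c_n(\eta)}\longrightarrow\e^{\zeta\bar\eta - (|\zeta|^2+|\eta|^2)/2}\,\Phi(-\zeta-\bar\eta) = \mathbf{F}(\zeta,\eta),\end{equation*}
uniformly on compact subsets of $\C^2$. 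Uniform convergence of $\wt R^k_{n\rho}$ to $\det(\mathbf{F}(\zeta_i,\zeta_j))_{i,j=1}^k$ on compact subsets of $\C^k$ then follows from the cocycle cancellation together with continuity of the determinant in its entries.

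The main obstacle is securing the Szegő asymptotic along the critical parabola $x = m + \sqrt m\,t$ for \emph{complex} $t$, including the boundary regime $\Re t = 0$ where the classical real-axis normal approximation must be replaced by a genuine saddle-point analysis in $\C$ with the saddle tracked uniformly on compacts. Once this uniform asymptotic is in hand, the rest is bookkeeping of exponential factors and the invariance of determinants under unimodular cocycle twists.
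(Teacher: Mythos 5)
Your proposal is correct, but it follows a genuinely different route from the paper, because the paper does not prove Lemma \ref{bs2} at all: it simply quotes \cite{BS}, Theorem C.1(2). What you sketch is in effect a self-contained derivation of that quoted result. The rescaling identities $mz\bar w=m+\sqrt m(\zeta+\bar\eta)+\zeta\bar\eta$ and $m\lpar z\bar w-\tfrac12(\babs{z}^2+\babs{w}^2)\rpar=\imag\sqrt m(\im\zeta-\im\eta)+\zeta\bar\eta-\tfrac12(\babs{\zeta}^2+\babs{\eta}^2)$ are right, the cancellation of the unimodular cocycle $c_n(\zeta)=\e^{-\imag\sqrt m\,\im\zeta}$ inside the $k\times k$ determinant is the standard and correct way to dispose of the oscillatory phase, and the reduction to the transition-regime asymptotic $\e^{-x}s_m(x)=\Gamma(m,x)/\Gamma(m)\to\Phi(-t)$ for $x=m+\sqrt m\,t$, uniformly for $t$ in compacts of $\C$, is exactly the needed analytic input (the $O(m^{-1/2})$ mismatch between $\sqrt{n\rho}$ and $\sqrt m$ with $m=\lfloor n\rho\rfloor$ is indeed harmless bookkeeping, as is passing from kernel convergence to convergence of the determinants). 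What your approach buys is independence from the black-box citation and it makes visible why the boundary kernel is the erfc-kernel $\mathbf{F}$; what it costs is that the one genuinely hard ingredient, the uniform complex-$t$ erfc asymptotic of the incomplete gamma ratio on the critical parabola, is itself only asserted with a pointer to a saddle-point analysis. Note that the expansions the paper imports from \cite{BM}, Appendix B (used in Lemma \ref{bmlem0}) are stated away from the transition point, $\babs{\zeta-1}\ge M/\sqrt n$, so they do not directly cover your regime; to close this step cleanly you should invoke a uniform incomplete-gamma asymptotic of Temme--Tricomi type (or the corresponding statements in \cite{FH}, \cite{BS}), after which your proof is complete.
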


A proof is given in  \cite{BS}, Theorem C.1(2).

\begin{rem}\label{fm1} Since $\mathbf{F}$ is the correlation kernel of a det-process, we have
$\babs{\mathbf{F}(z,w)}^2\le \mathbf{F}(z,z)\mathbf{F}(w,w)<1.$
\end{rem}

\begin{rem} \label{lowb} Let $\D^+=\D+D(0;s/\sqrt{n})$ where $s>0$. We can then assert that there is a positive constant $c$ such that $\mathbf{K}_n(z,z)\ge cn$ for all
$z\in \D^+$.
(To see this,
let $r=\babs{z}^2$, so that $\mathbf{K}_n(z,z)=nf(nr)$ where $f(t)=\sum_{j=0}^{n-1}\frac {t^j} {j!}\e^{-t}$.
We have $f^\prime(t)=-t^{n-1}\e^{-t}/(n-1)!$ so $f$ is decreasing on $[0,\infty)$. Also, by Lemma \ref{bs2},
$f(n(1+s/\sqrt{n})^2)\to \Phi(-2s)>0$ as $n\to \infty.$)
\end{rem}

By the preceding lemmas, we conclude the following.

\begin{enumerate}
\item[(i)]
Suppose that $\sqrt{n}(1-\babs{z_n})\to +\infty$. Then
\begin{equation}\label{lemiii}\lim_{n\to\infty}\trace(\mathbf{K}_{n\rho}^{A_n(z)})=R^2\rho
\quad \text{and}\quad \lim_{n\to\infty}\trace(\mathbf{K}_{n\rho}^{A_n(z)}\circ \mathbf{K}_{n\rho}^{A_n(z)})= R^2\rho+O(R).\end{equation}
\item[(ii)]
If $\sqrt{n}(1-\babs{z_n})\to L<+\infty$, then
$$\lim_{n\to\infty}\trace(\mathbf{K}_{n\rho}^{A_n(z)})=\rho\int_{D(L;R)}\Phi(-2\sqrt{\rho}\re \zeta)\dA(\zeta),$$
$$\lim_{n\to\infty}\trace(\mathbf{K}_{n\rho}^{A_n(z)}\circ \mathbf{K}_{n\rho}^{A_n(z)})=\rho^2\iint_{D(L;R)\times D(L;R)}
\e^{-\rho\babs{\zeta-\eta}^2}\babs{\Phi\lpar-\sqrt{\rho}(\zeta+\bar{\eta})\rpar}^2\dA(\zeta)\dA(\eta).$$
\end{enumerate}

We need to compare the integrals in (ii). To this end, fix a sequence $z=(z_n)\in \overline{\D}$ and suppose that the limit
$L=\lim_{n\to\infty}\sqrt{n}(1-\babs{z_n})$ exists and is finite.
Observe that
\begin{equation*}
\lim_{R\to\infty}\lim_{n\to\infty}\frac {\trace(\mathbf{K}_{n\rho}^{A_n(z)})} {R^2}= \lim_{R\to\infty}
\frac \rho {R^2}\int_{D(L;R)} \Phi(-2\sqrt{\rho}\re \zeta)\dA(\zeta).\end{equation*}
To compute the integral in the right hand side, we apply the change of variables $\omega=(\zeta-L)/R$. It yields
$$\frac \rho {R^2}\int_{D(L;R)} \Phi(-2\sqrt{\rho}\re \zeta)\dA(\zeta)=\rho\int_\D
\Phi\lpar-2\sqrt{\rho}R\omega-2\sqrt{\rho}L\rpar \dA(\omega).$$
As $R\to\infty$ the right hand side above converges to
$\rho\int_\D \1_{\{\re\omega<0\}}\dA(\omega)=\frac \rho 2.$
We have shown that
\begin{equation}\label{bjus1}\lim_{R\to\infty}\lim_{n\to\infty}\frac {\trace(\mathbf{K}_{n\rho}^{A_n(z)})} {R^2}=\frac \rho 2.
\end{equation}

We also need to calculate the trace of the composition of $\mathbf{K}_{n\rho}^{A_n(z)}$ with itself.
For this purpose, it will be convenient to use the \textit{Dawson's function}
\begin{equation*}F(t):=\e^{-t^2}\int_0^t\e^{x^2}\diff x,\quad t\in\R.\end{equation*}

\begin{lem}\label{Dawsonne} For all $z , w \in \C$ holds
\begin{equation}\label{fusk}\babs{\mathbf{F}(z,w)}\le \e^{-\babs{z-w}^2/2}+\e^{-\left[\re(z-w)\right]^{2}/ 2}\frac 1 {\sqrt{\pi}}
F\lpar \frac {\im(z-w)} {\sqrt{2}}\rpar.\end{equation}
In particular, $\mathbf{F}(z,w)\to 0$ as $\babs{z-w}\to\infty$.
\end{lem}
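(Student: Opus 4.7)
The plan is to reduce the bound to an estimate for $|\Phi(-z-\bar w)|$ obtained via a standard contour-deformation argument, and then to read off the asymptotic statement from the known decay of Dawson's function and of the Gaussian.

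First I would strip off the Gaussian prefactor. Since
$$\re\bigl(z\bar w - \tfrac12|z|^2 - \tfrac12|w|^2\bigr) = -\tfrac12|z-w|^2,$$
the unimodular phase $e^{i\im(z\bar w)}$ cancels in absolute value, leaving
$|\mathbf F(z,w)| = e^{-|z-w|^2/2}\,|\Phi(-z-\bar w)|$. Using the identity $\Phi(\xi) = \tfrac{1}{\sqrt\pi}\int_{-\xi/\sqrt 2}^{\infty} e^{-t^2}\,dt$, I write the problem in terms of the complex error integral starting from the point $\alpha + i\beta$, where $\alpha := \re(z+w)/\sqrt 2$ and $\beta := \im(z-w)/\sqrt 2$ (observing that $\im \bar w = -\im w$).

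The key step is to deform the contour into two pieces: the vertical segment from $\alpha+i\beta$ to $\alpha$, followed by the real ray from $\alpha$ to $+\infty$. The horizontal piece contributes exactly $\tfrac{\sqrt\pi}{2}\erfc(\alpha) \le \sqrt\pi$, which after division by $\sqrt\pi$ and multiplication by the Gaussian prefactor yields the first term $e^{-|z-w|^2/2}$. For the vertical piece, parametrizing $t = \alpha + iu$ with $u$ running between $0$ and $\beta$ and using $|e^{-(\alpha+iu)^2}| = e^{-\alpha^2 + u^2}$ gives
$$\Bigl|\int_{\alpha+i\beta}^{\alpha} e^{-t^2}\,dt\Bigr| \le \int_0^{|\beta|} e^{-\alpha^2+u^2}\,du = e^{-\alpha^2}\,e^{\beta^2} F(|\beta|),$$
where the last equality is the definition of Dawson's function.

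The final bookkeeping step is the identity
$$-\tfrac12|z-w|^2 + \beta^2 = -\tfrac12[\re(z-w)]^2,$$
immediate from $|z-w|^2 = [\re(z-w)]^2 + [\im(z-w)]^2$ and $\beta^2 = [\im(z-w)]^2/2$. Combining this with $e^{-\alpha^2}\le 1$ converts the vertical-segment estimate into the second term of \eqref{fusk}, and the triangle inequality across the two contour pieces finishes the proof. The ``in particular'' clause then follows: the first term decays like a Gaussian in $|z-w|$, while the second is controlled by $e^{-[\re(z-w)]^2/2}$ if $|\re(z-w)|\to\infty$ (Gaussian decay, since $F$ is bounded on $\R$) and by the well-known asymptotic $F(t)\sim 1/(2t)$ as $|t|\to\infty$ if $|\im(z-w)|\to\infty$.

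There is essentially no obstacle; the only care needed is the sign convention in the vertical parametrization when $\beta<0$ (handled by the absolute value and the oddness of Dawson's function, so the right-hand side should be read with $F(|\im(z-w)|/\sqrt 2)$), and verifying that the two instances of the $\sqrt 2$ scaling in $\beta$ and in the argument of $F$ are consistent with the cancellation $\beta^2 - \tfrac12|z-w|^2 = -\tfrac12[\re(z-w)]^2$.
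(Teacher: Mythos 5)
Your argument is correct and is essentially the paper's own proof: both strip the Gaussian prefactor $\e^{-|z-w|^2/2}$, write $\Phi(-z-\bar w)$ as a Gaussian contour integral split into a real-axis piece (bounded by $1$) and a vertical segment that produces Dawson's function, and then obtain the decay from $F(t)\sim 1/(2t)$ together with the Gaussian factors. Your remark about reading the bound with $F(|\im(z-w)|/\sqrt 2)$ (since $F$ is odd) is a fair point of care that the paper glosses over, but it changes nothing of substance.
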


\begin{proof} We have
$\babs{\mathbf{F}(z,w)}=\e^{-\babs{z-w}^{2}/2}\babs{\Phi\lpar-z-\bar{w}\rpar}.$
By Cauchy's theorem we can unambiguously write
$\Phi(z)=\frac 1 {\sqrt{2\pi}}\int_{-\infty}^z\e^{-\zeta^2/2}\diff \zeta.$
Let $-z-\bar{w}=a+\imag b$ with $a=-\re(z+w)$ and $b=\im(w-z)$, and put $\Gamma=(-\infty,a]\cup [a,a+\imag b]$. An obvious estimate of the integral over $\Gamma$ gives
\begin{equation*}\babs{\Phi\lpar-z-\bar{w}\rpar}\le \Phi(a)+\frac 1 {\sqrt{2\pi}}\int_0^b\e^{t^2/2}\diff t,\end{equation*}
so, since $\Phi(a)\le 1$,
\begin{equation*}
\babs{\mathbf{F}(z,w)}\le \e^{-c^2/2}+\e^{-c^2/2}\frac 1 {\sqrt{2\pi}}\int_0^b\e^{t^2/2}\diff t, \quad c=\babs{z-w}.\end{equation*}
This is equivalent to \eqref{fusk}.

It remains to be shown that $\babs{\mathbf{F}(z,w)}\to 0$ as $\babs{z-w}\to\infty$.
It is well-known that $F$ has asymptotic expansion $F(t)=1/2t+1/4t^3+\cdots$ as
$t\to\infty$ (see \cite{SO}, p. 406), so
$F(t)\to 0$ as $t\to\infty$. Writing $s=\babs{\re(z-w)}$ and $t=\babs{\im(z-w)}$, we notice that it follows from \eqref{fusk} that
\begin{equation*}\babs{z-w}\babs{\mathbf{F}(z,w)}\le \babs{z-w}\e^{-\babs{z-w}^2/2}+
s\e^{-s^2/2}F\lpar t/\sqrt{2}\rpar+\e^{-s^2/2}\cdot tF\lpar t/\sqrt{2}\rpar\le C,\end{equation*}
with a $C$ independent of $z$ and $w$. This finishes the proof, since $\babs{\mathbf{F}}<1$ (Remark \ref{fm1}).
\end{proof}

\begin{lem} \label{lastl} For any $z\in\C$ we have
\begin{equation*}\rho^2\int_\C \e^{-\rho\babs{z-w}^2}\babs{\Phi\lpar -\sqrt{\rho}(z+\bar{w})
\rpar}^2\dA(w)=\rho\Phi(-2\sqrt{\rho}\re z).\end{equation*}
\end{lem}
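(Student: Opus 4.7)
The plan is a direct Gaussian computation after a scaling reduction, with the one genuine subtlety being a Fubini swap that is not absolutely integrable.

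\textit{Reduction.} The rescaling $\tilde z=\sqrt{\rho}\,z$, $\tilde w=\sqrt{\rho}\,w$ absorbs all powers of $\rho$ and reduces the claim to the case $\rho=1$:
\[
\int_\C \e^{-|z-w|^2}\babs{\Phi(-(z+\bar w))}^2\dA(w)=\Phi(-2\,\re z).
\]
The substitution $\xi=w-z$ followed by $\xi\mapsto\bar\xi$ (legitimate since $\overline{\Phi(\zeta)}=\Phi(\bar\zeta)$ by Schwarz reflection, $\Phi$ having real Taylor coefficients) converts this into
\[
f(b):=\int_\C \e^{-|\xi|^2}\babs{\Phi(-\xi-b)}^2\dA(\xi)=\Phi(-b),\qquad b:=2\,\re z\in\R.
\]

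\textit{Computation.} Deforming the defining contour of $\Phi$ onto the horizontal line through its endpoint gives the representation
\[
\Phi(x+iy)=\frac{\e^{y^2/2}}{\sqrt{2\pi}}\int_{-\infty}^{x}\e^{-u^2/2-iuy}\,du,
\]
and applying this with $\xi=p+iq$, $x=-(p+b)$, $y=-q$ yields
\[
\babs{\Phi(-\xi-b)}^2=\frac{\e^{q^2}}{2\pi}\iint_{u,v\le-(p+b)}\e^{-(u^2+v^2)/2}\,\e^{-iq(v-u)}\,du\,dv.
\]
Crucially, the prefactor $\e^{q^2}$ cancels the $\e^{-q^2}$ in $\e^{-|\xi|^2}$. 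Performing the $q$-integration first produces (formally) $\int_\R \e^{-iq(v-u)}\,dq=2\pi\,\delta(v-u)$, collapsing the $(u,v)$-integral to the diagonal $u=v$ and leaving, after the substitution $p=r/\sqrt{2}$,
\[
f(b)=\frac{1}{\sqrt{\pi}}\int_\R \e^{-p^2}\,\Phi\bigl(-\sqrt{2}(p+b)\bigr)\,dp=\bfE\bigl[\Phi(-N-\sqrt{2}\,b)\bigr]
\]
with $N\sim\mathcal N(0,1)$. For an independent standard normal $Z$ this last expectation equals $\mathbf P(Z+N<-\sqrt{2}\,b)=\Phi(-b)$, because $(Z+N)/\sqrt{2}\sim\mathcal N(0,1)$.

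\textit{Main obstacle.} The Fubini swap used to realize the delta function is \emph{not} covered by absolute integrability: the $q$-integral of the absolute integrand diverges. I would justify it by inserting a Gaussian regulator $\e^{-\varepsilon q^2}$, applying Fubini to the now absolutely integrable integrand (the $q$-integral becoming the honest Gaussian $(\pi/\varepsilon)^{1/2}\,\e^{-(v-u)^2/(4\varepsilon)}$, which converges to $2\pi\,\delta(v-u)$), and passing to $\varepsilon\to 0^+$ by dominated convergence on the outer integrations; the factors $\e^{-p^2}$ and $\e^{-(u^2+v^2)/2}$ supply the domination. A conceptually cleaner alternative is to recognise the identity as the reproducing-kernel property $\int_\C |\mathbf F(z,w)|^2\dA(w)=\mathbf F(z,z)=\Phi(-2\,\re z)$ for the boundary Ginibre kernel $\mathbf F$, and to obtain it by passing to the limit in the analogous identity $\int_\C|\bfK_n(z,w)|^2\dA(w)=\bfK_n(z,z)$ with Lemma \ref{bs2} supplying the pointwise convergence and the tail bound of Lemma \ref{Dawsonne} powering the dominated-convergence argument.
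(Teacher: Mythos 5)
Your proposal is correct, but it takes a genuinely different route from the paper's. The paper proves the identity by passing to polar coordinates $w=z+r\e^{\imag\theta}$, integrating by parts in $r$ (the boundary term at $r=\infty$ is killed by the decay from Lemma \ref{Dawsonne}), and then disposing of the remaining term by converting the radial derivative of $\babs{\Phi(-2\re z-r\e^{-\imag\theta})}^2$ into an angular derivative whose $\theta$-integral vanishes; no Fourier representation or non-absolutely-convergent interchange appears. You instead represent $\Phi$ by a Gaussian contour integral along a horizontal line, so that $\e^{-\babs{z-w}^2}\babs{\Phi(-z-\bar w)}^2$ becomes an oscillatory Gaussian multiple integral, collapse it by a delta function, and finish with the probabilistic identity $\mathbf{E}\left[\Phi(-N-\sqrt2\,b)\right]=\Phi(-b)$; you correctly identify that the Fubini step is not absolutely convergent, and your $\e^{-\eps q^2}$ regularization does repair it. One caution: after the regularized $q$-integration the factor $\sqrt{\pi/\eps}\,\e^{-(u-v)^2/4\eps}$ is not dominated uniformly in $\eps$ by an integrable function of $(u,v)$, so the limit $\eps\to0$ should be run as a standard approximate-identity argument (e.g.\ in $h=(u-v)/2$, using that the remaining $(u+v,p)$-integral is a bounded continuous function of $h$) rather than literal dominated convergence in all variables; with that rewording your main argument is complete. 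Your alternative sketch via the reproducing property $\int_\C\babs{\mathbf F(z,w)}^2\dA(w)=\mathbf F(z,z)$, obtained as a limit of $\int_\C\babs{\bfK_n(z,w)}^2\dA(w)=\bfK_n(z,z)$, is conceptually appealing but weaker as stated: Lemma \ref{Dawsonne} bounds only the limit kernel $\mathbf F$, and the uniform-in-$n$ bound of Proposition \ref{zick} decays only like $(1+\sqrt n\babs{z-w})^{-1}$, whose square is not integrable over the plane, so the needed domination for the $n$-dependent kernels would require further estimates. In sum, the paper's computation is shorter and purely elementary, while your main argument is more transparent about the source of the identity (two independent standard Gaussians summing to a Gaussian of variance two) and adapts easily to related moment identities for the erfc kernel.
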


\begin{proof} W.l.o.g. put $\rho=1$ and
consider the integral
\begin{equation*}I(z)=
\int_\C\e^{-\babs{z-w}^2}
\babs{\Phi(-z-\bar{w})}^2\dA(w).
\end{equation*}
Putting $w=z+r\e^{\imag\theta}$ gives
\begin{equation}\label{i1}I(z)=\frac 1 \pi
\int_0^{2\pi}\left[
\int_0^\infty r\e^{-r^2}
\babs{\Phi\lpar-2\re z-r\e^{-\imag\theta}\rpar}^2
\diff r\right]\diff\theta.\end{equation}
Let $J(z)$ be the inner integral,
$J(z)=\int_0^\infty r\e^{-r^2}
\babs{\Phi\lpar-2\re z-r\e^{-\imag\theta}\rpar}^2
\diff r.$
An integration by parts gives
\begin{equation*}J(z)=\left[-\frac 1 2 \e^{-r^2}\babs{\Phi\lpar-2\re z-r\e^{-\imag\theta}\rpar}^2\right]_{r=0}^\infty+
\frac 1 2\int_0^\infty \e^{-r^2}\frac \d {\d r}\babs{\Phi\lpar-2\re z-r\e^{-\imag\theta}\rpar}^2\diff r.\end{equation*}
By Lemma \ref{Dawsonne}, we have for all $\theta$
 \begin{equation*}\lim_{r\to\infty}\e^{-r^2}\babs{\Phi\lpar-2\re z-r\e^{-\imag\theta}\rpar}^2= 0.
 \end{equation*}
This shows that
\begin{equation}\label{i2}J(z)=\frac 1 2 \Phi(-2\re z)+
\frac 1 2\int_0^\infty \e^{-r^2}\frac \d {\d r}\babs{\Phi\lpar-2\re z-r\e^{-\imag\theta}\rpar}^2\diff r.
\end{equation}
But
\begin{equation}\label{i3}\begin{split}\frac \d {\d r}\babs{\Phi\lpar-2\re z-r\e^{-\imag\theta}\rpar}^2=
-\frac 1 {\imag r}\frac \d {\d\theta}\babs{\Phi\lpar-2\re z-r\e^{-\imag\theta}\rpar}^2 .\\
\end{split}
\end{equation}
Using \eqref{i2} and \eqref{i3} in \eqref{i1} we get
\begin{equation*}I(z)=\Phi(-2\re z)+\frac 1 {2\pi}\int_0^\infty \e^{-r^2}\frac \imag r
\left[
\int_0^{2\pi}\frac \d {\d\theta}\babs{\Phi\lpar-2\re z-r\e^{-\imag\theta}\rpar}^2\diff\theta\right]\diff r.
\end{equation*}
The inner integral in the right hand side clearly vanishes, so
$I(z)=\Phi(-2\re z),$
as desired.
\end{proof}

Suppose now that $L=\lim_{n\to\infty}\sqrt{n}(1-\babs{z_n})<\infty$. By the last lemma, then
\begin{equation}\label{bjus0}\lim_{n\to \infty}\trace(\mathbf{K}_{n\rho}^{A_n(z)}\circ\mathbf{K}_{n\rho}^{A_n(z)})=\rho\int_{D(L;R)}\Phi(-2\re \sqrt{\rho}\zeta)(1+o(1))\dA(\zeta)=
\lim_{n\to\infty}\trace(\mathbf{K}_{n\rho}^{A_n(z)})+o(R^2).\end{equation}

We now have the trace estimates needed for our discussion of Beurling--Landau densities close to the boundary.

\subsection{Proof of Lemma \ref{h11}} \label{hereitis} Let $\mathcal{Z}$ be a triangular family contained in $\overline{\D}$.
Fix a sequence $\zeta=(z_n)$ in $\overline{\D}$ and let $L:=\lim_{n\to\infty}\sqrt{n}(1-\babs{z_n})$.

Assume first that $\mathcal{Z}$ is of class $M_{\D,\rho}$. Using the estimate \eqref{sep2p} and the trace estimates
in the preceding subsection, one can finish the argument exactly as in Section \ref{pmainr} above. More precisely,
if $L=+\infty$ we use the estimates in \eqref{lemiii}
to obtain that $D^-(\mathcal{Z},\zeta)\ge \rho$, and if
$L<\infty$ we use instead the estimates in equations \eqref{bjus1} and
\eqref{bjus0} to obtain $D^-(\mathcal{Z},\zeta)\ge \rho/2$.

Similarly, if $\mathcal{Z}$ is $\rho$-interpolating, we repeat the argument in Section \ref{pmainr} using
the trace estimates above. It yields that $D^+(\mathcal{Z},\zeta)\le \rho$ if $L=\infty$ and $D^+(\mathcal{Z},\zeta)\le \rho/2$
if $L<\infty$.
$\qed$

\subsection{Off-diagonal damping in the Ginibre case} \label{odd} The aim of this subsection is to prove the following proposition.

\begin{prop} \label{zick} Let $Q=\babs{z}^2$ be the Ginibre potential. There exists a constant $C$ such that
\begin{equation}\label{true0}\babs{\mathbf{K}_n(z,w)}\le Cn \frac 1 {1+\sqrt{n}\babs{z-w}},\quad z,w\in \D^+=D(0;1+s/\sqrt{n}).\end{equation}
\end{prop}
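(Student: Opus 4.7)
The plan is to exploit the closed form
$$\bfK_n(z,w) = n\,s_n(n z\bar w)\,\e^{-n(|z|^2+|w|^2)/2}, \qquad s_n(\zeta) = \sum_{k=0}^{n-1}\frac{\zeta^k}{k!},$$
together with a summation-by-parts argument. By the rotational invariance $\bfK_n(\e^{\imag\alpha}z,\e^{\imag\alpha}w) = \bfK_n(z,w)$, we may assume $z=r_1\ge 0$ and $w=r_2\e^{-\imag\phi}$ with $r_1,r_2\in[0,1+s/\sqrt n]$ and $\phi\in[-\pi,\pi]$. Setting $\beta := n r_1 r_2$ and using $(|z|^2+|w|^2)/2 = r_1 r_2 + (r_1-r_2)^2/2$, one factorizes
$$|\bfK_n(z,w)| = n\,|S(\phi)|\,\e^{-n(r_1-r_2)^2/2},\qquad S(\phi) := \sum_{k=0}^{n-1}p_\beta(k)\,\e^{\imag k\phi},$$
where $p_\beta(k) = \beta^k\e^{-\beta}/k!$ is the Poisson$(\beta)$ probability mass function.

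The crux is the estimate
$$|S(\phi)|\le\frac{C}{1+\sqrt{\beta}\,|\sin(\phi/2)|}\quad\text{for } \beta\ge 1,\qquad |S(\phi)|\le 1\text{ always.}$$
To prove it, write $\e^{\imag k\phi} = D_k(\phi)-D_{k-1}(\phi)$ where $D_k(\phi) = \sum_{j=0}^k \e^{\imag j\phi}$ satisfies the Dirichlet bound $|D_k(\phi)|\le 1/|\sin(\phi/2)|$. Summation by parts yields
$$S(\phi) = p_\beta(n-1)D_{n-1}(\phi) - \sum_{k=0}^{n-2}\bigl(p_\beta(k+1)-p_\beta(k)\bigr)D_k(\phi),$$
hence $|S(\phi)|\le |\sin(\phi/2)|^{-1}\bigl(p_\beta(n-1) + \sum_k |p_\beta(k+1)-p_\beta(k)|\bigr)$. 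Since $p_\beta(k+1)/p_\beta(k) = \beta/(k+1)$ is monotone decreasing in $k$, the sequence $p_\beta$ is unimodal with mode $\lfloor\beta\rfloor$; its total variation is therefore at most $2\max_k p_\beta(k)$, and Stirling gives $\max_k p_\beta(k)\le C/\sqrt{\beta}$ for $\beta\ge 1$. The trivial inequality $|S(\phi)|\le\sum_k p_\beta(k)\le 1$ takes care of the regime $\sqrt{\beta}|\sin(\phi/2)|\le 1$ (and of $\beta<1$).

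To conclude, the identity $|z-w|^2 = (r_1-r_2)^2 + 4r_1 r_2\sin^2(\phi/2)$ together with the triangle inequality gives
$$\sqrt{n}\,|z-w|\,\le\,\sqrt{n}\,|r_1-r_2|\,+\,2\sqrt{\beta}\,|\sin(\phi/2)|,$$
so by $1+a+b\le(1+a)(1+b)$ one has $1+\sqrt{n}|z-w|\le(1+\sqrt{n}|r_1-r_2|)(1+2\sqrt{\beta}|\sin(\phi/2)|)$. Multiplying the factorization of $|\bfK_n(z,w)|$ by $1+\sqrt{n}|z-w|$ and applying the bound on $|S(\phi)|$ reduces the proposition to the universal estimate $\sup_{x\ge 0}(1+x)\e^{-x^2/2}<\infty$ with $x=\sqrt{n}|r_1-r_2|$. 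The degenerate case $\beta<1$ is handled identically using $|S(\phi)|\le 1$ and $2\sqrt{\beta}|\sin(\phi/2)|\le 2$.

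The main technical step is the Abel-summation estimate on $|S(\phi)|$; everything else is bookkeeping, contingent only on the unimodality of the Poisson law and the Stirling bound $\max_k p_\beta(k)\le C/\sqrt{\beta}$ for $\beta\ge 1$.
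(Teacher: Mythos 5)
Your argument is correct, and it takes a genuinely different route from the paper. The paper proves Proposition \ref{zick} by a case analysis on $\babs{z\bar w-1}$: the trivial regime $\babs{z-w}\le M/\sqrt n$, a geometric lemma comparing $\babs{z-r}$ with $\babs{zr-1}$ on $\D^+$, the Bleher--Mallison $\erfc$-expansion of $s_n(n\zeta)\e^{-n\zeta}$ in the intermediate regime $M/\sqrt n\le\babs{\zeta-1}\le\delta$, and Szeg\H{o}'s classical asymptotics when $\babs{\zeta-1}\ge\delta$; the restriction to $\D^+$ is used in that case-checking. You instead observe that after rotation and the factorization $\babs{\mathbf{K}_n(z,w)}=n\babs{S(\phi)}\e^{-n(r_1-r_2)^2/2}$, the whole matter reduces to bounding the ``Poisson Fourier partial sum'' $S(\phi)=\sum_{k<n}p_\beta(k)\e^{\imag k\phi}$, which you do by Abel summation against the Dirichlet kernel, using only unimodality of the Poisson weights (total variation $\le 2\max_k p_\beta(k)$) and Stirling ($\max_k p_\beta(k)\le C/\sqrt\beta$ for $\beta\ge1$); the elementary identities $\babs{z-w}^2=(r_1-r_2)^2+4r_1r_2\sin^2(\phi/2)$ and $(1+x)\e^{-x^2/2}\le C$ then finish the bookkeeping, including the edge cases $\beta<1$ and $\sin(\phi/2)=0$ via $\babs{S(\phi)}\le1$. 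What your approach buys: it is self-contained (no appeal to the deep asymptotic expansions of \cite{Sz} and \cite{BM}), avoids the case analysis, and in fact proves \eqref{true0} for all $z,w\in\C$, not merely on $\D^+$. What the paper's approach buys: the precise $\erfc$/Szeg\H{o} expansions give sharper information near the edge $\babs{z\bar w-1}\sim n^{-1/2}$ (e.g.\ the Gaussian term $\e^{-n\babs{z-w}^2/2}$), which is of independent use elsewhere, whereas your bound is only of the stated polynomial-decay strength --- exactly what the proposition requires.
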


We may w.l.o.g. assume that the separation constant $s$ has $s\le 1$, this will be done below.

Our proof consists of checking a number of cases. The argument is somewhat lengthy, but straightforward.

It will facilitate to note that we have rotational symmetry $\mathbf{K}_n(z,w)=\mathbf{K}_n(\e^{\imag\theta}z,\e^{\imag\theta}w)$,
so we may w.l.o.g. assume that $w=r$ is real and non-negative when proving \eqref{true0}.

Also notice that we trivially have
\begin{equation}\label{fofo}\babs{\mathbf{K}_n(z,w)}\le Cn\le C(1+M)n\frac 1 {1+\sqrt{n}\babs{z-w}},\qquad \babs{z-w}\le M/\sqrt{n}.\end{equation}
We shall dispose of another simple case.

\begin{lem}\label{ble} Suppose that $z,r\in \D^+$ and $r\ge 1/2$. Then $\babs{z-r}\le 2\babs{zr-1}+4s/\sqrt{n}$.
\end{lem}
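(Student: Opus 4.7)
The plan is to exploit the algebraic identity
$$|z-r|^2 = |zr-1|^2 - (1-|z|^2)(1-r^2),$$
valid for any $z \in \C$ and $r \in \R$, which one verifies in one line by expanding $|zr-1|^2 = r^2|z|^2 - 2r\re(z) + 1$ and $|z-r|^2 = |z|^2 - 2r\re(z) + r^2$. This reduces the lemma to estimating the cross term.

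When $|z|$ and $r$ lie on the same side of $1$ (both $\le 1$ or both $\ge 1$), the cross term is non-negative, so the identity immediately gives $|z-r| \le |zr-1|$, which is stronger than the asserted bound. The substantive case is the mixed one, and by the symmetric roles of $|z|$ and $r$ in the identity, one may assume $|z| \le 1 \le r$.

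In that case the identity reads $|z-r|^2 = |zr-1|^2 + (1-|z|^2)(r^2-1)$. From $r \in \D^+$ we have $r - 1 \le s/\sqrt n$, whence $r^2 - 1 \le 3 s/\sqrt n$. To bound $1 - |z|^2 \le 2(1-|z|)$, the key algebraic move is the decomposition
$$1-|z| = (1-|z|r) + |z|(r-1),$$
in which the second summand is $\le s/\sqrt n$, and the first is either $\le |zr-1|$ (since $|zr-1| \ge \big||zr|-1\big| \ge 1 - |z|r$ when $|z|r \le 1$) or non-positive. This yields $1-|z| \le |zr-1| + s/\sqrt n$, hence $(1-|z|^2)(r^2-1) \le 6(s/\sqrt n)|zr-1| + 6 s^2/n$, which is at most $(|zr-1|+3s/\sqrt n)^2 - |zr-1|^2$. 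Substituting into the identity and taking square roots produces the sharper estimate $|z-r| \le |zr-1| + 3 s/\sqrt n$, from which the lemma follows a fortiori.

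The only real obstacle is the mixed regime, and the crux is the decomposition of $1-|z|$ above, which converts a potentially large quantity into a piece controlled by $|zr-1|$ plus an $s/\sqrt n$ correction. Incidentally, this identity-based approach does not actually use $r \ge 1/2$; that hypothesis would be needed only in an alternative route through $|z-1/r| = |zr-1|/r \le 2|zr-1|$, which requires a separate estimate of $|1/r - r|$.
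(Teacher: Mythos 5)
Your proof is correct, and it follows a genuinely different route from the paper's. The paper works in coordinates: writing $z=x+\imag y$ and assuming first $z,r\in\D$, it uses the two pointwise inequalities $(x-r)^2\le(rx-1)^2$ (valid for $\babs{x}\le 1$, $r\le1$) and $y^2\le 4(ry)^2$ — the latter being exactly where the hypothesis $r\ge 1/2$ enters — to get $\babs{z-r}\le 2\babs{zr-1}$ on the closed disk, and then disposes of the cases where $z$ and/or $r$ lie in the thin ring $\D^+\setminus\D$ by a separate (omitted) perturbation check, which is the source of the $4s/\sqrt{n}$ term. Your exact identity $\babs{z-r}^2=\babs{zr-1}^2-(1-\babs{z}^2)(1-r^2)$ treats all of $\D^+$ uniformly, removes the case-checking near the boundary, yields the sharper conclusion $\babs{z-r}\le\babs{zr-1}+3s/\sqrt{n}$ (constant $1$ instead of $2$), and, as you note, never uses $r\ge 1/2$. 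Two small points to tighten: the identity is not literally symmetric in $z$ and $r$ (one variable is complex), so rather than "by symmetry" you should say that in the other mixed case $r\le 1\le\babs{z}$ the same decomposition applies with the roles of the two factors swapped, namely $1-r=(1-\babs{z}r)+r(\babs{z}-1)\le\babs{zr-1}+s/\sqrt{n}$, using $\babs{\,\babs{zr}-1\,}\le\babs{zr-1}$ and $\babs{z}-1\le s/\sqrt{n}$; and the step $r^2-1\le 3s/\sqrt{n}$ tacitly uses $s/\sqrt{n}\le 1$, which is harmless here since the paper already normalizes $s\le 1$.
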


\begin{proof} First assume that $z,r\in \D$. Let $z=x+\imag y$. The inequalities
$$(x-r)^2\le (rx-1)^2,\quad (-1\le x\le 1)\quad \text{and}\quad  y^2\le 4(ry)^2$$ show that
$\babs{z-r}^2\le (rx-1)^2+4(ry)^2\le 4\babs{zr-1}^2.$

It is now straightforward to check the cases when $z$ and/or $r$ are in $\D^+\setminus \D$; we omit details.
\end{proof}

The lemma implies that $\babs{\mathbf{K}_n(z,r)}\le Cn \frac 1 {1+\sqrt{n}\babs{z-r}}$ when $z,r\in \D^+$
are such that $r\ge 0$ and $\babs{zr-1}\le M/\sqrt{n}$.

In the following we can thus assume that $\babs{zr-1}\ge M/\sqrt{n}$ where the constant $M$ is at our disposal.
We make the following observation.

\begin{fact} \label{4l} If $M\ge 8s$, then for all $z,r\in \D^+$ such that $r\ge 1/2$ and $\babs{z-r}\ge M/\sqrt{n}$ we have that
$\babs{z-r}\le 4\babs{zr-1}$.
\end{fact}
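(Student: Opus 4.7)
The plan is to obtain the claim as a direct corollary of Lemma \ref{ble}: once $|z-r|$ is large compared with the scale $s/\sqrt{n}$, the additive error term $4s/\sqrt{n}$ in the bound of that lemma becomes negligible relative to $|z-r|$ itself, and the inequality $|z-r| \le 2|zr-1| + 4s/\sqrt{n}$ can be rearranged into a purely multiplicative bound.

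Concretely, I would proceed as follows. Lemma \ref{ble} applies under exactly the standing hypotheses of the Fact (namely $z,r \in \D^+$ and $r \ge 1/2$), so it yields
\begin{equation*}
|z-r| \;\le\; 2\,|zr-1| \;+\; 4s/\sqrt{n}.
\end{equation*}
By the hypothesis $M \ge 8s$, combined with $|z-r| \ge M/\sqrt{n}$, I have
\begin{equation*}
4s/\sqrt{n} \;\le\; \frac{M}{2\sqrt{n}} \;\le\; \frac{|z-r|}{2}.
\end{equation*}
Inserting this into the bound from Lemma \ref{ble} gives $|z-r| \le 2|zr-1| + |z-r|/2$, and subtracting $|z-r|/2$ from both sides followed by multiplying by $2$ yields $|z-r| \le 4|zr-1|$, as claimed.

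There is no substantive obstacle here; the statement is essentially a repackaging of Lemma \ref{ble} in a form convenient for the subsequent estimates on $\mathbf{K}_n(z,w)$ in the Ginibre case. The numerical constants are not sharp: any factor strictly larger than $2$ on the right-hand side would work provided $M$ is taken correspondingly larger than $s$. The point of the Fact is simply that, in the regime where $|z-r|$ exceeds the natural microscopic scale $s/\sqrt{n}$, one may freely exchange the geometric quantity $|z-r|$ for the conformal quantity $|zr-1|$ (up to a multiplicative constant), the latter being the natural variable appearing in the explicit formula for the Ginibre correlation kernel through $s_n(nz\bar w)$.
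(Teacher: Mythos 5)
Your proof is correct and is essentially identical to the paper's own argument: both apply Lemma \ref{ble} and then absorb the additive term $4s/\sqrt{n}$ into $\babs{z-r}/2$ using the hypotheses $M\ge 8s$ and $\babs{z-r}\ge M/\sqrt{n}$. Nothing further is needed.
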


\begin{proof} The hypothesis gives that $4s/\sqrt{n}\le \babs{z-r}/2$. Hence Lemma \ref{ble} shows that
$\babs{z-r}\le 2\babs{zr-1}+\babs{z-r}/2.$
\end{proof}

\begin{lem}\label{bmlem0} There are constants $\delta>0$ and $C$ such that for all $z,w\in D(0;2)$ such that $M/\sqrt{n}\le \babs{z\bar{w}-1}\le \delta$, we have
$\babs{\mathbf{K}_n(z,w)}\le Cn\lpar \e^{-n\babs{z-w}^2/2}+\frac 1 {1+\sqrt{n}\babs{z\bar{w}-1}}\rpar.$
\end{lem}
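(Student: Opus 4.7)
The plan is to exploit the explicit Ginibre formula $\mathbf{K}_n(z,w) = n s_n(n\zeta) e^{-n(|z|^2+|w|^2)/2}$ with $\zeta=z\bar w$, and rewrite
\[
\babs{\mathbf{K}_n(z,w)} \;=\; n\babs{s_n(n\zeta)e^{-n\zeta}}\cdot e^{-n|z-w|^2/2}
\]
via the elementary identity $e^{n\re\zeta-n(|z|^2+|w|^2)/2}=e^{-n|z-w|^2/2}$. The task thus reduces to estimating $|s_n(n\zeta)e^{-n\zeta}|\cdot e^{-n|z-w|^2/2}$ uniformly in the annulus $M/\sqrt n\le|\zeta-1|\le\delta$.

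The key input is the classical Szeg\H{o} asymptotic for the truncated exponential. Introducing the conformal change of variable $\eta(\zeta)$ defined near $\zeta=1$ by $\tfrac12\eta^2 = \zeta-1-\log\zeta$, with $\eta(1)=0$ and $\eta'(1)=1$ (so that $|\eta(\zeta)|\asymp|\zeta-1|$ on a small disk around $1$), I would apply the saddle-point method to the integral representation
\[
r_n(n\zeta)e^{-n\zeta} \;=\; \int_0^{n\zeta}\frac{t^{n-1}e^{-t}}{(n-1)!}\,dt
\]
(which follows from $\tfrac{d}{dx}(e^{-x}r_n(x)) = x^{n-1}e^{-x}/(n-1)!$). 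The substitution $t=n+u\sqrt n$, combined with Stirling and a deformation of the contour through the saddle along the steepest-descent path determined by $\eta$, produces
\[
s_n(n\zeta)e^{-n\zeta} \;=\; \Phi\!\bigl(-\sqrt n\,\eta(\zeta)\bigr) + O(n^{-1/2})
\]
uniformly for $|\zeta-1|\le\delta$ (with $\delta$ small enough to keep the deformation in the domain of validity). Here $\Phi$ is the analytic continuation of the standard normal distribution function, so $\Phi(-s\sqrt 2)=\tfrac12\erfc(s)$.

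Two elementary facts then close the argument. First, for $s$ in the sector $|\arg s|<3\pi/4$ one has the uniform bound $|\erfc(s)|\le Ce^{-\re(s^2)}/(1+|s|)$ coming from the standard large-argument asymptotic, while outside this sector $|\erfc(s)|\le 2$ trivially. Second, a direct computation using $\re(\eta^2)=2(\re\zeta-1-\log|\zeta|)$ together with the AM--GM inequality $(|z|^2+|w|^2)/2\ge|z||w|=|\zeta|$ gives
\[
\tfrac{n}{2}|z-w|^2 + \tfrac{n}{2}\re(\eta^2) \;=\; n\!\left(\tfrac{|z|^2+|w|^2}{2}-1-\log|\zeta|\right) \;\ge\; n\bigl(|\zeta|-1-\log|\zeta|\bigr) \;\ge\; 0,
\]
since $\psi(x)=x-1-\log x\ge 0$. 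Setting $s=\sqrt{n/2}\,\eta$, in the "good" sector the combination
\[
\babs{\Phi(-\sqrt n\,\eta)}\cdot e^{-n|z-w|^2/2} \;\le\; \frac{C\,e^{-n\re(\eta^2)/2-n|z-w|^2/2}}{1+\sqrt n\,|\eta|} \;\le\; \frac{C}{1+\sqrt n\,|\zeta-1|},
\]
while in the "bad" sector (which forces $\zeta$ essentially to the real segment $\zeta<1$) the crude bound $|\Phi|\le 1$ gives $\le e^{-n|z-w|^2/2}$; the $O(n^{-1/2})$ Szeg\H{o} error is absorbed the same way. Combining yields the claimed estimate. The main obstacle is the uniformity of the saddle-point expansion across the annulus $M/\sqrt n\le|\zeta-1|\le\delta$; the lower bound $|\zeta-1|\ge M/\sqrt n$ with $M$ large is precisely what permits the Szeg\H{o} remainder to be absorbed into the $1/(1+\sqrt n|\zeta-1|)$ term, while $\delta$ must be small enough that $\sqrt{n/2}\,\eta(\zeta)$ either lies in the good sector or lies near the negative real axis where $|\Phi|$ is trivially bounded.
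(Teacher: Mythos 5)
Your outline follows the paper's own route: the paper also reduces to estimating $s_n(n\zeta)\e^{-n\zeta}$ in the annulus $M/\sqrt n\le|\zeta-1|\le\delta$ via the conformal variable $\xi=\sqrt{\zeta-1-\log\zeta}$ (your $\eta=\sqrt2\,\xi$), quotes the two-sector uniform $\erfc$-asymptotics from Bleher--Mallison (Theorem B.1) instead of re-deriving a Temme-type expansion from the incomplete gamma representation, then uses the large-argument $\erfc$ bound in the good sector, the trivial bound $\erfc(z)+\erfc(-z)=2$ near the cut, and finally absorbs the exponential factors through $x^n\e^{-n(x-1)}\le1$ with $x=|zw|$ --- which is exactly your inequality $\tfrac n2|z-w|^2+\tfrac n2\re(\eta^2)=n\bigl(\tfrac{|z|^2+|w|^2}2-1-\log|z\bar w|\bigr)\ge0$. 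So the decomposition, the key asymptotic input, and the absorption mechanism all coincide; the only structural difference is citation versus saddle-point sketch.

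One intermediate claim is misstated and you should repair it: the expansion $s_n(n\zeta)\e^{-n\zeta}=\Phi(-\sqrt n\,\eta(\zeta))+O(n^{-1/2})$ with an \emph{additive} $O(n^{-1/2})$ error, uniformly on $|\zeta-1|\le\delta$, is false for complex $\zeta$. For instance, for $\zeta=\e^{\imag\theta}$ with $\theta\asymp n^{-1/4}$ one has $\re(\eta^2)/2=\cos\theta-1<0$, and both $s_n(n\zeta)\e^{-n\zeta}$ and $\Phi(-\sqrt n\,\eta)$ have modulus of order $\e^{n(1-\cos\theta)}\to\infty$, so the discrepancy cannot be $O(n^{-1/2})$. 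The correct uniform statement (Temme, or the two-sector form quoted in the paper) carries a \emph{relative} error, i.e.\ the additive error is $O\bigl(n^{-1/2}\e^{-n\re(\eta^2)/2}\bigr)$. This does not derail your argument: multiplying the corrected error by $n\e^{-n|z-w|^2/2}$ and invoking your own identity $\tfrac n2|z-w|^2+\tfrac n2\re(\eta^2)\ge0$ gives a contribution $O(\sqrt n)\le Cn/(1+\sqrt n\,|\zeta-1|)$ on the annulus, so it is absorbed exactly like the main term; but as written, the error bound you assert is not the one the saddle-point analysis produces, and the Gaussian factor must be kept to make the absorption step honest.
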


\begin{proof}
Let $\phi(\zeta)=\zeta-1-\log\zeta$ for $\zeta$ close to $1$, with the principal branch of the logarithm. It was observed in \cite{BM}, Appendix B, that
 $\phi$ has an analytic square-root $\xi=\sqrt{\phi}$, which is moreover conformal in a neighbourhood of $1$. We
 fix $\xi(z)$ by requiring it to be negative for real $z\in (0,1)$.

We will now apply \cite{BM}, Theorem B.1, which yields that there exists $\delta>0$ such that for any $M>1$ and all
$\zeta$ with $M/\sqrt{n}\le \babs{\zeta-1}\le \delta$, we have the following, partially overlapping, asymptotic expansions:
\begin{equation*}s_n(n\zeta)\e^{-n\zeta}=\begin{cases}1-\frac 1 {2\sqrt{2}\xi^\prime(\zeta)}\erfc(-\sqrt{n}\xi(\zeta))(1+O(1/\sqrt{n})),&\babs{\arg(\zeta-1)-\pi}\le 2\pi/3,\cr
\frac 1 {2\sqrt{2}\xi^\prime(\zeta)}\erfc(\sqrt{n}\xi(\zeta))(1+O(1/\sqrt{n})),&\babs{\arg(\zeta-1)}\le 2\pi/3.\cr
\end{cases}
\end{equation*}
Applying the asymptotic expansion (found in \cite{BM}, eq. (B.36))
 \begin{equation*}\erfc(-z)=-\frac {\e^{-z^2}}{\sqrt{\pi}z}(1+O(1/z^2))\quad \text{as}\quad z\to\infty,\quad \babs{\arg(z-1)-\pi}<3\pi/4-\eps,\end{equation*}
and the fact that $\erfc(-z)+\erfc(z)=2$, we obtain in both cases
$$\babs{s_n(nz\bar{w})\e^{-nz\bar{w}}}\le C\lpar 1+ \frac {\babs{zw}^{n+1}\e^{-n\re(z\bar{w})+n}}
{\sqrt{n}\babs{z\bar{w}-1}}\rpar,\quad M/\sqrt{n}\le \babs{z\bar{w}-1}\le \delta.$$
We have shown that
$$\babs{\mathbf{K}_n(z,w)}=n\babs{s_n(nz\bar{w})\e^{-nz\bar{w}}}\babs{\e^{nz\bar{w}-n\frac {\babs{z}^2+\babs{w}^2}
2}}\le Cn\lpar \e^{-n\babs{z-w}^2/2}+\frac {\babs{zw}} {\sqrt{n}\babs{z\bar{w}-1}} \babs{zw}^n \e^{-n\lpar \frac {\babs{z}^2
+\babs{w}^2} 2-1\rpar}\rpar.$$
Using that $x^n\e^{-n(x-1)}\le 1$ for $x>0$ we find that
$$\babs{\mathbf{K}_n(z,w)}\le Cn\lpar \e^{-n\babs{z-w}^2/2}+ \frac 1 {\sqrt{n}\babs{z\bar{w}-1}}\rpar,\quad M/\sqrt{n}\le \babs{z\bar{w}-1}\le \delta.$$
This finishes the proof of the lemma.
\end{proof}

If $z,r\in \D^+$ and $M/\sqrt{n}\le \babs{zr-1}\le \delta$, then $\babs{z},r\ge 1/2$ provided that $\delta$ is sufficiently small.

Hence Lemma \ref{bmlem0} and Fact \ref{4l} show that, provided $M\ge 8s$, we have the estimate
$$\babs{\mathbf{K}_n(z,r)}\le Cn\lpar \e^{-n\babs{z-r}^2/2}+ \frac 1 {\sqrt{n}\babs{z-r}/4}\rpar.$$
Since $x\e^{-x^2/2}< 1$, it yields that $\babs{\mathbf{K}_n(z,r)}\precsim \sqrt{n}/\babs{z-r}$.

There now only remains to handle the case when $\babs{zr-1}\ge \delta$. To this end, we shall use the following lemma.

\begin{lem} \label{zock10} If $z,w\in D(0;2)$ satisfy $\babs{1-z\bar{w}}\ge \delta>0$  then
$\babs{\mathbf{K}_n(z,w)}\le C\sqrt{n}\lpar 1+ \frac 1 {\babs{z-w}}\rpar$
where $C$ depends only on $\delta$.
\end{lem}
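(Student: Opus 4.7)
The plan is to reduce the statement to a uniform bound on the modulus of the partial exponential sum. Starting from the identity $\mathbf{K}_n(z,w) = n\,s_n(nz\bar w)\,\e^{-n(\babs{z}^2+\babs{w}^2)/2}$, and using $\re(z\bar w) - (\babs{z}^2+\babs{w}^2)/2 = -\babs{z-w}^2/2$, one rewrites
\[
\babs{\mathbf{K}_n(z,w)} = n\,\babs{s_n(n\zeta)\e^{-n\zeta}}\,\e^{-n\babs{z-w}^2/2}, \qquad \zeta := z\bar w,
\]
so that the task reduces to controlling $\babs{s_n(n\zeta)\e^{-n\zeta}}$ uniformly on the compact set $K_\delta := \{\babs{\zeta}\le 4,\,\babs{\zeta-1}\ge \delta\}$.

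The central ingredient is the classical Szegő--Plancherel asymptotics on $\C\setminus\{1\}$, in the precise form already quoted in the proof of Lemma \ref{bmlem0} (see \cite{BM}, Theorem B.1). Gluing the two overlapping sectors $\{\babs{\arg(\zeta-1)-\pi}\le 2\pi/3\}$ and $\{\babs{\arg(\zeta-1)}\le 2\pi/3\}$, combined with the elementary bound $\babs{\erfc(-z)}\le 2$ and the large-argument asymptotic $\erfc(\pm z)\sim \pm \e^{-z^2}/(\sqrt\pi z)$, yields a uniform estimate
\[
\babs{s_n(n\zeta)\e^{-n\zeta}} \le C + \frac{C}{\sqrt n\,\babs{1-\zeta}}\,\babs{\zeta \e^{1-\zeta}}^n, \qquad \zeta \in K_\delta,
\]
with a constant $C$ depending only on $\delta$. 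This is the main technical point. The simplification compared to Lemma \ref{bmlem0} is that the standing assumption $\babs{\zeta-1}\ge\delta$ makes the prefactor $1/\babs{1-\zeta}$ bounded by $1/\delta$ and removes any need to analyze the transition behaviour on the Szegő curve near $\zeta=1$.

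Substituting this bound back gives
\[
\babs{\mathbf{K}_n(z,w)} \le Cn\,\e^{-n\babs{z-w}^2/2} + \frac{C\sqrt n}{\delta}\,\babs{z\bar w}^n\,\e^{n(1-(\babs{z}^2+\babs{w}^2)/2)}.
\]
The coefficient $\babs{z\bar w}^n\,\e^{n(1-(\babs{z}^2+\babs{w}^2)/2)}$ in the second summand is at most $1$: by AM--GM, $\babs{z}\babs{w}\le u$ where $u:=(\babs{z}^2+\babs{w}^2)/2$, and the function $u\mapsto u\,\e^{1-u}$ attains its maximum value $1$ at $u=1$, so $(u\,\e^{1-u})^n\le 1$. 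The first summand is controlled by the elementary inequality $nt\,\e^{-nt^2/2}\le \sqrt{n/\e}$ applied with $t=\babs{z-w}$, giving $n\,\e^{-n\babs{z-w}^2/2}\le \sqrt{n/\e}/\babs{z-w}$. Adding the two contributions yields the desired bound $\babs{\mathbf{K}_n(z,w)} \le C\sqrt n(1+1/\babs{z-w})$. The only nontrivial step in the plan is extracting the Szegő-type estimate displayed above; everything else is elementary manipulation.
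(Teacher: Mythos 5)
Your overall scheme is exactly the one the paper follows: write $\babs{\mathbf{K}_n(z,w)}=n\babs{s_n(n\zeta)\e^{-n\zeta}}\e^{-n\babs{z-w}^2/2}$ with $\zeta=z\bar w$, invoke a Szeg\H{o}-type bound of the form $\babs{s_n(n\zeta)\e^{-n\zeta}}\le C+C\babs{\zeta\e^{1-\zeta}}^n/(\sqrt n\,\babs{1-\zeta})$, absorb the exponential via $\babs{z\bar w}^n\e^{n(1-(\babs{z}^2+\babs{w}^2)/2)}\le (u\e^{1-u})^n\le 1$, and finish with $n\e^{-nt^2/2}\le\sqrt{n/\e}\,/t$; these elementary steps are all correct and are what the paper does (partly implicitly). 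The genuine gap is in the justification of what you yourself call the only nontrivial step. The expansion you cite --- the one quoted in the proof of Lemma \ref{bmlem0}, i.e.\ \cite{BM}, Theorem B.1 --- is valid only in the regime $M/\sqrt n\le\babs{\zeta-1}\le\delta_0$ for a \emph{small} $\delta_0$ fixed by that theorem: the map $\xi=\sqrt{\phi}$ appearing in $\erfc(\pm\sqrt n\,\xi(\zeta))$ is defined and conformal only in a neighbourhood of $\zeta=1$, so the two sectors you propose to glue cover all directions but not all radii. In the present lemma $\zeta=z\bar w$ ranges over the \emph{complementary} region $\{\babs{\zeta}\le 4,\ \babs{\zeta-1}\ge\delta\}$, which contains points far from $1$ where that expansion is not even meaningful; hence your displayed uniform estimate on $K_\delta$ does not follow from the result you invoke.

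The repair is precisely the reference the paper uses: Szeg\H{o}'s classical asymptotics (\cite{Sz}, Hilfssatz 1), which hold uniformly on compact subsets of $\C\setminus\{1\}$ and assert that on two overlapping open sets covering $\C\setminus\{1\}$ one has either $s_n(n\zeta)\e^{-n\zeta}=\frac 1{\sqrt{2\pi n}}(\zeta\e^{1-\zeta})^n\frac{\zeta}{1-\zeta}(1+o(1))$ or $1$ plus that same term. Since $\babs{\zeta}\le 4$ and $\babs{1-\zeta}\ge\delta$, the prefactor $\babs{\zeta/(1-\zeta)}$ is at most $4/\delta$, and this yields exactly your displayed bound with a constant depending only on $\delta$; with that substitution the remainder of your argument goes through verbatim and coincides with the paper's proof.
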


\begin{proof} We shall use some classical asymptotic estimates due to Szeg\H{o}.
Namely, by \cite{Sz}, Hilfssatz 1, it holds that for all $\zeta\in \C$ with $\babs{\zeta-1}\ge \delta$ we have two
partially overlapping possibilities. Viz. there are open sets $\Omega_1,\Omega_2$ with $\Omega_1\cup\Omega_2=\C\setminus \{1\}$ such that
\begin{equation*}s_n(n\zeta)\e^{-n\zeta}=\begin{cases}\frac 1 {\sqrt{2\pi n}}(\zeta\e^{1-\zeta})^n\frac \zeta {1-\zeta}(1+\eps_n^{(1)}(\zeta)),&
\zeta\in \Omega_1,\cr
1+ \frac 1 {\sqrt{2\pi n}}(\zeta\e^{1-\zeta})^n\frac \zeta {1-\zeta}(1+\eps_n^{(2)}(\zeta)),& \zeta\in\Omega_2.\cr
\end{cases}\end{equation*}
where $\eps_n^{(j)}(\zeta)$ denotes a quantity converging to zero, uniformly on compact subsets of $\Omega_j$.

These relations imply
$$\babs{\mathbf{K}_n(z,w)}\le n\lpar\e^{-n\babs{z-w}^2/2}+C \frac 1 {\sqrt{n}\babs{z\bar{w}-1}}\rpar\le n\lpar\e^{-n\babs{z-w}^2/2}+C\delta^{-1} \frac 1 {\sqrt{n}}\rpar,$$
for $(z,w)$ in a compact subset of $\C^2\setminus \{(z,w);~\babs{z\bar{w}-1}\le \delta\}$. The lemma follows.
\end{proof}

The lemma shows that $\babs{\mathbf{K}_n(z,r)}\le C\delta^{-1}\sqrt{n}$ when $z,r\in\D^+$ satisfy $\babs{1-zr}\ge \delta$ and $r\ge0$.
Thereby,
Proposition \ref{zick} is completely proved. $\qed$

\subsection{Proof of Lemma \ref{h12}} \label{blala} Let $\mathcal{F}=\{\mathcal{F}_n\}$ be a family of Fekete sets
corresponding to $Q=\babs{z}^2$. Also fix a small $\eps>0$. We will prove that $\mathcal{F}$ is of class
$M_{\overline{\D},1-3\eps}$ and that $\mathcal{F}$ is $(1+3\eps)$-interpolating.

To this end, write $\mathcal{F}_n=\{z_1,\ldots,z_n\}$ and introduce the auxiliary functions
$$\ell_j(z)=\lpar \prod_{i\ne j}(z-z_i)/\prod_{i\ne j}(z_j-z_i)\rpar \e^{-n(Q(z)-Q(z_j))/2},$$
as well as
$$L_j(z)=\lpar \frac {\mathbf{K}_{\eps n}(z,z_j)} {\mathbf{K}_{\eps n}(z_j,z_j)}\rpar^3 \ell_j(z)\quad,
\quad \tilde{L}_j(z)=\lpar \frac {\mathbf{K}_{\eps n}(z_j,z)} {\mathbf{K}_{\eps n}(z,z)}\rpar^3 \ell_j(z).$$
By general results, we have that $\mathcal{F}\subset \overline{\D}$, $\mathcal{F}$ is uniformly separated,
and $\babs{\ell_j}\le 1$ on $\C$ (see Theorem \ref{eqcon} and Lemma \ref{jaa}). We also recall:

(i) To prove that $\mathcal{F}$ is $(1+3\eps)$-interpolating it suffices to show that the operator $T:(c_j)_{j=1}^n\mapsto
\sum_{j=1}^n c_j L_j(z)$ is bounded from $\ell_2^n$ to $L^2$, of norm at most $C/\sqrt{n}$. (Consider the weighted polynomial
$f=T(c)\in \tilde{H}_{n(1+3\eps)}$.)

(ii) Similarly,
to prove that $\mathcal{F}$ is of class $M_{\overline{\D},1-3\eps}$ it suffices to show that the operator
$\tilde{T}:(c_j)_{j=1}^n\mapsto
\sum_{j=1}^n c_j \tilde{L}_j(z)$ is bounded from $\ell_2^n$ to $L^2(\D^+)$, of norm at most $C/\sqrt{n}$. This follows from the representation $f=\tilde{T}(c)$, $f\in\tilde{H}_{n(1-3\eps)}$, where $c_j=f(z_j)$. (See \textsection \ref{polish}.)

Next observe that since $\mathbf{K}_{n\eps}(z,z)\ge cn\eps$ for $z\in \D^+$ (Remark \ref{lowb}), and since $\babs{\mathbf{K}_n(z_j,z)}\le Cn$, we have
$$\babs{L_j(z)}\le \frac C {n^2\eps^3} {\babs{\mathbf{K}_{n\eps}(z_j,z)}^2},\quad z\in \C$$
and
$$\babs{\tilde{L}_j(z)}\le \frac C {n^2\eps^3} {\babs{\mathbf{K}_{n\eps}(z_j,z)}^2},\quad z\in \D^+.$$
Since
$$\int_\C \babs{\mathbf{K}_{n\eps}(z_j,z)}^2\dA(z)=\mathbf{K}_{\eps n}(z_j,z_j)\le n\eps,$$
we conclude that
$\|L_j\|_{L^1}\le C/(n\eps^2)$ and $\|\tilde{L}_j\|_{L^1}\le C/(n\eps^2)$. We have shown that
$\|T\|_{\ell^1_n\to L^1}\le C/(n\eps^2)$ and $\|\tilde{T}\|_{\ell^1_n\to L^1(\D^+)}\le C/(n\eps^2)$.

Next notice that by Proposition \ref{zick} and Remark \ref{lowb} we have
$$\babs{L_j(z)}\le CV_j(z)\quad \text{and} \quad \babs{\tilde{L}_j(z)}\le CV_j(z),\quad z\in \D^+,$$
where
$$V_j(z)= \frac 1 {(1+\sqrt{n\eps}\babs{z-z_j})^3}.$$
Let us introduce the function
$$F_n(z)=\sum_{j=1}^n V_j(z).$$

Since
$$V_j(z)\le Cns^{-2}\int_{D(z_j,s/\sqrt{n})}
V_j(w)\dA(w),\quad z\in \D^+,$$
with a constant $C$ depending only on $s$ and $\eps$,
the separation implies that
$$F_n(z)\le Cns^{-2}\int_{\D^+}\frac 1 {(1+\sqrt{n\eps}\babs{z-w})^3}\dA(w)\le C\eps^{-1}s^{-2}\int_{\C}\frac 1
{(1+\babs{\zeta})^3}\dA(\zeta)<\infty,\quad z\in \D^+.$$
This implies $\|F_n\|_{L^\infty(\D^+)}\le C\eps^{-1}s^{-2}$, which gives $\|T\|_{\ell^\infty_n\to L^{\infty}(\D^+)}\le
C\eps^{-1}s^{-2}$ and $\|\tilde{T}\|_{\ell^\infty_n\to L^{\infty}(\D^+)}\le
C\eps^{-1}s^{-2}$.

We now recall that the bound $\|T\|_{\ell^\infty_n\to L^{\infty}(\D)}\le C$ means that, for all $(c_j)_1^n$,
$$\left\|\sum_{j=1}^n c_jL_j\right\|_{L^\infty(\D)}\le C\max \babs{c_j}.$$
But by the maximum principle (Lemma \ref{BAHM}), the weighted polynomial $\sum c_jL_j(z)$ assumes its maximum on $\overline{\D}$, which means that
$$\left\|\sum_{j=1}^n c_jL_j\right\|_{L^\infty}\le C\max\babs{c_j}.$$
We have shown that $\|T\|_{\ell^\infty_n\to L^{\infty}}\le C\eps^{-1}s^{-2}$.

By interpolation we now infer that $\|T\|_{\ell^2_n\to L^2}\le C/\sqrt{n}$ and $\|\tilde{T}\|_{\ell^2_n\to L^2(\D^+)}\le C/\sqrt{n}$ with a constant depending on $\eps$ and $s$. The proof of the lemma is finished. q.e.d.

\section*{Appendix: The proof of Lemma \ref{l1}}
Let $Q$ be $C^3$-smooth in some neighbourhood of $S$;
we assume that $\Delta Q\ge \text{const.}>0$ there.
To prove Lemma \ref{l1}, we shall use a simplified form of the argument used in the appendix of \cite{AHM3}.

To simplify the discussion
we put $\rho=1$; for the general case one needs simply to replace "$n$'' by "$n\rho$''.

It will be useful to keep in mind the following elementary properties of the equilibrium potential $\wh{Q}$: (i) $\wh{Q}$ is $C^1$-smooth on $\C$
and the gradient of $\wh{Q}$ is Lipschitz continuous on $\C$, (ii) $\wh{Q}$ is harmonic on $\C\setminus S$, (iii) one has that
\begin{equation}\label{app0}\wh{Q}(\zeta)=\log\babs{\zeta}^2+O(1)\quad \text{as}\quad z\to\infty.\end{equation}
For proofs of these statements, we refer to \cite{ST}, Theorem I.4.7 and \cite{HM2}.

Fix a point $z\in S$ with $\dist(z,\d S)\ge 3\delta_n$.
We can here take
$\delta_n=M\sqrt{\log n/n}$ for some large $M$, but
any fixed positive sequence with $n\delta_n^3\to 0$ and $\liminf_{n\to\infty}n\delta_n^2/\log n$ large enough
will also work.
Recall that
$$\psi(z,\zeta)=Q(z)+\d Q(z)\cdot(\zeta-z)+\frac 1 2
\d^2Q(z)\cdot (\zeta-z)^2.$$
Put
$$k_z^\#(\zeta)=n\Delta Q(z)\e^{n\psi(z,\bar{\zeta})}.$$
Observe that, by Taylor's formula,
\begin{equation}\label{app1}
\overline{k_z^\#(\zeta)}\e^{-nQ(\zeta)}=n\Delta Q(z)
\e^{-nH_z(\zeta)-n\Delta Q(z)\babs{z-\zeta}^2}\e^{n\eps_z(\zeta)},\end{equation}
where
$$H_z(\zeta)=\d Q(z)(\zeta-z)+\frac 1 2 \d^2 Q(z)(\zeta-z)^2$$
and $\eps_z(\zeta)=O(\babs{z-\zeta}^3)$.

Let $\chi_z=\chi_{z,n}$ be a sequence of
cut-off functions with $\chi_z=1$ in $D(z;3\delta_n/2)$
and $\chi_z=0$ outside $D(z;2\delta_n)$, and also
$\|\dbar \chi_z\|_{L^2}\le C$. In the following, we write
$\|f\|_{nQ}^2=\int_\C\babs{f}^2\e^{-nQ}\dA$.

Let $f$ be holomorphic in $D(z;2\delta_n)$. Observe that (since $\e^x=1+O(x)$ as $x\to 0$)
$$\langle \chi_z\cdot f,k_z^\#\rangle_{nQ}=\lpar 1+O(n\delta_n^3)\rpar n\Delta Q(z)\int \chi_z\cdot F_n
\e^{-n\Delta Q(z)\babs{\zeta-z}^2}\dA(\zeta),$$
where we have put
$$F_n=f\cdot \e^{-nH_z}.$$

It yields
$$\langle \chi_z\cdot f,k_z^\#\rangle_{nQ}=
-(1+O(n\delta_n^3))\int
\frac {\chi_z(\zeta)\cdot F_n(\zeta)}
{\zeta-z}\cdot
\dbar_\zeta\lpar\e^{-n\Delta Q(z)\babs{\zeta-z}^2}\rpar
\dA(\zeta),$$
or by Cauchy's formula,
$$\langle \chi_z\cdot f,k_z^\#\rangle_{nQ}=
(1+O(n\delta_n^3))\lpar F_n(z)+
\int_{\babs{\zeta-z}\ge \delta_n}
\frac {\dbar\chi_z(\zeta)\cdot F_n(\zeta)}
{\zeta-z}\e^{-n\Delta Q(z)\babs{\zeta-z}^2}\dA(\zeta)
\rpar.$$
Since $F_n(z)=f(z)$, it yields
(using Cauchy--Schwarz)
$$\babs{f(z)-\langle \chi_z\cdot f,k_z^\#\rangle_{nQ}}
\le Cn\delta_n^3\babs{f(z)}
+\delta_n^{-1}\e^{-n\delta_n^2}
\left\|\dbar\chi_z\right\|_{L^2}\cdot
\lpar \int_{D(z;2\delta_n)}
\babs{F_n(\zeta)}^2\e^{-n\Delta Q(z)
\babs{z-\zeta}^2}\dA(\zeta)\rpar^{1/2}.
$$

But in view of Lemma \ref{subh}, we have an estimate
$$\babs{f(z)}\precsim
\sqrt{n}\left\|f\right\|_{nQ}\e^{nQ(z)/2}.$$
Observing that $\babs{F_n(\zeta)}^2\e^{-n\Delta Q(z)\babs{z-\zeta}^2}\precsim
\babs{f(\zeta)}^2\e^{-nQ(\zeta)}\e^{nQ(z)}$
when $\zeta\in D(z;2\delta_n)$, we conclude
\begin{equation}\label{app2}
\babs{f(z)-\langle \chi_z\cdot f,k_z^\#\rangle_{nQ}}\le
Cn^{3/2}\delta_n^3\|f\|_{nQ}\e^{nQ(z)/2}.\end{equation}

 Let $k_w(z)=K_n(z,w)$ be the reproducing kernel for the subspace $H_n$ of $L^2_{nQ}=L^2(\e^{-nQ},\dA)$ consisting of all analytic polynomials of degree $\le n-1$; let $P_n f(z)=\langle f,k_z\rangle_{nQ}$ and
$P_n^\# f(z)=\langle f,k_z^\#\rangle_{nQ}$.
Then
$$P_n^\#\left[\chi_z\cdot k_w\right](z)=\langle
\chi_z\cdot k_w,k_z^\#\rangle_{nQ}=\overline{\langle
\chi_zk_z^\#,k_w\rangle_{nQ}}=\overline{
P_n\left[\chi_z\cdot k_z^\#\right](w)}.$$
This gives (since $k_z(w)=\overline{k_w(z)}$)
$$\babs{k_z(w)-P_n\left[\chi_z\cdot k_z^\#\right](w)}
=\babs{k_w(z)-P_n^\#\left[\chi_z\cdot k_w\right](z)},$$
and, since $\|k_w\|_{nQ}\precsim \sqrt{n}\e^{nQ(w)/2}$,
we get from \eqref{app2} that
\begin{equation}\label{app3}\babs{k_z(w)-P_n\left[\chi_z\cdot k_z^\#\right](w)}
\le Cn^2\delta_n^3\e^{nQ(z)/2}\e^{nQ(w)/2}.\end{equation}

To finish the proof of Lemma \ref{l1}, fix $z\in S_n$ and $w\in D(z;\delta_n)$ and let
$$u(\zeta)=\chi_z(\zeta)k_w^\#(\zeta)-P_n\left[\chi_z
k_w^\#\right](\zeta).$$
By a well known version of H\"{o}rmander's estimate for the
$L^2$-minimal solution to the $\dbar$-equation (see \cite{B} or \cite{A}, \textsection 5.2) we have a pointwise estimate
$$\babs{u(z)}\le Cn\e^{-cn\delta_n^2}\e^{n(Q(z)+Q(w))/2},$$
where $C$ and $c$ are positive constants.

Since $n\e^{-cn\delta_n^2}=o(1)$, we obtain
$$\babs{K_n(z,w)-k_w^\#(z)}\e^{-n(Q(z)+Q(w))/2}\le
Cn^2\delta_n^3.$$
To finish the proof of Lemma \ref{l1}, it now suffices to
recall that
$$\mathbf{K}_n(z,w)=K_n(z,w)\e^{-n(Q(z)+Q(w))/2}
\quad \text{and}\quad
\mathbf{K}_n^\#(z,w)=k_z^\#(w)\e^{-n(Q(z)+Q(w))/2}.$$

\end{document}